\theoremstyle{plain}
\newtheorem{thm}{Theorem}[section]
\newtheorem{cor}[thm]{Corollary}
\newtheorem{lem}[thm]{Lemma}
\newtheorem{exam}[thm]{Example}
\newtheorem{rem}[thm]{Remark}
\newtheorem{defi}[thm]{Definition}
\def\sqr#1#2{{\vcenter{\vbox{\hrule height.#2pt
              \hbox{\vrule width.#2pt height#1pt \kern#1pt \vrule
width.#2pt}
              \hrule height.#2pt}}}}
\def\be{\begin{equation}}
\def\ee{\end{equation}}
\def\ep{{\epsilon}}
\def\Sp{{\mathrm {Sp}}}
\def\ind{\text{ind}}
\def\D{\Delta}
\def\no{\noindent}
\def\bs{\bigskip}
\def\q{\quad}
\def\dim{\hbox{\rm dim$\,$}}
\def\sgn{\hbox{\rm sgn$\,$}}
\def\({\Big (}
\def\){\Big )}
\def\[{\Big[}
\def\]{\Big]}
\def\be{\begin{equation}}
\def\bel{\begin{equation}\label}
\def\ee{\end{equation}}
\def\bea{\begin{eqnarray}}
\def\eea{\end{eqnarray}}
\def\bt{\begin{theorem}}
\def\et{\end{theorem}}
\def\bc{\begin{corollary}}
\def\ec{\end{corollary}}
\def\bl{\begin{lemma}}
\def\el{\end{lemma}}
\def\bp{\begin{proposition}}
\def\ep{\end{proposition}}
\def\br{\begin{remark}}
\def\er{\end{remark}}
\def\ba{\begin{array}}
\def\ea{\end{array}}
\def\bd{\begin{definition}}
\def\ed{\end{definition}}
\DeclareMathOperator{\image}{Im}
\begin{document}

\title{\bf  Morse Index Theorem of Lagrangian Systems and Stability of Brake Orbit}
\author
{Xijun Hu\thanks{Partially supported
by NSFC(No.11425105, 11790271), E-mail:xjhu@sdu.edu.cn }  \quad Li Wu \thanks{
 E-mail: 201790000005@sdu.edu.cn} \quad Ran Yang \thanks{
 E-mail: yangran201311260@mail.sdu.edu.cn }    \\ \\
 Department of Mathematics, Shandong University\\
Jinan, Shandong 250100, The People's Republic of China\\
}

\date{}
\maketitle
\begin{abstract}
In this paper, we prove  Morse index theorem of Lagrangian system with  any self-adjoint  boundary conditions.   Based on it, we give some nontrivial estimation on the  difference of Morse indices.    As an application,   we get a new criterion for  the stability problem of  brake periodic orbit.

\end{abstract}

\bs

\no{\bf AMS Subject Classification:}  37B30, 53D12, 34B24, 37J25

\bs

\no{\bf Key Words}. Morse index, Maslov index,  Lagrangian systems,  self-adjoint boundary conditions,  stability of brake orbits

\section{Introduction}

 In this paper, we consider the Morse index theorem  of a Lagrangian system with general boundary conditions.  Precisely, let
  \begin{equation}
\mathcal{F}(x)=\int_{0}^{T}L(t,x,\dot{x})dt, \label{eq:L-S}
\end{equation}
where $x\in W^{1,2}([0,T],\mathbf{R}^{n})$ and $L\in C^{2}([0,T]\oplus \mathbf{R}^{2n},\mathbf{R})$ satisfying the Legendre convexity condition:
 \begin{equation}
 (D^{2}_{vv}L(t,u,v)w,w)>0  \quad  \text{for}  \quad  t\in [0,T], \ w\in \mathbf{R}^{n}\setminus\{0\},\ (u,v)\in \mathbf{R}^{n}\oplus \mathbf{R}^{n}. \nonumber
 \end{equation}
A solution $x$ of the corresponding Euler-Lagrange equation
\begin{equation}\label{eq:E-L equation}
\frac{d}{dt}\frac{\partial L}{\partial \dot{x}}(t,x,\dot{x})-\frac{\partial L}{\partial x}(t,x,\dot{x})=0
\end{equation}
will be called a stationary point. Linearization of \eqref{eq:E-L equation} along its stationary point is given by the following Sturm-Liouville system
\begin{equation}\label{eq:S-L equation}
 \mathcal{A}x(t):=-\frac{d}{dt}(P(t)\dot{x}(t)+Q(t)x(t))+Q(t)^{T}\dot{x}(t)+R(t)x(t)=0,
  \end{equation}
 where $P(t)=\frac{\partial^2 L}{\partial\dot{x}^2},Q(t)=\frac{\partial^2 L}{\partial x\partial\dot{x}}$ and $R(t)=\frac{\partial^2 L}{\partial x^2}$.  In fact, $\mathcal{A}$ can be considered as $\mathcal{F}''(x)$ which is the Hessian of $\mathcal{F}$ at $x$.

The boundary condition of \eqref{eq:E-L equation} can be given in the following way.
 Let $\langle\cdot,\cdot\rangle$ be the standard Hermitian inner product in $\mathbf{C}^{2n}$ and $J_n=\begin{bmatrix} 0&-I_{n}\\I_{n}&0 \end{bmatrix}$,
 then $(\mathbf{C}^{2n},\omega)$ can be seen as a complex symplectic vector space with the symplectic form $\omega(x,y)=\langle Jx,y\rangle$ for any
 $x,y\in \mathbf{C}^{2n}.$ A complex subspace $\Lambda$ is Lagrangian if and only if $\omega\mid _{\Lambda}=0 $ and $\dim_{\mathbf{C}}\Lambda=n$.  We denote by  $Lag(\mathbf{C}^{2n},\omega)$
  the set of Lagrangian subspaces.
 Let $y(t)=P(t)\dot{x}(t)+Q(t)x(t), \ z(t)=(y(t),x(t))^T$, then we consider the most general self-adjoint  boundary conditions, namely,
 \begin{equation}\label{eq:original general boundary condition}
 (z(0),z(T))\in\Lambda_0,
 \end{equation}
 where $\Lambda_0\in Lag(\mathbf{C}^{2n}\oplus\mathbf{C}^{2n},-\omega\oplus\omega)$. Obviously, $\mathcal{A}$ is a self-adjoint operator on $L^2([0,T], \mathbf{C}^{2n} )$ with domain $$E_{\Lambda_0}(0,T):=\{x\in W^{2,2}([0,T], \mathbf{C}^{2n}), (z(0),z(T))\in\Lambda_0)   \}. $$
For a self-adjoint operator $A$, we denote its Morse index by $m^-(A)$, which is  the number of total negative eigenvalues of $A$.  Throughout of the paper, we always denote $m^0(A)=\dim\ker(A)$ and $m^+(A)$ be the total number of positive eigenvalues of $A$.   For a critical point $x$ of $\mathcal{F}$, we define the  Morse index by
$$ m^-(x, \Lambda_0)=m^-(\mathcal{A}), $$
and we always omit $x$ when there is no confusion.

Obviously, the Morse index depends on the boundary condition $\Lambda_0$. Let $\Lambda^n_N=\{0\}\oplus \mathbf{C}^n$, $\Lambda^n_D=\mathbf{C}^n\oplus\{0\}$, which is Lagrangian subspace of $(\mathbf{C}^{2n},\omega)$ and can be   considered as  the Neumann and Dirichlet boundary conditions.  For convenience, we set
$\Lambda_D=\Lambda^n_D\oplus\Lambda^n_D$ and  $\Lambda_N=\Lambda^n_N\oplus\Lambda^n_N$.  Obviously,
$$\Lambda_N=\{(z(0),z(T))\in\mathbf{C}^{4n}, y(0)=y(T)=0\}, \quad  \Lambda_D=\{(z(0),z(T))\in\mathbf{C}^{4n}, x(0)=x(T)=0 \}, $$
which means both start time and end time with Neumann or Dirichlet boundary conditions.

It is well known that the Morse index with Dirichlet boundary condition can be expressed as sum of conjugate points, please refer \cite{HS} and reference therein for the detail.  More precisely, by the standard Legendre transformation, the system (\ref{eq:S-L equation}) becomes into
\begin{eqnarray}
\dot{z}(t)=J\mathcal{B}(t)z(t), \label{eq:Hamilton system}
\end{eqnarray}
where
\begin{equation}
\label{eq:the form of B}\mathcal{B}(t)=\begin{bmatrix} P^{-1}(t) &- P^{-1}(t)Q(t)\\-Q^T(t)P^{-1}(t)&Q^{T}(t)P^{-1}(t)Q(t)-R(t) \end{bmatrix}.
\end{equation}
Let $\gamma(t)$ be the fundamental solution of  \eqref{eq:Hamilton system}, that is $\gamma(0)=I_{2n}$ and $\dot{\gamma}(t)=J\mathcal{B}(t)\gamma(t)$.  It is well known that
$$ \gamma(t)\in \Sp(2n):=\{M^TJM=J, M\in GL(\mathbf{R}^{2n})  \}. $$ We have the well known Morse index Theorem
\bea m^-(\Lambda_D)=\sum_{0<t_0<T} \dim\ker \gamma(t_0)\Lambda_D\cap\Lambda_D. \label{mor1} \eea
For general boundary condition $\Lambda_0$, we can't get a simple formula as in \eqref{mor1}.

In order to give the  Morse index Theorem with general boundary conditions, we  consider the difference  of $m^-(\Lambda_0)-m^-(\Lambda_D)$. The difference is expressed by Duistermaat triple index \cite{D}.
 Let $\alpha, \beta$ and $\delta$ be three isotropic subspaces of complex symplectic vector space $(V,\omega)$, the triple index $i(\alpha,\beta,\delta)$ is well defined  \cite{ZWZ} and satisfied
 \bea 0\leq i(\alpha,\beta,\delta)\leq \dim\alpha-\dim(\alpha\cap\beta+\beta\cap\delta). \label{triple inequality} \eea
For a matrix $M\in L(\mathbf{C}^m)$, we always denote by  $$Gr(M):=\{(x,Mx), x\in\mathbf{C}^m \},$$
which is a linear subspace of $\mathbf{C}^m\oplus \mathbf{C}^m$.  Since $\gamma(T)\in\Sp(2n)$, it is obvious that $Gr(\gamma(T))\in Lag(\mathbf{C}^{2n}\oplus\mathbf{C}^{2n},-\omega\oplus\omega)$, and  $i(Gr(\gamma(T)), \Lambda_0, \Lambda_D)$ is well defined.
\begin{thm}\label{thm:difference of two morse index}
For a critical point $x$ of the Lagrangian system \eqref{eq:L-S}, the Morse indices $m^{-}(\Lambda_0)$ and $m^-(\Lambda_D)$ satisfy
 \begin{equation}
 m^{-}(\Lambda_0)- m^-(\Lambda_D)=i(Gr(\gamma(T)), \Lambda_0, \Lambda_D).
 \end{equation}
\end{thm}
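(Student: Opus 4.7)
The plan is to convert the Morse index difference into a Maslov-type intersection count, and then identify that count with the Duistermaat triple index via a homotopy. First I would connect $\Lambda_D$ to $\Lambda_0$ by a continuous path $\Lambda(s) \in Lag(\mathbf{C}^{4n}, -\omega\oplus\omega)$ with $\Lambda(0)=\Lambda_D$ and $\Lambda(1)=\Lambda_0$. For each $s$, let $\mathcal{A}_s$ denote the self-adjoint realization of the Sturm-Liouville operator on $E_{\Lambda(s)}(0,T)$. By the Legendre convexity condition, the negative spectrum of each $\mathcal{A}_s$ is finite and the essential spectrum sits at $+\infty$, so the spectral flow $\mathrm{sf}(\mathcal{A}_s)_{s\in[0,1]}$ is well defined and equals $m^-(\Lambda_D)-m^-(\Lambda_0)$ up to the sign coming from crossing conventions.

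Next I would compute the spectral flow geometrically. The key observation is $\ker \mathcal{A}_s = \{x\in E_{\Lambda(s)} : \mathcal{A}x=0\}$, and via the Hamiltonian reformulation $\dot z=J\mathcal{B}(t)z$ this kernel is in bijection with $Gr(\gamma(T))\cap \Lambda(s)$. Crossings of eigenvalues through $0$ therefore correspond exactly to non-transverse intersections of the fixed Lagrangian $Gr(\gamma(T))$ with the moving Lagrangian $\Lambda(s)$ in the symplectic space $(\mathbf{C}^{4n},-\omega\oplus\omega)$. A standard computation of the crossing form (differentiating the eigenvalue equation in $s$ and using the Legendre positivity to relate the sign of the crossing form on $\mathcal{A}_s$ to the Maslov crossing form on $Gr(\gamma(T))\cap \Lambda(s)$) then identifies the spectral flow with the Maslov index
\[
\mu\bigl(Gr(\gamma(T)),\,\Lambda(s)\bigr)_{s\in[0,1]}.
\]

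The last step is to show this Maslov index equals $i(Gr(\gamma(T)),\Lambda_0,\Lambda_D)$, independently of the chosen path. This is where the Duistermaat/H\"ormander framework enters: for any two endpoints, the Maslov index of a path between them, relative to a fixed Lagrangian $Gr(\gamma(T))$, is the H\"ormander index $s(\Lambda_D,\Lambda_0;Gr(\gamma(T)),L_\infty)$ for an auxiliary transverse $L_\infty$, and this H\"ormander index decomposes into a difference of two triple indices. Choosing $L_\infty$ transverse to all relevant Lagrangians and using the normalization $i(\alpha,\beta,\delta)=0$ when $\alpha=\delta$, the decomposition collapses to $i(Gr(\gamma(T)),\Lambda_0,\Lambda_D)$. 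Combined with the sign convention fixed in the spectral flow step, this yields the desired identity.

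The main obstacle will be the spectral flow step, because the operators $\mathcal{A}_s$ have varying domains as $\Lambda(s)$ moves. I would handle this by converting to a fixed domain via a symplectic unitary trivialization of the boundary data (writing $\Lambda(s)$ as the graph of a unitary and composing the operator with the corresponding bounded transformation), so that $\mathcal{A}_s$ becomes a norm-continuous path of self-adjoint operators on one Hilbert space with a common domain, at which point the standard crossing-form computation of spectral flow applies. A secondary technical point is verifying that the crossing form induced by $\mathcal{A}_s$ on $\ker\mathcal{A}_s$ matches, up to sign, the Maslov crossing form on $Gr(\gamma(T))\cap\Lambda(s)$; the Legendre condition guarantees the required non-degeneracy, letting one replace an arbitrary path by one with only regular crossings and conclude by homotopy invariance.
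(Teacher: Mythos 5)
Your overall strategy (spectral flow of a family of boundary conditions, identified with a Maslov-type index and then with a triple index) is a known alternative to the paper's argument, but as written it contains a genuine gap in the middle: your two identities $\mathrm{sf}(\mathcal{A}_s)=m^-(\Lambda_D)-m^-(\Lambda_0)$ (up to sign) and $\mathrm{sf}(\mathcal{A}_s)=\mu\bigl(Gr(\gamma(T)),\Lambda(s)\bigr)$ cannot both hold for an arbitrary path, and your final claim that the Maslov index of the path is path-independent and equals a single H\"ormander index is false. The difference of Morse indices is of course independent of the path joining $\Lambda_D$ to $\Lambda_0$, whereas $\mu\bigl(Gr(\gamma(T)),\Lambda(s)\bigr)$ depends on the homotopy class of the path (a noncontractible loop of boundary conditions changes it by the winding number while the Morse indices at the endpoints coincide); only differences of Maslov indices of the \emph{same} path with respect to two reference Lagrangians are path-independent, which is exactly the content of Definition \ref{defi2.5}. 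The analytic source of the trouble is that any path from $\Lambda_D$ to a general $\Lambda_0$ must cross the singular cycle of $\Lambda_D$: $\dim(\Lambda(s)\cap\Lambda_D)$ drops along the way, the Hermitian boundary matrix $A_s$ in the form \eqref{eq:precise express of boundary condition} blows up there, and eigenvalues of $\mathcal{A}_s$ escape to (or emerge from) $-\infty$. At such parameters the relation between spectral flow and the change of $m^-$ breaks down, and the unitary trivialization onto a fixed domain does not remove this phenomenon --- it is precisely what produces the correction term (a Maslov/triple index involving $\Lambda_D$) that your argument ``collapses'' away. Relatedly, the H\"ormander index you invoke, $s(\Lambda_D,\Lambda_0;Gr(\gamma(T)),L_\infty)=i(\Lambda_D,\Lambda_0,L_\infty)-i(\Lambda_D,\Lambda_0,Gr(\gamma(T)))$ by \eqref{eq:the Hormander index computed by triple index}, does not reduce to $i(Gr(\gamma(T)),\Lambda_0,\Lambda_D)$ via $i(\alpha,\alpha,\beta)=0$; no entry is repeated in the relevant slots, so the stated collapse does not occur. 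To repair the approach you would have to (a) fix a specific path, e.g.\ one staying in the affine chart of Lagrangians transverse to an auxiliary $L_\infty$, (b) prove the spectral-flow formula for that path including the contribution of eigenvalues escaping at the crossings of the $\Lambda_D$-cycle, and (c) only then convert the resulting difference of two Maslov indices into triple indices.

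For comparison, the paper avoids paths of operators altogether: it proves an abstract splitting theorem for Hermitian forms (Theorem \ref{thm:abstract_re_morse}, an infinite-dimensional version of the Ballmann--Thorbergsson--Ziller formula \eqref{eq:finite dimension index theorem}), applies it to the index form $I$ on $H_{\Lambda_0}([0,T])$ with the Dirichlet subspace $H_0$, and computes $I$ restricted to the finite-dimensional $I$-orthogonal complement $H_0^I$ (the space of solutions of \eqref{eq:S-L equation}) directly in terms of boundary data, obtaining $I(\xi,\xi)=-\mathfrak{Q}(z,z)$ for the Duistermaat form $\mathfrak{Q}(Gr(\gamma(T)),\Lambda_0;\Lambda_D)$; the identity $m^-(\Lambda_0)-m^-(\Lambda_D)=i(Gr(\gamma(T)),\Lambda_0,\Lambda_D)$ then follows from \eqref{trip1} with no homotopy-invariance or escaping-eigenvalue issues to control.
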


 Let $V(\Lambda_0)$ be the subspace of $\Lambda_N$ defined by $$V(\Lambda_0)=(\Lambda_0+\Lambda_D)\cap\Lambda_N,$$  then $(x(0),x(T))^T\in V(\Lambda_0)$.
We always denote $$\nu(\Lambda_0)=\dim V(\Lambda_0).$$
Obviously,  $\nu(\Lambda_D)=0$ and $\nu(\Lambda_N)=2n$.  Direct computations show that for the periodic boundary condition $\Lambda_P$, $\nu(\Lambda_P)=n$.

Please note that \eqref{tri1} implies \bea i(Gr(\gamma(T)), \Lambda_0, \Lambda_D)\leq \nu(\Lambda_0), \eea we have  the following inequality.

\begin{cor}\label{cor1.1}
\begin{equation}\label{eq:inequality of morse index}
m^-(\Lambda_D)\leq m^-(\Lambda_0)\leq m^-(\Lambda_D)+\nu(\Lambda_0). \end{equation}
\end{cor}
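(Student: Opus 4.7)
The corollary is an immediate consequence of Theorem~\ref{thm:difference of two morse index} combined with the triple index inequality \eqref{triple inequality} and a short linear-algebra computation of $\nu(\Lambda_0)$.

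First, by Theorem~\ref{thm:difference of two morse index},
\[
m^-(\Lambda_0)-m^-(\Lambda_D)=i(Gr(\gamma(T)),\Lambda_0,\Lambda_D),
\]
so the lower bound in \eqref{eq:inequality of morse index} is exactly the non-negativity part of \eqref{triple inequality}. For the upper bound, I would apply the right-hand estimate in \eqref{triple inequality} with $\alpha=Gr(\gamma(T))$, $\beta=\Lambda_0$, $\delta=\Lambda_D$; since $\dim Gr(\gamma(T))=2n$ and $\Lambda_0\cap\Lambda_D$ is contained in $Gr(\gamma(T))\cap\Lambda_0+\Lambda_0\cap\Lambda_D$, this yields
\[
i(Gr(\gamma(T)),\Lambda_0,\Lambda_D)\leq 2n-\dim(Gr(\gamma(T))\cap\Lambda_0+\Lambda_0\cap\Lambda_D)\leq 2n-\dim(\Lambda_0\cap\Lambda_D).
\]
It therefore suffices to identify the right-hand side with $\nu(\Lambda_0)$.

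For this identification I would use the transversality of the Dirichlet and Neumann Lagrangians: since $\Lambda_D\cap\Lambda_N=\{0\}$ and $\dim\Lambda_D+\dim\Lambda_N=4n$, we have $\Lambda_D+\Lambda_N=\mathbf{C}^{4n}$, hence also $(\Lambda_0+\Lambda_D)+\Lambda_N=\mathbf{C}^{4n}$. Two applications of the standard dimension formula then give
\[
\nu(\Lambda_0)=\dim((\Lambda_0+\Lambda_D)\cap\Lambda_N)=\dim(\Lambda_0+\Lambda_D)+2n-4n=2n-\dim(\Lambda_0\cap\Lambda_D),
\]
where in the last step I used $\dim(\Lambda_0+\Lambda_D)=4n-\dim(\Lambda_0\cap\Lambda_D)$. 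Chaining the inequalities with this equality closes the argument.

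There is no substantial obstacle here; once Theorem~\ref{thm:difference of two morse index} is granted, everything reduces to linear algebra of Lagrangian subspaces of $\mathbf{C}^{4n}$. The only step requiring a little care is the final identification $\nu(\Lambda_0)=2n-\dim(\Lambda_0\cap\Lambda_D)$, which is where the transversality of $\Lambda_D$ and $\Lambda_N$ is essential.
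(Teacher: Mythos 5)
Your proposal is correct and follows essentially the same route as the paper: the paper also obtains the corollary directly from Theorem~\ref{thm:difference of two morse index} together with the bound $i(Gr(\gamma(T)),\Lambda_0,\Lambda_D)\leq\nu(\Lambda_0)$ derived from \eqref{triple inequality}. You simply make explicit the linear-algebra identification $\nu(\Lambda_0)=2n-\dim(\Lambda_0\cap\Lambda_D)$ (via transversality of $\Lambda_D$ and $\Lambda_N$) that the paper leaves to the reader, and that computation is accurate.
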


It is well known that for Dirichlet boundary condition the Morse index has some monotone property \cite{CH}. To be precisely, for the linear system \eqref{eq:S-L equation} defined on  time interval $[a,b]$.   Let $$m^-_{[a,b]}(\Lambda_0)=m^-(\mathcal{A}|_{E_{\Lambda_0}(a,b)}).$$   For $[c,d]\subset[a,b]$,  we have
\bea m^-_{[a,b]}(\Lambda_D)\geq  m^-_{[a,c]}(\Lambda_D)+m^-_{[c,d]}(\Lambda_D)+m^-_{[d,b]}(\Lambda_D).  \eea

This property is not true for general boundary conditions, instead, as a corollary of \eqref{eq:inequality of morse index}, we have the next estimation.
\begin{cor}\label{cor1.1}
\begin{equation}\label{eq:inequality of morse index1}
m_{[a,b]}^-(\Lambda_0)\geq m_{[c,d]}^-(\Lambda_0)+m_{[a,c]}^-(\Lambda_D)+m_{[d,b]}^-(\Lambda_D)-\nu(\Lambda_0).
\end{equation}
\end{cor}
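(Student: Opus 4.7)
The plan is to chain together three ingredients that are already available in the excerpt: the two-sided comparison \eqref{eq:inequality of morse index} between $m^-(\Lambda_0)$ and $m^-(\Lambda_D)$, the subadditivity of the Dirichlet Morse index under subdivision, and a final application of \eqref{eq:inequality of morse index} on the middle subinterval. Since the Dirichlet index is the anchor in all the estimates, the natural strategy is to pass from $\Lambda_0$ on $[a,b]$ down to Dirichlet on $[a,b]$, split the Dirichlet index additively, and then climb back up from Dirichlet to $\Lambda_0$ on the middle piece $[c,d]$.

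The first step is to apply the lower half of \eqref{eq:inequality of morse index} on the full interval $[a,b]$ to obtain
\begin{equation}
m^-_{[a,b]}(\Lambda_0) \;\geq\; m^-_{[a,b]}(\Lambda_D). \nonumber
\end{equation}
The second step invokes the Dirichlet subdivision inequality recalled just before the corollary,
\begin{equation}
m^-_{[a,b]}(\Lambda_D) \;\geq\; m^-_{[a,c]}(\Lambda_D) + m^-_{[c,d]}(\Lambda_D) + m^-_{[d,b]}(\Lambda_D). \nonumber
\end{equation}
The third step uses the upper half of \eqref{eq:inequality of morse index}, applied on the subinterval $[c,d]$ with the boundary condition $\Lambda_0$, which yields
\begin{equation}
m^-_{[c,d]}(\Lambda_D) \;\geq\; m^-_{[c,d]}(\Lambda_0) - \nu(\Lambda_0). \nonumber
\end{equation}
Substituting this into the previous inequality and combining with the first step delivers the claim.

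Because all three inputs are already in hand, there is no real obstacle; the only thing to verify carefully is that \eqref{eq:inequality of morse index} indeed applies on the subinterval $[c,d]$ with the very same Lagrangian $\Lambda_0 \subset \mathbf{C}^{2n}\oplus\mathbf{C}^{2n}$, so that the quantity $\nu(\Lambda_0)$ appearing on the right-hand side is the same as in the corollary statement. This is clear from the definition $V(\Lambda_0)=(\Lambda_0+\Lambda_D)\cap\Lambda_N$, which is a property of the Lagrangian $\Lambda_0$ alone and does not depend on the time interval. Thus the argument is a short, direct chain of inequalities once Theorem~\ref{thm:difference of two morse index} and the Dirichlet splitting are in place.
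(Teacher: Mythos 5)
Your proof is correct and is essentially the argument the paper intends: the corollary is stated there as an immediate consequence of the two-sided bound \eqref{eq:inequality of morse index} together with the Dirichlet subdivision inequality, which is exactly the chain you assemble (lower bound on $[a,b]$, Dirichlet subadditivity, upper bound on $[c,d]$). Your remark that $\nu(\Lambda_0)$ depends only on the Lagrangian $\Lambda_0$ and not on the time interval is the right point to check, and it settles the only possible subtlety.
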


Let $\Lambda_s\in Lag(\mathbf{C}^n, -\omega)$ and $\Lambda_e\in Lag(\mathbf{C}^n, \omega)$, then the separated boundary conditions can be given by $\Lambda_0=\Lambda_s\oplus\Lambda_e$.  The $\Lambda_s$ conjugate point is defined by $\gamma(t)\Lambda_s\cap\Lambda^n_D\neq \{0\}$. We give the general Morse index Theorem as follows
\begin{thm}\label{thm: morse index thm gene}

For a critical point $x$ of the Lagrangian system \eqref{eq:L-S} with the separated boundary condition, the Morse index $m^{-}(\Lambda_s\oplus\Lambda_e)$ satisfies
\bea
 m^-(\Lambda_s\oplus\Lambda_e)=\sum_{0<t_0<T} \dim(\gamma(t_0)\Lambda_s\cap\Lambda^n_D)+ i(\gamma(T)\Lambda_s,\Lambda_e,\Lambda^n_D).
 \eea\label{mor2}

\end{thm}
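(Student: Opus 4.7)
The strategy is to combine Theorem \ref{thm:difference of two morse index} with a reduction through the intermediate ``half-Dirichlet'' boundary condition $\Lambda_s\oplus\Lambda^n_D$, then decompose the resulting triple index on the product symplectic space $(\mathbf{C}^{2n}\oplus\mathbf{C}^{2n},-\omega\oplus\omega)$ into a single-factor triple index on $(\mathbf{C}^{2n},\omega)$.

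Applying Theorem \ref{thm:difference of two morse index} to both $\Lambda_0=\Lambda_s\oplus\Lambda_e$ and $\Lambda_0=\Lambda_s\oplus\Lambda^n_D$ and subtracting yields
\[
m^-(\Lambda_s\oplus\Lambda_e) - m^-(\Lambda_s\oplus\Lambda^n_D) = i(Gr(\gamma(T)),\Lambda_s\oplus\Lambda_e,\Lambda^n_D\oplus\Lambda^n_D) - i(Gr(\gamma(T)),\Lambda_s\oplus\Lambda^n_D,\Lambda^n_D\oplus\Lambda^n_D).
\]
The first main task is to show this difference equals $i(\gamma(T)\Lambda_s,\Lambda_e,\Lambda^n_D)$. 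I would do this by applying the symplectic map $\Phi(u,v)=(u,\gamma(T)^{-1}v)$, which sends $Gr(\gamma(T))$ to the diagonal $\Delta$ and preserves the form $-\omega\oplus\omega$. Invariance of the triple index under $\Phi$ reduces both terms to triple indices with $\Delta$ as first argument, whose common $\Lambda_s$-component in the first factor and the common $\gamma(T)^{-1}\Lambda^n_D$ in both Lagrangian pairs of the difference must be shown to cancel via the standard reduction (from \cite{ZWZ}) of a triple index with a diagonal argument to a single-factor triple index on $(\mathbf{C}^{2n},\omega)$; the residual term is precisely $i(\gamma(T)\Lambda_s,\Lambda_e,\Lambda^n_D)$.

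The second task is to establish the ``mixed Dirichlet'' Morse index formula
\[
m^-(\Lambda_s\oplus\Lambda^n_D) = \sum_{0<t_0<T}\dim(\gamma(t_0)\Lambda_s\cap\Lambda^n_D).
\]
This is classical: the associated Hessian $\mathcal{A}$ corresponds to the boundary value problem $\gamma(0)v\in\Lambda_s$, $x(T)=0$, and the Legendre convexity $P(t)>0$ forces every crossing of the path $t\mapsto\gamma(t)\Lambda_s$ with $\Lambda^n_D$ to be a positive crossing in the Robbin--Salamon sense. Hence the Maslov index of this path coincides with the total intersection dimension on $(0,T)$, and a standard spectral flow argument (as in \cite{CH}) identifies this Maslov index with $m^-(\Lambda_s\oplus\Lambda^n_D)$. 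Assembling the two tasks gives the theorem.

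\textbf{Main obstacle.} The delicate point is the first task: the algebraic reduction of the product-space triple index. Because $Gr(\gamma(T))$ is neither a product nor the diagonal without a change of variables, the manipulation requires careful tracking of how the common first factor $\Lambda_s$ and the common Lagrangian $\Lambda^n_D$ drop out of the difference of the two triple indices, leaving a clean single-factor expression. This relies on the transformation and reduction axioms of the triple index developed in \cite{ZWZ} and must be checked in the possibly non-transverse case, since no transversality is assumed between $\Lambda_s$, $\Lambda_e$, $\gamma(T)\Lambda_s$, and $\Lambda^n_D$.
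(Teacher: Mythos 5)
Your proposal follows essentially the same route as the paper: subtract the two applications of Theorem \ref{thm:difference of two morse index} for $\Lambda_s\oplus\Lambda_e$ and $\Lambda_s\oplus\Lambda^n_D$, reduce the resulting difference of product-space triple indices to $i(\gamma(T)\Lambda_s,\Lambda_e,\Lambda^n_D)$ (your diagonal change of variables is precisely what the paper's Lemma \ref{lem:lemma for computing any symplectic matrix boundary condition} and Corollary \ref{cor:lemma for computing any symplectic matrix boundary condition} establish by direct computation with $\mathfrak{Q}$, with no transversality needed), and then identify $m^-(\Lambda_s\oplus\Lambda^n_D)$ with the count of $\Lambda_s$-conjugate points using positivity of the crossing form coming from $P(t)>0$. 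The only bookkeeping point to watch in your second step is the crossing at $t=0$, which contributes $\dim(\Lambda_s\cap\Lambda^n_D)$ to the Maslov index, so ``Maslov index $=$ Morse index'' for this boundary condition is not exact in general; in the paper this term is cancelled via Corollary \ref{cor:relationship between M-M index} together with $i(Gr(I_{2n}),\Lambda_s\oplus\Lambda^n_D,\Lambda_D)=n-\dim(\Lambda_s\cap\Lambda^n_D)$, yielding the same final formula you state.
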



\begin{rem}
Theorem  \ref{thm: morse index thm gene} can be used to study the Morse index of geodesics on  Riemannian manifold.   The classical Morse index theorem for a Riemannian manifold $(M,g)$ can be traced back to \cite{M.M1}.
 The generalizations of this result  are referred to \cite{Am}, \cite{BTZ}, \cite{E}, \cite{KD}, \cite{WJ} and reference therein.   Kalish \cite{KD} proved the Morse index theorem where the end points are in submanifolds.  As a corollary of Theorem \ref{thm: morse index thm gene}, we generalize Kalish's results to the degenerate case. Please refer to Example \ref{exam:proof of kalish's result}  for the details.
\end{rem}

The Morse index under the general boundary condition  can be expressed by the Maslov index. This is first studied by Duistermaat \cite{D}. In the periodic boundary condition it is proved by \cite{Lon4}, \cite{LA}, \cite{A1}. In \cite{HS}, Hu and Sun give a clear form for the case of the boundary condition given by $(x(0),x(T))\in V\subset \Lambda_N$.  More related  results can be found in \cite{Z} and the reference therein.

For a solution $z$ of \eqref{eq:Hamilton system}, its Maslov index (refer to  \cite{Ar},  \cite{CLM}, \cite{HS}) is given by
\begin{equation}
\mu^-(z)=\mu(\Lambda_0,Gr(\gamma(t)),t\in[0,T]).
 \end{equation}
As a corollary of Theorem \ref{thm:difference of two morse index}, we can get the relationship between the Maslov index and Morse index for the boundary condition \eqref{eq:original general boundary condition}.  \begin{cor}\label{cor:relationship between M-M index}
For a critical point $x$ of the Lagrangian system \eqref{eq:L-S} with the boundary condition \eqref{eq:original general boundary condition}, its Morse index $m^{-}(x)$ and Maslov index $\mu^-(z)$ satisfy
 \begin{equation}\label{eq:relationship between M-M index}
 \mu^{-}(z)- m^-(x)=n-i(Gr(I_{2n}), \Lambda_0, \Lambda_D).
 \end{equation}
\end{cor}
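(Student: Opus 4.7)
The plan is to deduce the identity by combining Theorem \ref{thm:difference of two morse index} with two classical properties of the Maslov index, namely the H\"ormander-type identity expressing how $\mu(\Lambda,Gr(\gamma(\cdot)))$ varies with the reference Lagrangian $\Lambda$, and the Morse index theorem in the Dirichlet case stated in \eqref{mor1}. The strategy is: first rewrite the target equation using Theorem \ref{thm:difference of two morse index}, then reduce the Maslov side to the Dirichlet Maslov index via a H\"ormander-type identity, and finally settle the Dirichlet case by an explicit computation of the boundary contribution at $t=0$.

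First I would apply Theorem \ref{thm:difference of two morse index} to write $m^-(x)=m^-(\Lambda_0)=m^-(\Lambda_D)+i(Gr(\gamma(T)),\Lambda_0,\Lambda_D)$, so that the target identity becomes
\begin{equation*}
\mu^-(z)=m^-(\Lambda_D)+i(Gr(\gamma(T)),\Lambda_0,\Lambda_D)+n-i(Gr(I_{2n}),\Lambda_0,\Lambda_D).
\end{equation*}
Next I would invoke the H\"ormander-type identity for the Maslov index in $Lag(\mathbf{C}^{4n},-\omega\oplus\omega)$, applied to the Lagrangian path $t\mapsto Gr(\gamma(t))$ and the two reference Lagrangians $\Lambda_0,\Lambda_D$, which yields
\begin{equation*}
\mu(\Lambda_0,Gr(\gamma(\cdot)))-\mu(\Lambda_D,Gr(\gamma(\cdot)))=i(Gr(\gamma(T)),\Lambda_0,\Lambda_D)-i(Gr(I_{2n}),\Lambda_0,\Lambda_D),
\end{equation*}
the sign being fixed by the concatenation-plus-reversal property of $\mu$. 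After this substitution the corollary reduces to showing
\begin{equation*}
\mu(\Lambda_D,Gr(\gamma(\cdot)))=m^-(\Lambda_D)+n.
\end{equation*}

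For the Dirichlet case, interior intersections of $Gr(\gamma(t))$ with $\Lambda_D$ correspond exactly to $\Lambda_D^n$-conjugate points of the Hamiltonian flow $\gamma$, and hence by \eqref{mor1} they contribute $m^-(\Lambda_D)$ to $\mu(\Lambda_D,Gr(\gamma(\cdot)))$. The extra constant $n$ comes from the initial intersection: since $Gr(\gamma(0))=Gr(I_{2n})$ meets $\Lambda_D$ in an $n$-dimensional subspace and the crossing form there is positive definite (a direct consequence of the Legendre convexity of $P(t)$ through the definition of $\mathcal{B}(t)$ in \eqref{eq:the form of B}), the whole $n$-dimensional intersection is counted at $t=0$ in the convention used in \cite{HS}. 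Combining the three ingredients gives exactly \eqref{eq:relationship between M-M index}.

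The main obstacle is the last step: correctly pinning down the convention for the endpoint contribution of $\mu$, since different normalizations (half-integer endpoint counting versus one-sided integer counting) produce different additive constants, and the clean $+n$ must come out uniformly in $\Lambda_0$. This is handled by working in the convention of \cite{HS} and using positivity of the crossing form at $t=0$ to argue that the entire initial intersection $Gr(I_{2n})\cap\Lambda_D$ is counted; the H\"ormander identity then transports this information to the general boundary $\Lambda_0$, absorbing the residual $i(Gr(I_{2n}),\Lambda_0,\Lambda_D)$ into the right-hand side of \eqref{eq:relationship between M-M index}.
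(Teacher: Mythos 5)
Your proposal is correct and follows essentially the same route as the paper: combine Theorem \ref{thm:difference of two morse index}, the H\"ormander-index identity \eqref{eq:the Hormander index computed by triple index} applied to the path $Gr(\gamma(t))$ with reference Lagrangians $\Lambda_0$ and $\Lambda_D$, and the Dirichlet normalization $\mu(\Lambda_D,Gr(\gamma(\cdot)))=m^-(\Lambda_D)+n$. The only difference is that the paper simply cites \cite[Remark 3.6]{HS} for that last fact, whereas you re-derive it from \eqref{mor1} and the positive definiteness of the crossing form (the same computation the paper carries out in the proof of Theorem \ref{thm: morse index thm gene}), which is fine provided you note that positivity holds at the interior crossings and at $t=T$ as well, not just at $t=0$.
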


Since $0\leq i(Gr(I_{2n}), \Lambda_0, \Lambda_D)\leq 2n$ for any $\Lambda_0$, we have
\begin{equation}\label{eq:relationship between M-M index2}
 -n\leq\mu^{-}(z)- m^-(x)\leq n.
 \end{equation}


 \begin{exam}\label{exm:examples}
Denote the difference between the Maslov index and Morse index by $\Delta$. By Corollary \ref{cor:relationship between M-M index}, we list several common examples to compute associated $\Delta$.

 1. $\mathbf{\cite[Theorem 1.2]{HS}}$ Let $V$ be any subspace of $\Lambda_N$ and the boundary condition is given by $\Lambda_V=\mathcal{J}V^{\perp}\oplus V$, where $\mathcal{J}=-J\oplus J$. Then $$i(Gr(I_{2n}), \Lambda_V, \Lambda_D)=n-\dim(V^\perp\cap Gr(-I_n)),\quad  \Delta(\Lambda_V)=\dim(V^\perp\cap Gr(-I_n)).$$
 Follows  \cite{HS}, we consider two important cases. The first case is given by $V=Gr(M)$, where   $M\in GL(\mathbf{C}^n)$. For this case,  $\dim(V^\perp\cap Gr(-I_n))=\dim(M-I_{n})$. Then $$
 \Delta(\Lambda_V)=\dim(M-I_{n}).$$
The second case is given by  $V=V_1\oplus V_2$, where
 $V_1, V_2$ be two subspaces of $\mathbf{C}^n$.  For this case $\dim(V^\perp\cap Gr(-I_n))=\dim(V^\perp_1\cap V^\perp_2)$. Then $$
 \Delta(\Lambda_V)=\dim(V^\perp_1\cap V^\perp_2).$$
As special cases, we have

  $\mathbf{Dirichlet \ boundary condition}$ $i(Gr(I_{2n}), \Lambda_D, \Lambda_D)=0,\quad  \Delta(\Lambda_D)=n$.

 $\mathbf{Neumann \ boundary condition}$ $i(Gr(I_{2n}), \Lambda_N, \Lambda_D)=n,\quad  \Delta(\Lambda_N)=0$.

 $\mathbf{Periodic\ boundary condition}$ $i(Gr(I_{2n}), \Lambda_P, \Lambda_D)=0,\quad  \Delta(\Lambda_P)=n$.

2. $\mathbf{Separated \ boundary \ condition}$

 For this case, we have
 \begin{equation}\label{eq:difference of morse and maslov for separated BC}
 i(Gr(I_{2n}),\Lambda_s\oplus\Lambda_e,\Lambda_D)=n-\dim(\Lambda_s\cap\Lambda^n_D)+i(\Lambda_s,\Lambda_e,\Lambda^n_D)
 \end{equation}
 and
 \begin{equation}
 \Delta(\Lambda_s\oplus\Lambda_e)=\dim(\Lambda_s\cap\Lambda^n_D)-i(\Lambda_s,\Lambda_e,\Lambda^n_D).
 \end{equation}
Particularly, for the special case where $y(0)=A_s x(0)$, $y(T)=A_e x(T)$, we have
\begin{equation}\label{eq:formula for special separated boundary condition}
i(Gr(I_{2n}),\Lambda_s\oplus\Lambda_e,\Lambda_D)=n+m^+(A_e-A_s), \quad \Delta(\Lambda_s\oplus\Lambda_e)=-m^+(A_e-A_s).
\end{equation}
Note that for this special kind of separated boundary condition, we always have $\mu^-(z)\leq m^-(\Lambda_s\oplus\Lambda_e)$. Moreover, if we take $A_s=0, A_e=I_n$, then $\mu^-(z)- m^-(\Lambda_s\oplus\Lambda_e)=-n$.
\end{exam}


Recently, the authors studied the relationship between Maslov index and Morse index for Schr\"{o}dinger operators on finite interval $[0,1]$ in  \cite{HoSu} under the separated boundary condition. In above example, we give a different proof by using our general result.


As an application, we consider the stability problem of brake periodic orbits.  For a periodic orbit $x$ with fundamental solution $\gamma(t)$, $x$ is called spectral stable if $\sigma(\gamma(T))\subset \mathbf{U}$, is called linear stable if moreover $\gamma(T)$ is semi-simple.

For a brake periodic orbit,
 $P(t), Q(t)$ and $R(t)$ are all real $T$-periodic symmetric matrices and satisfy the following condition:
\begin{equation}\label{eq:brake property of efficient matrix}
P(-t)=P(t), \quad Q(-t)=-Q(t), \quad R(-t)=R(t), \quad \forall t\in \mathbf{R}.
\end{equation}
 As an application of Theorem \ref{thm:difference of two morse index}, we can give an estimation of the geometrical multiplicity of all non-real eigenvalues of $\gamma(T)$.
\begin{thm}\label{thm:stability of brake orbit1}
Let $\mathbf{C}^\perp=\{\lambda\in\mathbf{C}\ | \ Im(\lambda)>0\}$, for the brake orbit, we have
\begin{equation}\label{eq:estimate of eigenvalues not real1}
\dim\bigoplus_{\lambda\in\sigma(\gamma(T))\cap(\mathbf{C}^\perp\cup\{\pm1\})}\ker(\gamma(T)-\lambda I_{2n})\leq m^-(\Lambda_N)+m^0(\Lambda_N)-m^-(\Lambda_D).
\end{equation}
and
\begin{equation}\label{eq:estimate of eigenvalues not real}
\dim\bigoplus_{\lambda\in\sigma(\gamma(T))\cap\mathbf{C}^\perp}\ker(\gamma(T)-\lambda I_{2n})\leq m^-(\Lambda_N)-m^-(\Lambda_D).
\end{equation}
\end{thm}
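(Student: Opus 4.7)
The plan is to reduce both inequalities to purely linear-algebraic statements about the monodromy $M:=\gamma(T)$, and then exploit the brake symmetry of $M$. Applying Theorem~\ref{thm:difference of two morse index} with $\Lambda_0=\Lambda_N$ gives
$$ m^-(\Lambda_N)-m^-(\Lambda_D)=i(Gr(M),\Lambda_N,\Lambda_D). $$
Moreover, an element of $\ker\mathcal{A}|_{E_{\Lambda_N}(0,T)}$ is determined by an initial value $z_0\in\Lambda_N^n$ with $Mz_0\in\Lambda_N^n$, so $m^0(\Lambda_N)=\dim(Gr(M)\cap\Lambda_N)$. Hence \eqref{eq:estimate of eigenvalues not real1} and \eqref{eq:estimate of eigenvalues not real} are equivalent to the symplectic inequalities
$$ \sum_{\lambda\in\sigma(M)\cap\mathbf{C}^\perp}\dim\ker(M-\lambda I)\leq i(Gr(M),\Lambda_N,\Lambda_D), $$
and its refinement with the $\pm 1$ spectrum added on the left and $\dim(Gr(M)\cap\Lambda_N)$ added on the right.

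Next I extract the brake algebraic structure. Setting $N:=\mathrm{diag}(I_n,-I_n)$, the symmetry assumptions \eqref{eq:brake property of efficient matrix} imply $\mathcal{B}(-t)=N\mathcal{B}(t)N$, and since $NJN=-J$ one deduces $\gamma(-t)=N\gamma(t)N$. Combined with $T$-periodicity this yields the factorization $M=NS^{-1}NS$ with $S:=\gamma(T/2)$, equivalently the reversibility relation $NMN=M^{-1}$. In particular, $v\mapsto Nv$ sends $\ker(M-\lambda I)$ isomorphically onto $\ker(M-\lambda^{-1}I)$, which already enforces the generic symplectic pairing $\lambda\leftrightarrow\lambda^{-1}$ but additionally fixes an explicit intertwiner that will be needed for the Krein-form analysis below.

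With this in hand I decompose $\mathbf{C}^{2n}$ as a symplectically orthogonal, $M$-invariant direct sum of four types of blocks: (i) the $\pm 1$ blocks, (ii) real reciprocal pairs $\{\lambda,\lambda^{-1}\}$ with $\lambda\in\mathbf{R}\setminus\{0,\pm 1\}$, (iii) non-real unit-circle conjugate pairs $\{\lambda,\bar\lambda\}$ with $|\lambda|=1$, and (iv) off-unit-circle quadruples $\{\lambda,\bar\lambda,\lambda^{-1},\bar\lambda^{-1}\}$. Both $i(Gr(M),\Lambda_N,\Lambda_D)$ and $\dim(Gr(M)\cap\Lambda_N)$ are additive under this decomposition, so it suffices to estimate block by block; for this I use the characterization of the triple index as the negative inertia of an explicit Hermitian form (cf.\ \cite{ZWZ}) built from the symplectic form together with projectors to $\Lambda_N$ and $\Lambda_D$.

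The main obstacle is the per-block Hermitian-form computation. Type~(ii) blocks do not contribute to either side. On type~(iv) quadruples the Krein form vanishes identically, and I expect the restricted form representing the triple index to be negative-definite on a subspace of dimension $2\dim\ker(M-\lambda I)$ — exactly the $\mathbf{C}^\perp$-count for the block — the $N$-reversibility providing the identification of $E_\lambda$ with $E_{\lambda^{-1}}$ that lets one diagonalize the form. On type~(iii) pairs the Krein form on $\ker(M-\lambda I)$ is non-degenerate and its signature is constrained by the brake involution $N$, giving a local triple-index contribution equal to the geometric multiplicity $\dim\ker(M-\lambda I)$ (which is the correct count since $\bar\lambda\notin\mathbf{C}^\perp$). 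On type~(i) blocks the intersection $\ker(M\mp I)\cap\Lambda_N^n$ is absorbed into $\dim(Gr(M)\cap\Lambda_N)$, and a direct inertia calculation shows that the complementary part of $\ker(M\mp I)$ contributes to the triple index. Summing the four block estimates yields both \eqref{eq:estimate of eigenvalues not real1} and \eqref{eq:estimate of eigenvalues not real}; the extra $m^0(\Lambda_N)$ in \eqref{eq:estimate of eigenvalues not real1} is precisely the part of the $\pm 1$-eigenspace not already captured by the triple index.
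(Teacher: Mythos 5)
Your reduction is the same as the paper's: Theorem \ref{thm:difference of two morse index} with $\Lambda_0=\Lambda_N$, the identification $m^0(\Lambda_N)=\dim(Gr(\gamma(T))\cap\Lambda_N)$, and the reversibility relation coming from \eqref{eq:brake property of efficient matrix} (the paper's $(\mathcal{N}\gamma(T))^2=I_{2n}$) are all correct and are exactly the ingredients used in Section 4. The gap lies in the step you build everything else on: the claim that both $i(Gr(M),\Lambda_N,\Lambda_D)$ and $\dim(Gr(M)\cap\Lambda_N)$ are \emph{additive} over a symplectically orthogonal, $M$-invariant decomposition of $\mathbf{C}^{2n}$ into spectral blocks. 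The triple index is an invariant of the relative position of \emph{three} Lagrangians, and the fixed boundary Lagrangians $\Lambda_N=\Lambda^n_N\oplus\Lambda^n_N$, $\Lambda_D=\Lambda^n_D\oplus\Lambda^n_D$ do not split along the $M$-invariant blocks: already a single eigenvector $(x,y)^T$ with $x\neq0\neq y$ generates a block whose intersections with $\Lambda^n_N$ and $\Lambda^n_D$ are trivial, so $\Lambda_N\neq\bigoplus_i\bigl(\Lambda_N\cap(V_i\oplus V_i)\bigr)$ and a ``per-block triple-index contribution'' is not even well defined, let alone additive. Without this, the block-by-block strategy has no foundation. Moreover, the per-block inertia statements that would carry the whole estimate (negative-definiteness on a $2\dim\ker(M-\lambda I)$-dimensional subspace for off-circle quadruples, contribution equal to the geometric multiplicity for non-real unit-circle pairs, the absorption claim at $\pm1$) are asserted as expectations rather than proved, and they are precisely the nontrivial content; they also conflate the Krein form of $M$ with the form $\mathfrak{Q}(Gr(\gamma(T)),\Lambda_N;\Lambda_D)$, whose dependence on the boundary Lagrangians is what the theorem is about.

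The paper's proof needs neither a spectral decomposition nor any signature computation. It works globally with the single Hermitian form $\mathfrak{Q}(Gr(\gamma(T)),\Lambda_N;\Lambda_D)$ on $Gr(\gamma(T))\cap(\Lambda_N+\Lambda_D)$: lifting each eigenvector $u_\lambda=(x,y)^T$ to $z_\lambda\in Gr(\gamma(T))\cap(\Lambda_N+\Lambda_D)$, the brake relations \eqref{eq:property of gamma(T)}--\eqref{eq:conditions of eigenvector of gamma(T)} give $\mathfrak{Q}(z_{\lambda_1},z_{\lambda_2})=(1-\lambda_1\bar{\lambda}_2)\langle x_1,y_2\rangle$ together with the trichotomy of Lemma \ref{cor:fundamentle condition of Q-orthogonal}; hence the span of the $z_\lambda$ with $\lambda\in\sigma(\gamma(T))\cap(\mathbf{C}^\perp\cup\{\pm1\})$ is $\mathfrak{Q}$-isotropic, and for $\lambda\in\mathbf{C}^\perp$ it meets $Gr(\gamma(T))\cap\Lambda_N$ trivially. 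The elementary bound of Lemma \ref{lem:principle of istropic subspace} (with the splitting $\ker\mathfrak{Q}=(Gr(\gamma(T))\cap\Lambda_N)\oplus(Gr(\gamma(T))\cap\Lambda_D)$), combined with \eqref{eq:compute the triple index} and Theorem \ref{thm:difference of two morse index}, then yields both \eqref{eq:estimate of eigenvalues not real1} and \eqref{eq:estimate of eigenvalues not real}. If you want to salvage your outline, you would have to either prove the compatibility of your blocks with $\Lambda_N,\Lambda_D$ (which fails in general) or replace the blockwise bookkeeping by an explicit isotropic subspace argument of this kind.
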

Please note that, if  $m^-(\Lambda_N)=m^-(\Lambda_D)$, then $\sigma(\gamma(T))\subset\mathbf{R}$.
As a corollary of Theorem \ref{thm:stability of brake orbit1}, we have the next estimation.
\begin{thm}\label{thm:stability of brake orbit}
For a given brake orbit $x$ of system \eqref{eq:S-L equation}, we assume  $m^-(x,\Lambda_P)=k$, then for the monodromy matrix $\gamma(T)$ we have
\begin{equation}\label{eq:estimate of eigenvalues not real2}
\dim\bigoplus_{\lambda\in\sigma(\gamma(T))\cap\mathbf{C}^\perp}\ker(\gamma(T)-\lambda I_{2n})\leq2k.
\end{equation}
\end{thm}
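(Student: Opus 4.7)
The plan is to deduce Theorem~\ref{thm:stability of brake orbit} directly from Theorem~\ref{thm:stability of brake orbit1} by a brake-symmetry decomposition argument. Since Theorem~\ref{thm:stability of brake orbit1} yields
\[
\dim \bigoplus_{\lambda \in \sigma(\gamma(T)) \cap \mathbf{C}^\perp} \ker(\gamma(T) - \lambda I_{2n}) \leq m^-(\Lambda_N) - m^-(\Lambda_D),
\]
it suffices to establish the inequality $m^-(\Lambda_N) - m^-(\Lambda_D) \leq 2\, m^-(\Lambda_P)$.

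To this end, I would introduce the involution $\sigma$ on $W^{2,2}([0,T];\mathbf{C}^n)$ defined by $(\sigma x)(t) = x(T-t)$. Combining the brake identities $P(-t)=P(t)$, $Q(-t)=-Q(t)$, $R(-t)=R(t)$ with $T$-periodicity gives $P(T-t)=P(t)$, $Q(T-t)=-Q(t)$, $R(T-t)=R(t)$, and in particular $Q(0)=Q(T)=0$. A change-of-variables computation then shows that the quadratic form associated with $\mathcal A$ is $\sigma$-invariant; hence $E_{\Lambda_P}$, $E_{\Lambda_N}$ and $E_{\Lambda_D}$ are all $\sigma$-stable and decompose into $\pm 1$-eigenspaces $E^\pm$ that are invariant under $\mathcal A$. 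The decisive structural identities are
\[
E_{\Lambda_P}^{+} = E_{\Lambda_N}^{+}, \qquad E_{\Lambda_P}^{-} = E_{\Lambda_D}^{-},
\]
because a $\sigma$-even $x$ satisfies $x(0)=x(T)$ and $\dot x(0)=-\dot x(T)$, so imposing either the periodic condition or the Neumann condition (which reduces to $\dot x(0)=\dot x(T)=0$ thanks to $Q(0)=Q(T)=0$) forces the same relation $\dot x(0)=\dot x(T)=0$; the $\sigma$-odd argument is symmetric.

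Setting $M_\pm := m^-(\mathcal A|_{E_{\Lambda_P}^\pm})$, $N_- := m^-(\mathcal A|_{E_{\Lambda_N}^-})$ and $D_+ := m^-(\mathcal A|_{E_{\Lambda_D}^+})$, the decomposition gives
\[
m^-(\Lambda_P) = M_+ + M_-, \quad m^-(\Lambda_N) = M_+ + N_-, \quad m^-(\Lambda_D) = D_+ + M_-,
\]
and each $\sigma$-eigenspace is naturally identified with a half-interval Sturm--Liouville problem on $[0, T/2]$ with separated boundary conditions: $M_+$ is Neumann--Neumann, $M_-$ is Dirichlet--Dirichlet, $N_-$ is Neumann--Dirichlet and $D_+$ is Dirichlet--Neumann. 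Applying Theorem~\ref{thm: morse index thm gene} to each half-interval problem with fundamental solution $\gamma(T/2)$, and using that $i(\alpha,\beta,\beta)=0$ for Lagrangian subspaces of equal dimension, one obtains
\[
N_- = N^\ast,\quad M_+ = N^\ast + i\bigl(\gamma(T/2)\Lambda_N^n, \Lambda_N^n, \Lambda_D^n\bigr),\quad D_+ = M_- + i\bigl(\gamma(T/2)\Lambda_D^n, \Lambda_N^n, \Lambda_D^n\bigr),
\]
where $N^\ast := \sum_{0 < t_0 < T/2} \dim(\gamma(t_0)\Lambda_N^n \cap \Lambda_D^n)$.

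The nonnegativity of the triple index (inequality (1.8)) then yields $M_+ \geq N_-$ and $D_+ \geq M_-$, so $N_- - D_+ \leq M_+ - M_-$, and hence
\[
m^-(\Lambda_N) - m^-(\Lambda_D) = (M_+ - M_-) + (N_- - D_+) \leq 2(M_+ - M_-) \leq 2 M_+ \leq 2(M_+ + M_-) = 2\, m^-(\Lambda_P),
\]
which completes the argument. I expect the main technical step to be verifying the $\sigma$-invariance of the Morse form and especially the identifications $E_{\Lambda_P}^\pm = E_{\Lambda_{N/D}}^\pm$, since these depend crucially on the brake-forced vanishing $Q(0)=Q(T)=0$; once these are in place, the nonnegativity of the triple index together with the separated-boundary Morse index formula of Theorem~\ref{thm: morse index thm gene} closes the argument immediately.
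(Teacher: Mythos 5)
Your proposal is correct and takes essentially the same route as the paper: the paper likewise reduces \eqref{eq:estimate of eigenvalues not real2} to the bound $m^-(\Lambda_N)-m^-(\Lambda_D)\le 2k$ and then applies \eqref{eq:estimate of eigenvalues not real} of Theorem \ref{thm:stability of brake orbit1}, using exactly your half-period splitting $k=m^-_{[0,T/2]}(\Lambda_N)+m^-_{[0,T/2]}(\Lambda_D)$, $m^-(\Lambda_N)=m^-_{[0,T/2]}(\Lambda_N)+m^-_{[0,T/2]}(\Lambda^n_N\oplus\Lambda^n_D)$, $m^-(\Lambda_D)=m^-_{[0,T/2]}(\Lambda_D)+m^-_{[0,T/2]}(\Lambda^n_D\oplus\Lambda^n_N)$, except that it imports this decomposition from \cite[Theorem 2]{HPY} rather than re-deriving it from the $\sigma$-even/odd decomposition as you do. Your explicit comparison of the mixed and pure half-interval indices via Theorem \ref{thm: morse index thm gene} and the nonnegativity of the triple index is a correct elaboration of the inequality the paper dismisses as obvious.
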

In 1987, Offin proved the non-degenerate brake orbit with zero Morse index is hyperbolic \cite{Off}, some related results can be found  in \cite{BT}.  Recently,  Ure\~{n}a  proved  \cite[Theorem 1.1]{Ur} that  all the eigenvalues of
$\gamma(T)$ are not only real but also positive. Inequality \eqref{eq:estimate of eigenvalues not real2} can be considered as a generalization of their results.   In the case $m^-(x,\Lambda_P)=0$, we can get Ure\~{n}a's result by combining with other simple discussions,  the details will be given in Section $4$.

This  paper is organized as follows, in Section $2$, we briefly review the index theory for Lagrangian subspace,  and  we prove the Morse index theorem in  Section $3$, at last,we prove the stability theorem of brake orbits.

\section{Index Theory for the Lagrangian subspaces}

In this section, we briefly review the theory about Maslov index, H\"{o}rmander index, triple index and their relations.

In Hamiltonian system theory, Maslov index is an important topological characterization. About this theory, readers are referred to \cite{Ar}, \cite{D},  \cite{RS1} and so on.
 Denote $\Lambda^{\pm}=\ker(iJ\mp I_{2n})$, then $\Lambda$ is Lagrangian if and only if $\Lambda$ can be expressed as a graph of a unitary operator
 $U:\Lambda^{+}\rightarrow \Lambda^{-}$. So we can define a homeomorphic (isomorphic) map $f:Lag(\mathbf{C}^{2n},\omega)\rightarrow \mathbf{U}(n)$.
     In fact, $f$ can be defined in the following way.
  Denote the conjugate transpose of a complex matrix $X$ by $X^*$. Recall that the Lagrangian frame of a given Lagrangian subspace $\Lambda$ is an injective linear map $\mathcal{Z}:\mathbf{C}^{n}\rightarrow \Lambda$ with the form $\mathcal{Z}=\begin{bmatrix}X\\Y \end{bmatrix}$, where $X,Y$ are $n\times n$ complex matrices such that $X^*Y=Y^*X$ and $rank(\mathcal{Z})=n$. To be convenient, we will denote Lagrangian subspace only by its Lagrangian frame later. $$ f(\mathcal{Z})=(X-iY)(X+iY)^{-1}. $$
  Note that
$$
\dim\ (\Lambda_{1}\cap \Lambda_{2})=\dim \ \ker(f(\Lambda_{2})^{-1}f(\Lambda_{1})-I_{n}).
$$
Then for any fixed $U_{0}\in \mathbf{U}(n)$, we can define the singular cycle $\Sigma_{U_{0}}$ of $U_{0}$ as
$$
 \Sigma_{U_{0}}=\{U\in \mathbf{U}(n)\mid \det(U_{0}^{-1}U-I_{n})=0\}.
$$
 Let $U_{t}(t\in [a,b])$ be any path in $\mathbf{U}(n)$. By the proper small perturbation
 $e^{is}U_{t}(|s|\leq\varepsilon)$, for any fixed $t_{0}\in [a,b]$, we have the path $e^{is}U_{t_{0}}$ is transversal to $\Sigma_{U_{0}}$ and for
  fixed small enough $s_{0}$, there holds $e^{-is_{0}}U_{a}$ and $e^{-is_{0}}U_{b}$ are all not in the singular cycle of $U_{0}$. So the intersection number
  $[e^{-is_{0}}U_{t}:\Sigma_{U_{0}}]$ is well defined. Then we can introduce the following definition:
 \begin{defi}\label{defi2.1}
Let $\Lambda(t)$ be a path in $Lag(\mathbf{C}^{2n})$ and $\Lambda_{0}\in Lag(\mathbf{C}^{2n})$, then the Maslov index is defined as
\begin{equation}
\mu(\Lambda_{0},\Lambda(t)):=[e^{-is_{0}}f(\Lambda(t)): \Sigma_{f(\Lambda_{0})}].
\end{equation}
\end{defi}

\cite{RS1} gives an effective way to compute the Maslov index $\mu(\Lambda_{0},\Lambda(t))$ by using crossing form. For the $C^1$-Lagrangian path $\{\Lambda(t), t\in[0,T]\}$, then $t_{0}$ is called a crossing if $\Lambda(t_{0})\cap \Lambda_{0}\neq{0}$. Let $v$ be any vector in $\Lambda(t_{0})\cap \Lambda_{0}$ and $V_{t_{0}}$ be a fixed Lagrangian subspace which is transversal to
$\Lambda(t_{0})$. For small $t$, the crossing form is defined by
$$
\Gamma(\Lambda(t_{0}),\Lambda_{0},t_{0})(v)=\frac{d}{dt}\mid_{t=t_{0}}\omega(v,u(t))
$$
where $u(t)\in V_{t_{0}}$ such that $v+u(t)\in\Lambda(t)$ and the form is independent of the choice of $V_{t_{0}}$. For the special Lagrangian path $\Lambda(t)=\gamma(t)W$, where $\gamma(t)\in \Sp(2n)$ and $W$ is a fixed Lagrangian subspace, then the
crossing form is $\langle-\gamma(t)^{T}J\dot{\gamma}(t)v,v\rangle$ for $v\in \gamma(t)^{-1}(\Lambda(t)\cap W)$.

A crossing is called regular if the crossing form is non-degenerate. For every $C^{1}$ path
 with fixed endpoints, we can make sure that all the crossings are regular by small perturbation. Then following $\cite{LZu}$ we have
 \begin{equation}\label{eq:maslov index formula}
\mu(\Lambda_{0},\Lambda(t))=m^{+}(\Gamma(\Lambda(0),\Lambda_{0},0))+\sum_{t\in \mathcal{S}} \sgn(\Gamma(\Lambda(t),\Lambda_{0},t))-
m^{-}(\Gamma(\Lambda(T),\Lambda_{0},T)),
\end{equation}
where $\mathcal{S}$ is the set of all crossings and $m^{+},m^{-}$ are the dimensions of positive and negative subspaces respectively.

There is another important index related to the Maslov index, namely, the H\"{o}rmander index (See \cite{RS1}). Let $V_{0},V_{1},\Lambda_{0},\Lambda_{1}$ be four Lagrangian subspaces and $\{\Lambda(t),t\in[0,T]\}$
is any Lagrangian path such that $\Lambda_{0}=\Lambda(0),\Lambda_{1}=\Lambda(T)$, then there is the following definition:
\begin{defi}\label{defi2.5}
Let $V_{0},V_{1},\Lambda_{0},\Lambda_{1}$ and $\Lambda(t)$ be as above, the H\"{o}rmander index is defined to be
\begin{equation}
s(\Lambda_{0},\Lambda_{1};V_{0},V_{1}):=\mu(V_{1},\Lambda(t))-\mu(V_{0},\Lambda(t)).
\end{equation}
\end{defi}
Please note that the symbol $s(\Lambda_{0},\Lambda_{1};V_{0},V_{1})$ here is a little different from the original one in \cite{RS1} in order to correspond to \cite[Definition 3.9]{ZWZ}.  One can check that the above definition is independent on the choice of path $\Lambda(t)$, so it's well defined. Recently, in \cite{ZWZ}, the second author and his collaborators studied the H\"{o}rmander index in the finite dimensional case by using triple index.

 Now we will introduce the definition of triple index. Generally, let $\alpha, \beta$ and $\delta$ be three isotropic subspaces of complex symplectic vector space $(V,\omega)$, then define the form $\mathfrak{Q}:=\mathfrak{Q}(\alpha, \beta; \delta)$ on $\alpha\cap(\beta+\delta)$ by
 \begin{equation}\label{eq:the form Q}
\mathfrak{Q}(x_1,x_2)=\omega(y_1, z_2),
\end{equation}
where $x_j=y_j+z_j \in \alpha\cap(\beta+\delta)$ and $y_j\in\beta, z_j\in \delta$ for $j=1,2$.
From \cite{ZWZ}, when $\alpha, \beta, \delta$ are Lagrangian subspaces, we have
\bea \ker \mathfrak{Q}(\alpha, \beta, \delta)= \alpha\cap\beta+\alpha\cap\delta.\label{Qker} \eea

Moreover, we need the following lemma \cite[Lemma 3.2]{ZWZ}:
\begin{lem}\label{lem:change order of form Q}
For three isotropic subspaces $\alpha, \beta, \delta$, let $\mathfrak{Q}_1:=\mathfrak{Q}(\alpha, \beta; \delta), \mathfrak{Q}_2:=\mathfrak{Q}(\beta, \delta; \alpha) $ and $\mathfrak{Q}_3:=\mathfrak{Q}(\delta, \alpha; \beta)$, then we have
$$
m^{\pm}(\mathfrak{Q}_1)=m^{\pm}(\mathfrak{Q}_2)=m^{\pm}(\mathfrak{Q}_3).
$$
\end{lem}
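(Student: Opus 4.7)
The plan is to exploit the cyclic symmetry of the statement: it suffices to produce a linear isomorphism between the reduced domains of $\mathfrak{Q}_1$ and $\mathfrak{Q}_2$ that pulls one Hermitian form back to the other, since the same construction applied to the cyclic shift $(\alpha,\beta,\delta)\mapsto(\beta,\delta,\alpha)$ then yields $m^\pm(\mathfrak{Q}_2)=m^\pm(\mathfrak{Q}_3)$ and closes the loop.

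The natural candidate is the map $\phi\colon\alpha\cap(\beta+\delta)\to\beta\cap(\delta+\alpha)$ sending $x=y+z$ (with $y\in\beta$, $z\in\delta$) to $\phi(x)=y$. Since any two decompositions of a given $x$ differ by an element of $\beta\cap\delta$, $\phi$ is well defined modulo $\beta\cap\delta$. By \eqref{Qker} the kernels are $\ker\mathfrak{Q}_1=\alpha\cap\beta+\alpha\cap\delta$ and $\ker\mathfrak{Q}_2=\beta\cap\delta+\beta\cap\alpha$, and a routine check shows $\phi$ descends to a linear map $\bar\phi$ between the quotients by these kernels. Surjectivity is immediate: for $y\in\beta\cap(\delta+\alpha)$, write $y=z+a$ with $a\in\alpha$, $z\in\delta$; then $a=y-z\in\alpha\cap(\beta+\delta)$ and $\bar\phi([a])=[y]$. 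Injectivity reduces to verifying that if $\phi(x)\in\beta\cap\delta+\beta\cap\alpha$ then $x\in\alpha\cap\beta+\alpha\cap\delta$, which is a short chase splitting $y$ into its two components.

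The crux is the identity $\bar\phi^{\ast}\mathfrak{Q}_2=\mathfrak{Q}_1$. Applying the isotropy of $\alpha,\beta,\delta$ to $0=\omega(x_1,x_2)=\omega(y_1+z_1,y_2+z_2)$ yields the identity $\omega(y_1,z_2)=-\omega(z_1,y_2)$ (which incidentally confirms that $\mathfrak{Q}_1$ is Hermitian). For the other side, view $y_j=(-z_j)+x_j$ as the decomposition of $\phi(x_j)$ in $\delta+\alpha$; then
$$\mathfrak{Q}_2(\phi(x_1),\phi(x_2))=\omega(-z_1,x_2)=-\omega(z_1,y_2+z_2)=-\omega(z_1,y_2),$$
the last equality using isotropy of $\delta$. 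Comparing gives $\mathfrak{Q}_1=\bar\phi^{\ast}\mathfrak{Q}_2$, so their signatures coincide. The main obstacle is bookkeeping the isotropy cancellations so that every vanishing term is accounted for cleanly; once the map $\phi$ is identified, the rest reduces to routine linear algebra together with the cyclic shift.
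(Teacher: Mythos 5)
Your argument is correct, and it is worth noting that the paper itself gives no proof of this lemma at all: it is quoted from \cite[Lemma 3.2]{ZWZ}, so your write-up supplies a self-contained argument where the paper only cites. The core of your proof is sound: the map $\phi(x)=y$ for $x=y+z$ is well defined up to $\beta\cap\delta$, the chase showing $\phi(x)\in\beta\cap\delta+\beta\cap\alpha\Rightarrow x\in\alpha\cap\beta+\alpha\cap\delta$ (and its converse, plus surjectivity) goes through for merely isotropic subspaces, and the computation $\mathfrak{Q}_2(\phi(x_1),\phi(x_2))=\omega(-z_1,x_2)=-\omega(z_1,y_2)=\omega(y_1,z_2)=\mathfrak{Q}_1(x_1,x_2)$ is exactly the isotropy bookkeeping needed; the cyclic shift then finishes. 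One caveat: you invoke \eqref{Qker} to identify the kernels, but the paper states \eqref{Qker} only for \emph{Lagrangian} $\alpha,\beta,\delta$, and it is false for general isotropic triples (e.g.\ $\alpha=\mathrm{span}(e_1+e_2)$, $\beta=\mathrm{span}(e_1)$, $\delta=\mathrm{span}(e_2)$ in a symplectic basis $e_1,e_2,f_1,f_2$ gives $\ker\mathfrak{Q}_1=\alpha$ while $\alpha\cap\beta+\alpha\cap\delta=\{0\}$). This does not break your proof, because all you actually need are the inclusions $\alpha\cap\beta+\alpha\cap\delta\subseteq\ker\mathfrak{Q}_1$ and $\beta\cap\alpha+\beta\cap\delta\subseteq\ker\mathfrak{Q}_2$ (true in general, by the Hermitian property of $\mathfrak{Q}$), since $m^{\pm}$ is unchanged when passing to the quotient by any subspace of the kernel; alternatively, your pullback identity alone yields the two inequalities $m^{\pm}(\mathfrak{Q}_1)\ge m^{\pm}(\mathfrak{Q}_2)$ and $m^{\pm}(\mathfrak{Q}_2)\ge m^{\pm}(\mathfrak{Q}_1)$ directly, with no mention of kernels. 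So rephrase that one step (quotient by the stated sums of intersections, not by ``the kernels''), and the proof is complete in the generality claimed by the lemma.
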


Now by equation $(2.6)$ of \cite{D}, the triple index is well-defined as following:
\begin{defi}\label{defi:triple index}
Let $\alpha, \beta$ and $\kappa$ be three Lagrangian subspaces of complex symplectic vector space $(V,\omega)$, then the triple index of $\alpha, \beta, \kappa$ is defined by
\begin{equation}\label{eq:the triple index}
i(\alpha, \beta, \kappa)=m^-(\mathfrak{Q}(\alpha, \delta; \beta))+m^-(\mathfrak{Q}(\beta, \delta; \kappa))-m^-(\mathfrak{Q}(\alpha, \delta; \kappa)),
\end{equation}
where $\delta$ is a Lagrangian subspace such that $\delta\cap\alpha=\delta\cap\beta=\delta\cap\kappa=\{0\}$.
\end{defi}
An equivalent definition can be given as follows \bea  i(\alpha, \beta, \kappa)=m^+(\mathfrak{Q}(\alpha, \beta; \kappa))+\dim(\alpha\cap\kappa)-\dim(\alpha\cap\beta\cap\kappa)),\label{trip1} \eea where $m^+$ denotes the dimension of maximal positive definite subspace which $\mathfrak{Q}$ acts on.
It follows that $$ i(\alpha,\beta,\kappa)\geq 0. $$
The triple index $i(\alpha, \beta, \kappa)$ can be calculated and estimated by \cite[Lemma 3.13]{ZWZ}:
\begin{equation}\label{eq:compute the triple index}
\begin{aligned}
i(\alpha, \beta, \kappa)&=m^+(\mathfrak{Q}(\alpha, \beta; \kappa))+\dim(\alpha\cap\kappa)-\dim(\alpha\cap\beta\cap\kappa))\\
&\leq \dim \alpha-\dim(\alpha\cap\beta)-\dim(\beta\cap\kappa)+\dim(\alpha\cap\beta\cap\kappa)\\
&=\dim \alpha -\dim(\alpha\cap\beta+\beta\cap\kappa),
\end{aligned}
\end{equation}

In particular, we have \cite[Corollary 3.14]{ZWZ}
\begin{equation}\label{eq:particular triple index}
i(\alpha, \alpha, \beta)=i(\beta, \alpha, \alpha)=0, \quad i(\alpha, \beta, \alpha)=\dim \alpha-\dim(\alpha\cap\beta).
\end{equation}
For four given Lagrangian subspaces $\lambda_1, \lambda_2, \kappa_1, \kappa_2$ of complex symplectic vector space $(V,\omega)$, the main theorem $1.1$ of \cite{ZWZ} gives an efficient way to compute the H\"{o}rmander index $s(\lambda_1, \lambda_2; \kappa_1, \kappa_2)$ by
\begin{equation}\label{eq:the Hormander index computed by triple index}
  s(\lambda_1, \lambda_2; \kappa_1, \kappa_2)=i(\lambda_1, \lambda_2, \kappa_2)-i(\lambda_1, \lambda_2, \kappa_1)
  =i(\lambda_1, \kappa_1, \kappa_2)-i(\lambda_2, \kappa_1, \kappa_2).
\end{equation}

In fact, the Maslov index can be expressed by the triple index.
\begin{lem} For $\Lambda, \Lambda_0, V(t),t\in[a,b]$ are Lagrangian subspaces. If $V(t)$ transversal to $\Lambda$  for $t\in[a,b]$, then  \bea  \mu(\Lambda_0, V(t);[a,b])=i(V(b),\Lambda_0, \Lambda)-i(V(b),\Lambda_0, \Lambda). \eea
\end{lem}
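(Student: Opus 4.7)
The identity as typed has $i(V(b),\Lambda_0,\Lambda)$ appearing twice on the right, so I will prove the natural intended form
$$\mu(\Lambda_0, V(t); [a,b]) \;=\; i(V(b),\Lambda_0,\Lambda) - i(V(a),\Lambda_0,\Lambda).$$

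The plan is to run the argument entirely through the H\"ormander index machinery already recalled in Section~2, interpreting $V(t)$ as a Lagrangian path from $V(a)$ to $V(b)$ and treating $\Lambda_0,\Lambda$ as the two reference Lagrangians. First I will apply Definition~2.5 with $\lambda_1=V(a)$, $\lambda_2=V(b)$, $\kappa_1=\Lambda_0$, $\kappa_2=\Lambda$, and the path $\{V(t)\}_{t\in[a,b]}$ as the admissible connecting path, obtaining
$$s(V(a),V(b);\Lambda_0,\Lambda) \;=\; \mu(\Lambda, V(t);[a,b]) - \mu(\Lambda_0, V(t);[a,b]).$$
The transversality hypothesis $V(t)\cap\Lambda=\{0\}$ for every $t\in[a,b]$ means the path has no crossings against $\Lambda$, so by the crossing-form formula (2.4) (or directly from Definition~2.1, since $f\circ V$ stays outside the singular cycle $\Sigma_{f(\Lambda)}$) the first Maslov term vanishes: $\mu(\Lambda,V(t);[a,b])=0$.

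Next I will invoke the second equality in (2.12), which gives the same H\"ormander index a triple-index expression:
$$s(V(a),V(b);\Lambda_0,\Lambda) \;=\; i(V(a),\Lambda_0,\Lambda) - i(V(b),\Lambda_0,\Lambda).$$
Equating the two expressions for $s(V(a),V(b);\Lambda_0,\Lambda)$ and solving for $\mu(\Lambda_0,V(t);[a,b])$ produces the claimed identity.

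I do not foresee any serious obstacle: the argument is a two-line chain through the results already assembled in Section~2. The only point requiring verification is that a path transversal to the reference Lagrangian contributes zero Maslov index, which is immediate from the crossing-form formula since the sum there is indexed over crossings and the crossing set is empty under the transversality hypothesis.
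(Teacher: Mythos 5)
Your proof is correct and follows essentially the same route as the paper's: subtract the Maslov index $\mu(\Lambda,V(t);[a,b])$, which vanishes by transversality, identify the difference as the H\"ormander index via Definition \ref{defi2.5}, and convert it to triple indices via \eqref{eq:the Hormander index computed by triple index}. Your corrected right-hand side $i(V(b),\Lambda_0,\Lambda)-i(V(a),\Lambda_0,\Lambda)$ agrees with the paper's concluding expression $i(V(a),\Lambda,\Lambda_0)-i(V(b),\Lambda,\Lambda_0)$, since swapping the last two arguments of $s$ reverses its sign, so your reading of the statement's typo is consistent with the paper's proof.
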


\begin{proof}

By easy computation
  \bea  \mu(\Lambda_0, V(t);[a,b]) &=&  \mu(\Lambda_0, V(t);[a,b])-  \mu(\Lambda, V(t);[a,b]) \nonumber \\  &=& s(V(a),V(b); \Lambda, \Lambda_0) \nonumber \\  &=& i(V(a),\Lambda, \Lambda_0)-i(V(b),\Lambda, \Lambda_0).   \eea

\end{proof}

 In order to compute $i(Gr(\gamma(T)), \Lambda_0, \Lambda_D)$, we firstly change the basis of  $(\mathbf{C}^{2n}\oplus\mathbf{C}^{2n},-\omega\oplus\omega)$ such that the symplectic structure  with the standard form $J_{2n}$.
  In fact, 
 let $S=\begin{bmatrix} -I_{n}&0&0&0\\0&0& I_{n}&0\\0&I_{n}&0&0\\0&0&0&I_{n} \end{bmatrix}$, 
 then $$S^{T}\mathcal{J}S=J_{2n} .$$
$\Lambda_0$ can be expressed more explicitly.
 Note that the boundary condition \eqref{eq:original general boundary condition} is equivalent to
 $$(-y(0),y(T),x(0),x(T))^T\in \Lambda_0$$ under the new basis.
  According to the splitting $\mathbf{C}^{2n}\cong \Lambda_N=V\oplus V^\perp$, we can split $(-y(0), y(T))^T=(-y_1(0), y_1(T))^T+(-y_2(0), y_2(T))^T$, where $(-y_1(0), y_1(T))^T\in J_{2n} V^\perp$ and $(-y_2(0), y_2(T))^T\in J_{2n} V$, then we have a matrix $A$ from $V$ to $J_{2n}V$ such that $(-y_2(0), y_2(T))^T=A(x(0),x(T))^T$. Since $\Lambda_0$ is Lagrangian, it's easy to check that $A$ is Hermitian. Now we can choose a suitable bases of $\mathbf{C}^{2n}\oplus\mathbf{C}^{2n}$ such that
  \begin{equation}\label{eq:precise express of boundary condition}
  \Lambda_0=\begin{bmatrix}I_k&0\\0&A\\0&0\\0&I_{2n-k}\end{bmatrix}
  \end{equation}
   under the symplectic form $J_{2n}$. The left column corresponds to $\Lambda_0\cap\Lambda_D$ and the right column corresponds to $\{(Au,u)^T \ | \ u\in V(\Lambda_0)\}$.
Let $\gamma(t)=\begin{bmatrix}D_1(t) &D_2(t)\\D_3(t)&D_4(t) \end{bmatrix}$ be the fundamental solution of  \eqref{eq:Hamilton system},
then $\begin{bmatrix} -I&0\\D_1&D_2\\0&I\\D_3&D_4 \end{bmatrix}$ is a frame of $Gr(\gamma)$ under the symplectic form $J_{2n}$.

Next we will give a simple but useful lemma.
\begin{lem}\label{lem:lemma for computing any symplectic matrix boundary condition}
For  $M\in\Sp(2n)$ and $\Lambda_i\in Lag(\mathbf{C}^{2n}, \omega)$ with $\omega(x,y)=\langle Jx,y\rangle$, then $\Lambda_i\oplus\Lambda_j$ is a Lagrangian subspace of $(\mathbf{C}^{4n},-\omega\oplus\omega)$ for $i,j=1,\cdots, 3$.  We have
\begin{equation}\label{eq:any symplectic matrix boundary condition 1}
i(Gr(M), \Lambda_1\oplus\Lambda_2, \Lambda_1\oplus\Lambda_3)=i(M\Lambda_1, \Lambda_2, \Lambda_3).
\end{equation}
\begin{equation}\label{eq:any symplectic matrix boundary condition 1.1}
i(Gr(M), \Lambda_1\oplus\Lambda_2, \Lambda_3\oplus\Lambda_2)=i(M^{-1}\Lambda_2, \Lambda_1, \Lambda_3 ,-\omega),
\end{equation}
 where the triple index in the  right of \eqref{eq:any symplectic matrix boundary condition 1.1} is defined on $(\mathbf{C}^{2n},-\omega)$.
 \end{lem}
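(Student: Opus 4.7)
The strategy is to reduce everything to the quadratic form characterization of the triple index given in \eqref{trip1}, namely
\[
 i(\alpha,\beta,\kappa)=m^+\bigl(\mathfrak{Q}(\alpha,\beta;\kappa)\bigr)+\dim(\alpha\cap\kappa)-\dim(\alpha\cap\beta\cap\kappa),
\]
and to identify the form $\mathfrak{Q}$ on $Gr(M)\cap(\beta+\kappa)$ with an analogous form on $\mathbf{C}^{2n}$ under a natural linear isomorphism. The key observation is that $Gr(M)$ projects isomorphically onto either factor of $\mathbf{C}^{2n}\oplus\mathbf{C}^{2n}$, so one of the two coordinates of a decomposition is free while the other is determined.

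For \eqref{eq:any symplectic matrix boundary condition 1} I would take $\alpha=Gr(M)$, $\beta=\Lambda_1\oplus\Lambda_2$, $\kappa=\Lambda_1\oplus\Lambda_3$, and use the projection onto the second factor: $\phi:(a,Ma)\mapsto Ma$ identifies
\[
Gr(M)\cap(\beta+\kappa)=\{(a,Ma):a\in\Lambda_1,\ Ma\in\Lambda_2+\Lambda_3\}\ \cong\ M\Lambda_1\cap(\Lambda_2+\Lambda_3).
\]
For an element $(a,Ma)$ with $Ma=b+c$, $b\in\Lambda_2$, $c\in\Lambda_3$, the natural decomposition is $(a,Ma)=(a,b)+(0,c)$ with $(a,b)\in\beta$ and $(0,c)\in\kappa$. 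Plugging into the defining formula for $\mathfrak{Q}$ with the symplectic form $\Omega=-\omega\oplus\omega$ yields
\[
\mathfrak{Q}(\alpha,\beta;\kappa)\bigl((a_1,Ma_1),(a_2,Ma_2)\bigr)=-\omega(a_1,0)+\omega(b_1,c_2)=\omega(b_1,c_2),
\]
which is exactly $\mathfrak{Q}(M\Lambda_1,\Lambda_2;\Lambda_3)$ pulled back by $\phi$. Hence $m^+$ coincides. The dimensional terms match trivially, since $Gr(M)\cap(\Lambda_1\oplus\Lambda_3)\cong M\Lambda_1\cap\Lambda_3$ and $Gr(M)\cap\beta\cap\kappa\cong M\Lambda_1\cap\Lambda_2\cap\Lambda_3$, which yields \eqref{eq:any symplectic matrix boundary condition 1}.

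For \eqref{eq:any symplectic matrix boundary condition 1.1} the roles of the two copies of $\mathbf{C}^{2n}$ reverse: now $\Lambda_2$ sits in both $\beta$ and $\kappa$, so the projection onto the \emph{first} factor $\psi:(a,Ma)\mapsto a$ is the right isomorphism, giving
\[
Gr(M)\cap\bigl((\Lambda_1+\Lambda_3)\oplus\Lambda_2\bigr)\ \cong\ (\Lambda_1+\Lambda_3)\cap M^{-1}\Lambda_2.
\]
For such an $a=a_1+a_3$ (with $a_1\in\Lambda_1$, $a_3\in\Lambda_3$, $Ma\in\Lambda_2$) the decomposition is $(a,Ma)=(a_1,Ma)+(a_3,0)$, and the resulting $\mathfrak{Q}$ evaluates to
\[
-\omega(a_1^{(1)},a_3^{(2)})+\omega(Ma^{(1)},0)=-\omega(a_1^{(1)},a_3^{(2)}),
\]
which is precisely $\mathfrak{Q}(M^{-1}\Lambda_2,\Lambda_1;\Lambda_3)$ computed in the symplectic space $(\mathbf{C}^{2n},-\omega)$. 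The appearance of the $-\omega$ here — rather than $\omega$ — is forced by the minus sign in the first factor of $-\omega\oplus\omega$, and explains the notation in the statement. Pairing this with $Gr(M)\cap(\Lambda_3\oplus\Lambda_2)\cong M^{-1}\Lambda_2\cap\Lambda_3$ and $Gr(M)\cap\beta\cap\kappa\cong M^{-1}\Lambda_2\cap\Lambda_1\cap\Lambda_3$ completes \eqref{eq:any symplectic matrix boundary condition 1.1} via \eqref{trip1}.

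The only subtle point, and where I expect the main bookkeeping difficulty, is the sign discrepancy in the second identity: one must be careful that the cross term $-\omega$ in $\Omega=-\omega\oplus\omega$ is the one that survives (not $+\omega$), which in turn forces the ambient form on the right-hand side of \eqref{eq:any symplectic matrix boundary condition 1.1} to be $-\omega$. Once the decompositions are set up as above, this is automatic, but it is essential to pick the decomposition consistent with the order $\beta$ first, $\kappa$ second in the definition of $\mathfrak{Q}$.
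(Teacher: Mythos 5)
Your proposal is correct and follows essentially the same route as the paper: compute the form $\mathfrak{Q}$ from its definition on $Gr(M)\cap(\beta+\kappa)$, identify it with the corresponding form on $\mathbf{C}^{2n}$ via the graph projection, match the dimension terms, and conclude with \eqref{trip1}. The only difference is that you also write out the details (including the $-\omega$ sign bookkeeping) for \eqref{eq:any symplectic matrix boundary condition 1.1}, which the paper omits as ``totally similar,'' and your use of a specific decomposition is legitimate since $\mathfrak{Q}$ is independent of the choice of decomposition.
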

\begin{proof}
We only give details to prove \eqref{eq:any symplectic matrix boundary condition 1}.  In fact, for every $z=(u,Mu)^T\in Gr(M)\cap(\Lambda_1\oplus\Lambda_2+\Lambda_1\oplus\Lambda_3)$, there exist
$u_1, v_1\in\Lambda_1, u_2\in\Lambda_2$ and $v_3\in\Lambda_3$ such that $z=z_1+z_2$, where $z_1=(u_1,u_2)^T, z_2=(v_1,v_3)^T$. It deduces $u=u_1+v_1\in\Lambda_1, Mu=u_2+v_3\in M\Lambda_1\cap(\Lambda_2+\Lambda_3)$. Then by the definition \eqref{eq:the form Q}, we have
$$
\mathfrak{Q}(Gr(M), \Lambda_1\oplus\Lambda_2; \Lambda_1\oplus\Lambda_3)(z,z)=\langle \mathcal{J}z_1,z_2\rangle=\langle-Ju_1,v_1\rangle+\langle Ju_2,v_3\rangle=\langle Ju_2,v_3\rangle.
$$
Therefore, by definition \eqref{trip1} we have $\mathfrak{Q}(Gr(M), \Lambda_1\oplus\Lambda_2; \Lambda_1\oplus\Lambda_3)(z,z)=\mathfrak{Q}(M\Lambda_1, \Lambda_2;\Lambda_3)(u,u)$ with $u \in M\Lambda_1\cap(\Lambda_2+\Lambda_3)$ and consequently we have
 \begin{equation}\label{eq:any symplectic matrix boundary condition 3}
m^+(\mathfrak{Q}(Gr(M),\Lambda_1\oplus\Lambda_2;\Lambda_1\oplus\Lambda_3))=m^+(\mathfrak{Q}(M\Lambda_1 \Lambda_2;\Lambda_3)).
 \end{equation}
 Moreover, one can easily check that
 \begin{equation}\label{eq:any symplectic matrix boundary condition 4}
 \begin{aligned}
 \dim(Gr(M)\cap(\Lambda_1\oplus\Lambda_3))&=\dim(M\Lambda_1\cap\Lambda_3), \\ \dim(Gr(M)\cap(\Lambda_1\oplus\Lambda_2)\cap(\Lambda_1\oplus\Lambda_3))&=\dim(M\Lambda_1\cap\Lambda_2\cap\Lambda_3).
 \end{aligned}
 \end{equation}
  Thus by definition \eqref{trip1} and \eqref{eq:any symplectic matrix boundary condition 3}, \eqref{eq:any symplectic matrix boundary condition 4} we get \eqref{eq:any symplectic matrix boundary condition 1}. The proof of \eqref{eq:any symplectic matrix boundary condition 1.1} is totally similar and we omit the details.
\end{proof}

As a corollary of Lemma \ref{lem:lemma for computing any symplectic matrix boundary condition}, we have:
\begin{cor}\label{cor:lemma for computing any symplectic matrix boundary condition} For  $M\in\Sp(2n)$ and $\Lambda_i\in Lag(\mathbf{C}^{2n}, \omega)$ for $i=1,\cdots,4$. We have
\begin{equation}\label{eq:any symplectic matrix boundary condition 1.2}
i(Gr(M), \Lambda_1\oplus\Lambda_2, \Lambda_3\oplus\Lambda_4)=i(M\Lambda_1, \Lambda_2, \Lambda_4)+i(M^{-1}\Lambda_4,\Lambda_1,\Lambda_3,-\omega),
\end{equation} and equivalently
 \begin{equation}\label{eq:any symplectic matrix boundary condition 1.3}
i(Gr(M), \Lambda_1\oplus\Lambda_2, \Lambda_3\oplus\Lambda_4)=i(M\Lambda_3, \Lambda_2, \Lambda_4)+i(M^{-1}\Lambda_2,\Lambda_1, \Lambda_3,-\omega).
\end{equation}
\end{cor}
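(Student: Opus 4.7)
The plan is to reduce the four-Lagrangian triple index on the right-hand side to two applications of the preceding lemma by interpolating through an auxiliary Lagrangian and using the Hörmander-index identity \eqref{eq:the Hormander index computed by triple index}. Taking $\lambda_1=Gr(M)$, $\lambda_2=\Lambda_1\oplus\Lambda_2$, $\kappa_1=\Lambda_1\oplus\Lambda_4$ and $\kappa_2=\Lambda_3\oplus\Lambda_4$, that identity gives
\begin{equation}\nonumber
i(Gr(M),\Lambda_1\oplus\Lambda_2,\Lambda_3\oplus\Lambda_4)-i(Gr(M),\Lambda_1\oplus\Lambda_2,\Lambda_1\oplus\Lambda_4)=i(Gr(M),\Lambda_1\oplus\Lambda_4,\Lambda_3\oplus\Lambda_4)-i(\Lambda_1\oplus\Lambda_2,\Lambda_1\oplus\Lambda_4,\Lambda_3\oplus\Lambda_4).
\end{equation}
The first term on the right and the subtracted term on the left are immediately given by Lemma \ref{lem:lemma for computing any symplectic matrix boundary condition}: by \eqref{eq:any symplectic matrix boundary condition 1.1} the first equals $i(M^{-1}\Lambda_4,\Lambda_1,\Lambda_3,-\omega)$, and by \eqref{eq:any symplectic matrix boundary condition 1} the second equals $i(M\Lambda_1,\Lambda_2,\Lambda_4)$. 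So \eqref{eq:any symplectic matrix boundary condition 1.2} will follow once the remaining "error'' term $i(\Lambda_1\oplus\Lambda_2,\Lambda_1\oplus\Lambda_4,\Lambda_3\oplus\Lambda_4)$ is shown to vanish.

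The vanishing of that error term is the one genuinely new computation and is the main (only) obstacle. I plan to compute it directly from \eqref{trip1}: set $\alpha=\Lambda_1\oplus\Lambda_2$, $\beta=\Lambda_1\oplus\Lambda_4$, $\kappa=\Lambda_3\oplus\Lambda_4$. Component-wise, $\alpha\cap\kappa=(\Lambda_1\cap\Lambda_3)\oplus(\Lambda_2\cap\Lambda_4)$, $\alpha\cap\beta\cap\kappa=(\Lambda_1\cap\Lambda_3)\oplus(\Lambda_2\cap\Lambda_4)$, and $\alpha\cap(\beta+\kappa)=\Lambda_1\oplus(\Lambda_2\cap\Lambda_4)=\alpha\cap\beta+\alpha\cap\kappa$. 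The last equality says the domain of $\mathfrak{Q}(\alpha,\beta;\kappa)$ coincides with its kernel by \eqref{Qker}, so $m^+(\mathfrak{Q})=0$, and \eqref{trip1} gives $i(\alpha,\beta,\kappa)=0+\dim(\alpha\cap\kappa)-\dim(\alpha\cap\beta\cap\kappa)=0$. This establishes \eqref{eq:any symplectic matrix boundary condition 1.2}.

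For the equivalent form \eqref{eq:any symplectic matrix boundary condition 1.3} the plan is identical but with the intermediate Lagrangian chosen as $\Lambda_3\oplus\Lambda_2$ rather than $\Lambda_1\oplus\Lambda_4$: apply \eqref{eq:the Hormander index computed by triple index} with $\kappa_1=\Lambda_3\oplus\Lambda_2,\kappa_2=\Lambda_3\oplus\Lambda_4$, use \eqref{eq:any symplectic matrix boundary condition 1.1} on $i(Gr(M),\Lambda_1\oplus\Lambda_2,\Lambda_3\oplus\Lambda_2)=i(M^{-1}\Lambda_2,\Lambda_1,\Lambda_3,-\omega)$ and \eqref{eq:any symplectic matrix boundary condition 1} (after a trivial relabeling via Lemma \ref{lem:change order of form Q}) on $i(Gr(M),\Lambda_3\oplus\Lambda_2,\Lambda_3\oplus\Lambda_4)=i(M\Lambda_3,\Lambda_2,\Lambda_4)$. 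The analogous error term $i(\Lambda_1\oplus\Lambda_2,\Lambda_3\oplus\Lambda_2,\Lambda_3\oplus\Lambda_4)$ vanishes by the same kernel-equals-domain argument as above. Combining the two sides yields \eqref{eq:any symplectic matrix boundary condition 1.3}, completing the proof.
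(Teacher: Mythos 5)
Your proposal is correct and follows essentially the same route as the paper: the same Hörmander-index identity \eqref{eq:the Hormander index computed by triple index} with the same intermediate Lagrangians $\Lambda_1\oplus\Lambda_4$ and $\Lambda_3\oplus\Lambda_2$, combined with Lemma \ref{lem:lemma for computing any symplectic matrix boundary condition}. The only difference is that you explicitly verify the vanishing of the terms $i(\Lambda_1\oplus\Lambda_2,\Lambda_1\oplus\Lambda_4,\Lambda_3\oplus\Lambda_4)$ and $i(\Lambda_1\oplus\Lambda_2,\Lambda_3\oplus\Lambda_2,\Lambda_3\oplus\Lambda_4)$ via \eqref{trip1} and \eqref{Qker} (a computation the paper leaves implicit, and which is correct), while your aside invoking Lemma \ref{lem:change order of form Q} is unnecessary since the identification $i(Gr(M),\Lambda_3\oplus\Lambda_2,\Lambda_3\oplus\Lambda_4)=i(M\Lambda_3,\Lambda_2,\Lambda_4)$ is just a substitution in \eqref{eq:any symplectic matrix boundary condition 1}.
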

\begin{proof} From Lemma \ref{lem:lemma for computing any symplectic matrix boundary condition}  and \eqref{eq:the Hormander index computed by triple index}, we have
\begin{equation}\label{eq:difference for separate 3.1}
\begin{aligned}
& i(Gr(M), \Lambda_1\oplus\Lambda_2, \Lambda_3\oplus\Lambda_4)-i(Gr(M), \Lambda_1\oplus\Lambda_2, \Lambda_1\oplus\Lambda_4)\\&=s(Gr(M), \Lambda_1\oplus\Lambda_2;\Lambda_1\oplus\Lambda_4,\Lambda_3\oplus\Lambda_4)\\
&=i(Gr(M), \Lambda_1\oplus\Lambda_4,\Lambda_3\oplus\Lambda_4)-i(\Lambda_1\oplus\Lambda_2,\Lambda_1\oplus\Lambda_4,\Lambda_3\oplus\Lambda_4)\\
&=i(M^{-1}\Lambda_4, \Lambda_1, \Lambda_3,-\omega).
\end{aligned}
\end{equation}
Since  $i(Gr(M), \Lambda_1\oplus\Lambda_2, \Lambda_1\oplus\Lambda_4)=i(M\Lambda_1,\Lambda_2, \Lambda_4)$, then we have  \eqref{eq:any symplectic matrix boundary condition 1.2}.
Similarly, \begin{equation}\label{eq:difference for separate 3.1}
\begin{aligned}
& i(Gr(M), \Lambda_1\oplus\Lambda_2, \Lambda_3\oplus\Lambda_4)-i(Gr(M), \Lambda_1\oplus\Lambda_2, \Lambda_3\oplus\Lambda_2)\\&=s(Gr(M), \Lambda_1\oplus\Lambda_2;\Lambda_3\oplus\Lambda_2,\Lambda_3\oplus\Lambda_4)\\
&=i(Gr(M), \Lambda_3\oplus\Lambda_2,\Lambda_3\oplus\Lambda_4)-i(\Lambda_1\oplus\Lambda_2,\Lambda_3\oplus\Lambda_2,\Lambda_3\oplus\Lambda_4)\\
&=i(M\Lambda_3, \Lambda_2, \Lambda_4).
\end{aligned}
\end{equation}
Since  $i(Gr(M), \Lambda_1\oplus\Lambda_2, \Lambda_3\oplus\Lambda_2)=i(M^{-1}\Lambda_2,\Lambda_1, \Lambda_3,-\omega)$, then we have  \eqref{eq:any symplectic matrix boundary condition 1.3}. This complete the proof.
\end{proof}


\begin{exam}\label{exam:precise computations of example}
Here we will give the precise computations of Example \ref{exm:examples}.




 1. $\mathbf{\cite[Theorem 1.2]{HS}}$   Recall that  $V\subset\Lambda_N$ and  $\Lambda_V=\mathcal{J}V^{\perp}\oplus V$.    For every $x=(-u_1,u_1,v_1,v_1)^T\in Gr(I_{2n})\cap(\Lambda_V+\Lambda_D)$, there exist $y=(u_2,u_3,v_2,v_3)^T\in \Lambda_V$ and $z=(u_4,v_4,0,0)^T\in \Lambda_D$ such that $x=y+z$ which derives $u_2+u_3=-(u_4+v_4), v_1=v_2=v_3$. By direct computations we have
 $
 \mathfrak{Q}(x,x)=\omega(y,z)=0.
$
 So $m^+(\mathfrak{Q}(Gr(I_{2n}), \Lambda_V, \Lambda_D)=0$. Moreover, it's easy to check that $x=(-u,u,v,v)^T\in Gr(I_{4n})\cap\Lambda_V\cap\Lambda_D$ if and only if $(-u,u)^T\in J_{2n}V^\perp$ and $v=0$, then $\dim(Gr(I_{2n})\cap\Lambda_V\cap\Lambda_D)=\dim(V^\perp\cap Gr(-I_n))$, where $Gr(-I_n)=\{(u,-u), u\in\mathbf{C}^n  \}\subset\Lambda_N$.    Now we can conclude that $$i(Gr(I_{2n}), \Lambda_V,\Lambda_D)=n-\dim(V^\perp\cap Gr(-I_n)), \quad  \Delta(\Lambda_V)=\dim(V^\perp\cap Gr(-I_n))).$$

 Particularly, those two concrete cases considered in \cite[Theorem 1.2]{HS} are very easy to be calculated directly.

2. $\mathbf{Separated \ boundary \ condition}$

  By \eqref{eq:particular triple index} and \eqref{eq:any symplectic matrix boundary condition 1.2},  we have
\begin{equation}\label{eq:separated boundary condition}
\begin{aligned}
i(Gr(I_{2n}), \Lambda_s\oplus\Lambda_e, \Lambda_D)&=i(\Lambda^n_D,\Lambda_s, \Lambda^n_D, -\omega)+i(\Lambda_s, \Lambda_e,\Lambda^n_D)\\
&=n-\dim(\Lambda_s\cap\Lambda^n_D)+i(\Lambda_s, \Lambda_e,\Lambda^n_D),
\end{aligned}
\end{equation}
and consequently $\Delta(\Lambda_s\oplus\Lambda_e)=\dim(\Lambda_s\cap\Lambda^n_D)-i(\Lambda_s,\Lambda_e,\Lambda^n_D)$.
 Particularly, for the case where $y(0)=A_sx(0), y(T)=A_ex(T)$, or equivalently, $\Lambda_s=\{(y,x)^T\ |\ y=A_sx\}, \Lambda_e=\{(y,x)^T\ |\ y=A_ex\}$, it's easy to check that $\Lambda_s\cap\Lambda^n_D=\{0\}$. Then by \eqref{trip1} we have

\begin{equation}\label{eq:special separated condition}
i(\Lambda_s,\Lambda_e,\Lambda^n_D)=m^+(\mathfrak{Q}(\Lambda_s,\Lambda_e,\Lambda^n_D)).
\end{equation}
In fact, for every $z=(A_sx,x)^T\in\Lambda_s=\Lambda_e\cap\Lambda^n_D$, there exist $y, u\in \mathbf{C}^n$ such that $\begin{bmatrix}A_sx\\x\end{bmatrix}=\begin{bmatrix}A_ey\\y\end{bmatrix}+\begin{bmatrix}u\\0\end{bmatrix}$. Obviously, we have $x=y, u=A_sx-A_ey=(A_s-A_e)y$. Therefore,
$$\mathfrak{Q}(z,z)=\langle \begin{bmatrix}0&-I_n\\I_n&0\end{bmatrix}\begin{bmatrix}A_ey\\y\end{bmatrix}, \begin{bmatrix}u\\0\end{bmatrix}\rangle=-\langle y,u\rangle=\langle (A_e-A_s)y, y\rangle.$$
Consequently, there holds $m^+(\mathfrak{Q}(\Lambda_s,\Lambda_e,\Lambda^n_D))=m^+(A_e-A_s)$. Now by \eqref{eq:separated boundary condition} and \eqref{eq:special separated condition}, we have
$i(Gr(I_{2n}), \Lambda_s\oplus\Lambda_e, \Lambda_D)=n+m^+(A_e-A_s)$ and consequently $\Delta(Gr(I_{2n}), \Lambda_s\oplus\Lambda_e, \Lambda_D)=-m^+(A_e-A_s)$.
\end{exam}

Next we will give a very simple concrete example to show how our result works for separated boundary condition.
\begin{exam}
Let $\Lambda_s=\{(0,a)^T\ | \ a\in\mathbf{R}\}$ and $\Lambda_e=\{(b,b) \ | \ b\in \mathbf{R}\}$. Consider the Sturm$-$Liouville system
\begin{equation}\label{eq:simple Sturm-Liouville system}
-\ddot{x}(t)-x(t)=0,
\end{equation}
where $x\in W^{2,2}([0,\pi], \mathbf{R})$ and satisfies that $(\dot{x}(0), x(0))^T\in\Lambda_s, (\dot{x}(\pi), x(\pi))^T\in\Lambda_e$.

The first step is to compute the Morse index. Consider the family of Sturm$-$Liouville system with parameter $s\in[0,+\infty)$ as
\begin{equation}\label{eq:family of Sturm-Liouville system}
-\ddot{x}(t)+(s-1)x(t)=0
\end{equation}
under the same boundary condition, by some direct computations we can conclude that there only exist $s_1\in(0,1)$ and $s_2\in(1,+\infty)$ such that system \eqref{eq:family of Sturm-Liouville system} has solutions satisfying the boundary condition. Therefore the Morse index $m^-(x)=2$.

The second step is to compute the Maslov index. Let $y(t)=\dot{x}(t)$ and $z(t)=(y(t),x(t))^T$, then system \eqref{eq:simple Sturm-Liouville system}
can be converted into Hamiltonian system \eqref{eq:Hamilton system} with
$\mathcal{B}(t)=\begin{bmatrix}1&0\\0&1\end{bmatrix}$. The fundamental solution is $\gamma(t)=\begin{bmatrix}\cos t&-\sin t\\ \sin t&\cos t\end{bmatrix}$. Then $\gamma(t)\Lambda_s\cap\Lambda_e\neq\{0\}$ if and only if $-\sin t=\cos t$. Note that there exists only one $t_0\in(0,\pi)$ such that $-\sin t_0=\cos t_0$ and the crossing form $\mathcal{B}(t_0)$ is positive definite, then by formula \eqref{eq:maslov index formula} the Maslov index $\mu^-(z)=1$. Therefore, there holds $\mu^-(z)-m^-(x)=-1$ which coincides with the formula \eqref{eq:formula for special separated boundary condition}.

\end{exam}

\section{Morse index Theorem}

In order to prove the main theorem, we firstly introduce a theorem for the difference of Morse index of Hermitian form with its restriction on a subspace in \S 3.1. We will prove our main results in \S 3.2.

\subsection{An abstract  difference  Morse indices Theorem}
The following work is to introduce the definition of relative Morse index which will play an essential role in the proof of our main theorem. Let $X$ be a complex vector space and $\mathfrak{Q}$ be a Hermitian form on $X$.   For the general case please refer to \cite[Section 3.3]{WL}.  Let $V$ be a subspace of $X$ such that $\dim X/V<+\infty$. Let $V^\mathfrak{Q}$ be the space $\{x\in X|\mathfrak{Q}(x,y) = 0, \forall y \in V\}$. Denote $V^{\mathfrak{Q}\mathfrak{Q}}=(V^{\mathfrak{Q}})^\mathfrak{Q}$. Generally, $\mathfrak{Q}$ induces an Hermitian form  $\overline{\mathfrak{Q}}$ on $X/\ker \mathfrak{Q}$. Let $\mathcal{P}$ be
the natural projection from $X$ to $X/\ker \mathfrak{Q}$. We have $\mathcal{P}(V^\mathfrak{Q}) = (\mathcal{P}(V))^\mathfrak{Q}$ for any subspace $V$. For a given subspace $V$ of $X$ such that $\dim(X/\ker \mathfrak{Q})/(\mathcal{P}V))<+\infty$ and
$V^{\mathfrak{Q}\mathfrak{Q}}=V+\ker \mathfrak{Q}$. We define the relative Morse index $I(\mathfrak{Q}|_{V},\mathfrak{Q})$ as
\begin{equation}\label{eq:definition of relative morse index}
I(\mathfrak{Q}|_{V},\mathfrak{Q})= \dim((V\cap V^\mathfrak{Q} +\ker \mathfrak{Q})/\ker \mathfrak{Q})+m^-(\mathfrak{Q}|_{V^\mathfrak{Q}}).
\end{equation}

	\begin{thm}\label{thm:abstract_re_morse}
		If $V^{\mathfrak{Q}\mathfrak{Q}}=V+\ker(\mathfrak{Q})$ and both $m^-(\mathfrak{Q}), m^-(\mathfrak{Q}|_{V^\mathfrak{Q})}$ exist, then \begin{equation}\label{eq:formula of relative morse index}
		m^-(\mathfrak{Q})-m^-(\mathfrak{Q}|_{V})=I(\mathfrak{Q}|_{V},\mathfrak{Q}).
		\end{equation}
	\end{thm}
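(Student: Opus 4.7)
The plan is to pass to the quotient by $\ker\mathfrak{Q}$ to reduce to the nondegenerate case, and then exhibit an explicit $\overline{\mathfrak{Q}}$-orthogonal decomposition of $X$ whose pieces each contribute a computable amount to $m^-(\overline{\mathfrak{Q}})$.

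First, let $\overline{X}:=X/\ker\mathfrak{Q}$, and let $\overline{\mathfrak{Q}}$ be the induced nondegenerate Hermitian form. Setting $W:=\mathcal{P}V$, the closure hypothesis $V^{\mathfrak{Q}\mathfrak{Q}}=V+\ker\mathfrak{Q}$ becomes $W^{\overline{\mathfrak{Q}}\,\overline{\mathfrak{Q}}}=W$; all three Morse indices are preserved; and the quantity $\dim((V\cap V^{\mathfrak{Q}}+\ker\mathfrak{Q})/\ker\mathfrak{Q})$ equals $\dim(W\cap W^{\overline{\mathfrak{Q}}})=:\dim R$. Thus it suffices, in the nondegenerate setting, to prove
\[
m^-(\overline{\mathfrak{Q}})\;=\;m^-(\overline{\mathfrak{Q}}|_W)+\dim R+m^-(\overline{\mathfrak{Q}}|_{W^{\overline{\mathfrak{Q}}}}).
\]

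Since $W$ has finite codimension and $\overline{\mathfrak{Q}}$ is nondegenerate, the map $W^{\overline{\mathfrak{Q}}}\hookrightarrow (X/W)^{*},\ x\mapsto\overline{\mathfrak{Q}}(x,\cdot),$ is injective, so $W^{\overline{\mathfrak{Q}}}$ (and hence $R$) is finite-dimensional. The hypothesis $W^{\overline{\mathfrak{Q}}\,\overline{\mathfrak{Q}}}=W$ identifies $R$ as the common radical of $\overline{\mathfrak{Q}}|_W$ and $\overline{\mathfrak{Q}}|_{W^{\overline{\mathfrak{Q}}}}$. Choose algebraic complements $W=R\oplus W_0$ and $W^{\overline{\mathfrak{Q}}}=R\oplus W_1$; then $\overline{\mathfrak{Q}}|_{W_0}$, $\overline{\mathfrak{Q}}|_{W_1}$ are nondegenerate, $W_0\oplus W_1$ is an $\overline{\mathfrak{Q}}$-orthogonal direct sum, and $m^-(\overline{\mathfrak{Q}}|_W)=m^-(\overline{\mathfrak{Q}}|_{W_0})$, $m^-(\overline{\mathfrak{Q}}|_{W^{\overline{\mathfrak{Q}}}})=m^-(\overline{\mathfrak{Q}}|_{W_1})$.

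Next I would construct an isotropic subspace $R^{*}\subset X$ of dimension $\dim R$, dual to $R$ under $\overline{\mathfrak{Q}}$ and $\overline{\mathfrak{Q}}$-orthogonal to $W_0\oplus W_1$. Linear independence of the functionals $\overline{\mathfrak{Q}}(r_i,\cdot)$ (which holds by nondegeneracy of $\overline{\mathfrak{Q}}$) yields vectors $\tilde r_i^{*}$ with $\overline{\mathfrak{Q}}(r_j,\tilde r_i^{*})=\delta_{ij}$; standard corrections by elements of $R$ and of $W_0\oplus W_1$ enforce isotropy and the orthogonality while preserving duality with $R$. Then $R\oplus R^{*}$ is hyperbolic with $m^-(\overline{\mathfrak{Q}}|_{R\oplus R^{*}})=\dim R$, and
\[
Z\;:=\;W_0\oplus W_1\oplus R\oplus R^{*}\;=\;W+W^{\overline{\mathfrak{Q}}}+R^{*}
\]
is an $\overline{\mathfrak{Q}}$-orthogonal direct sum on which the form is nondegenerate with $m^-(\overline{\mathfrak{Q}}|_Z)=m^-(\overline{\mathfrak{Q}}|_{W_0})+m^-(\overline{\mathfrak{Q}}|_{W_1})+\dim R$. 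To conclude $Z=X$, I would invoke the identity $(W+W^{\overline{\mathfrak{Q}}})^{\overline{\mathfrak{Q}}}=W^{\overline{\mathfrak{Q}}}\cap W^{\overline{\mathfrak{Q}}\,\overline{\mathfrak{Q}}}=R$ together with a finite-dimensional double-annihilator lemma: both $W+W^{\overline{\mathfrak{Q}}}$ and $R^{\overline{\mathfrak{Q}}}$ have codimension $\dim R$ in $X$ (the former because $W^{\overline{\mathfrak{Q}}\,\overline{\mathfrak{Q}}}=W$ gives $\dim X/W=\dim W^{\overline{\mathfrak{Q}}}$), so they coincide; transversality of $R^{*}$ then forces $Z=X$ and delivers the required formula.

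The main obstacle is the simultaneous construction of $R^{*}$ — isotropic, dual to $R$, and orthogonal to $W_0\oplus W_1$ — together with the verification that $Z$ exhausts $X$ in infinite dimensions. The latter rests crucially on the closure hypothesis $W^{\overline{\mathfrak{Q}}\,\overline{\mathfrak{Q}}}=W$, via the codimension equality $\dim X/W=\dim W^{\overline{\mathfrak{Q}}}$; without it, $W+W^{\overline{\mathfrak{Q}}}$ can be a proper subspace of $R^{\overline{\mathfrak{Q}}}$ and the final step of the proof would collapse.
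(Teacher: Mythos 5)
You correctly reduce to the nondegenerate quotient, and your final codimension count for $W+W^{\overline{\mathfrak{Q}}}=R^{\overline{\mathfrak{Q}}}$ is fine, but the central construction has a genuine gap: the space $R^{*}$ --- dual to $R$ and $\overline{\mathfrak{Q}}$-orthogonal to $W_0\oplus W_1$ --- need not exist under the stated hypotheses. Your ``standard correction by elements of $W_0\oplus W_1$'' amounts to representing the functional $\overline{\mathfrak{Q}}(\cdot,\tilde r_i^{*})\vert_{W_0\oplus W_1}$ by a vector of $W_0\oplus W_1$, i.e.\ to the orthogonal splitting $X=(W_0\oplus W_1)\oplus(W_0\oplus W_1)^{\overline{\mathfrak{Q}}}$; for a merely nondegenerate Hermitian form on an infinite-dimensional vector space (the theorem assumes no completeness) this can fail. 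Concrete counterexample satisfying every hypothesis of the theorem: let $X=c_{00}\oplus\mathbf{C}a\oplus\mathbf{C}b$, where $c_{00}$ is the space of finitely supported sequences with its $\ell^2$ inner product, and let $\mathfrak{Q}$ restrict to that inner product on $c_{00}$, with $\mathfrak{Q}(a,a)=\mathfrak{Q}(b,b)=0$, $\mathfrak{Q}(a,b)=1$, $\mathfrak{Q}(a,e_n)=1$ and $\mathfrak{Q}(b,e_n)=0$ for all $n$. Then $\mathfrak{Q}$ is nondegenerate, $m^-(\mathfrak{Q})=1$, and $V=c_{00}\oplus\mathbf{C}b$ has codimension one with $V^{\mathfrak{Q}}=\mathbf{C}b\subset V$ and $V^{\mathfrak{Q}\mathfrak{Q}}=V$; here $R=\mathbf{C}b$, $W_0=c_{00}$, $W_1=\{0\}$. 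A vector $z=x+sa+tb$ orthogonal to $W_0$ must satisfy $x_n+s=0$ for all $n$, forcing $s=0$ and $x=0$; so $(W_0\oplus W_1)^{\mathfrak{Q}}=\mathbf{C}b=R$ pairs trivially with $R$, no $R^{*}$ exists, and $(W_0\oplus W_1)+(W_0\oplus W_1)^{\mathfrak{Q}}=V\neq X$. The orthogonal decomposition $Z=W_0\oplus W_1\oplus R\oplus R^{*}$ on which your additivity computation rests is simply not there, although the theorem's conclusion does hold in this example ($m^-(\mathfrak{Q})-m^-(\mathfrak{Q}\vert_V)=1=\dim R$).

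Note how the paper sidesteps exactly this point: it never attempts to split off the nondegenerate part of $V$ orthogonally. It works with the (possibly non-direct) sum $V+V^{\mathfrak{Q}}$, chooses an arbitrary linear complement $W$ of the finite-codimension subspace (which always exists), proves $X=V^{\mathfrak{Q}}+W^{\mathfrak{Q}}$ by pure dimension counting (Lemmas \ref{lem:dim_Qorth} and \ref{lem:abstract_re_morse_1}), and uses the additivity Lemma \ref{lem:Morse_additive}, which is stated for orthogonal sums that need not be direct; the hyperbolic count is then performed on $V^{\mathfrak{Q}}+W$, whose existence is unconditional. In my example that argument runs through with $W=\mathbf{C}a$, even though $a$ cannot be made orthogonal to $W_0$. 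To salvage your route you would need an additional hypothesis guaranteeing the splitting (e.g.\ the Hilbert-space Fredholm setting of the paper's application, where Riesz representation supplies the correction vectors), but in the algebraic generality in which the theorem is stated the step is false, so the proposal as written does not prove the theorem.
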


	\begin{rem}  In \cite[Equation 1.2]{BTZ}, there is an index theorem give by
\begin{equation}\label{eq:finite dimension index theorem}
\ind (H) = \ind (H|_W) + \ind (H|_{W^\perp}) +\dim(W\cap W^\perp) -\dim(W\cap\ker H),
\end{equation}
where $H$ is a symmetric form on a finite dimensional real vector space $V$ (or a Hermitian form on a complex vector space) and $W\subset V$. Note that $\dim((V\cap V^\mathfrak{Q} +\ker \mathfrak{Q})/\ker \mathfrak{Q})=\dim(V\cap V^\mathfrak{Q})-\dim(V\cap \ker \mathfrak{Q})$, therefore, Theorem \ref{thm:abstract_re_morse} can be viewed as a generalization of index form \eqref{eq:finite dimension index theorem} to infinite dimensional situation.
	\end{rem}	
	In order to prove Theorem \ref{thm:abstract_re_morse}, we need several lemmas.

\begin{lem}\label{lem:Morse_additive}
	Assume that $X=U+W$ and $\mathfrak{Q}(u,w)=0$ with $u\in U$ ,$w\in W$.
	We have $m^-(\mathfrak{Q})=m^-(\mathfrak{Q}|_U)+m^-(\mathfrak{Q}|_W)$.
\end{lem}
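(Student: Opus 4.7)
The plan is to reduce the claim to the standard additivity of the negative index for a block-diagonal Hermitian form. The one subtlety is that $U$ and $W$ are not assumed to meet trivially, so I will first extract the common part $K := U \cap W$ and show it is absorbed by the radical of $\mathfrak{Q}$. Indeed, for any $x \in K$ and any $y \in X = U + W$, writing $y = u + w$ with $u \in U$, $w \in W$ gives $\mathfrak{Q}(x,y) = \mathfrak{Q}(x,u) + \mathfrak{Q}(x,w) = 0$, using $x \in W$ paired against $u \in U$ and $x \in U$ paired against $w \in W$. Hence $K \subseteq \ker \mathfrak{Q}$, and in particular $\mathfrak{Q}|_K \equiv 0$.

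Next I pick algebraic complements $U = K \oplus U'$ and $W = K \oplus W'$. The identity $X = K \oplus U' \oplus W'$ then follows from $X = U + W$ together with the check that any relation $k + u' + w' = 0$ forces $w' \in U \cap W = K$, hence $w' \in W' \cap K = 0$, and then $u' = -k \in U' \cap K = 0$, so $k = 0$. On this decomposition the Hermitian form becomes block-diagonal: the hypothesis $\mathfrak{Q}(U,W)=0$ kills the $(U',W')$ cross-block, while the radical observation kills the $K$-row and $K$-column entirely.

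For a block-diagonal Hermitian form the negative index is additive over the blocks, so
\[
m^-(\mathfrak{Q}) = m^-(\mathfrak{Q}|_K) + m^-(\mathfrak{Q}|_{U'}) + m^-(\mathfrak{Q}|_{W'}) = m^-(\mathfrak{Q}|_{U'}) + m^-(\mathfrak{Q}|_{W'}).
\]
Applying the same block-diagonal argument to the restrictions yields $m^-(\mathfrak{Q}|_U) = m^-(\mathfrak{Q}|_{K \oplus U'}) = m^-(\mathfrak{Q}|_{U'})$ and, symmetrically, $m^-(\mathfrak{Q}|_W) = m^-(\mathfrak{Q}|_{W'})$. Adding these gives the assertion.

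The only mild obstacle is keeping track of the possible overlap $K = U \cap W$; isolating it as a radical-contained summand is the single conceptual step, after which the lemma reduces to the textbook additivity of the negative index on an orthogonal direct sum. No finite-dimensionality of $X$ is needed, so the same proof covers the setting in which $m^-(\mathfrak{Q})$ is applied in Theorem \ref{thm:abstract_re_morse}.
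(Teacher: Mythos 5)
Your reduction is sound: $U\cap W\subseteq\ker\mathfrak{Q}$ does follow from the hypothesis $\mathfrak{Q}(U,W)=0$ together with Hermitian symmetry, and your verification that $X=K\oplus U'\oplus W'$ with the three summands mutually $\mathfrak{Q}$-orthogonal is correct. The gap is in the final step. The assertion that ``for a block-diagonal Hermitian form the negative index is additive over the blocks'' is not an auxiliary textbook fact in this setting: in the direct-sum case it \emph{is} Lemma \ref{lem:Morse_additive}, so after splitting off the radical piece you have reduced the lemma to itself. In finite dimensions an appeal to Sylvester's law of inertia would be legitimate, but the lemma is invoked in the proof of Theorem \ref{thm:abstract_re_morse} with $X$ infinite dimensional (e.g. $X=H_{\Lambda_0}([0,T])$ and subspaces such as $V+V^{\mathfrak{Q}}$), where $m^-$ means the dimension of a maximal $\mathfrak{Q}$-negative definite subspace and no diagonalization is available. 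The inequality $m^-(\mathfrak{Q})\geq m^-(\mathfrak{Q}|_{U'})+m^-(\mathfrak{Q}|_{W'})$ is immediate, but the reverse inequality is the entire content of the statement, and your citation conceals exactly the work the paper's proof spends its second half doing. Your closing remark that ``no finite-dimensionality of $X$ is needed'' is true of the statement, but it is precisely why the finite-dimensional textbook result cannot simply be quoted.

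To close the gap you must prove the orthogonal direct-sum case (when the two indices are finite; otherwise the identity is trivial). For instance: choose maximal negative definite subspaces $M_1\subseteq U'$ and $M_2\subseteq W'$; since $\mathfrak{Q}|_{M_i}$ is negative definite, hence nondegenerate, one gets $U'=M_1\oplus P_1$ and $W'=M_2\oplus P_2$ with $P_i$ the $\mathfrak{Q}$-orthogonal complement of $M_i$ inside $U'$, resp.\ $W'$, and maximality forces $\mathfrak{Q}|_{P_i}\geq 0$. Then $P:=K\oplus P_1\oplus P_2$ is $\mathfrak{Q}$-positive semidefinite (all cross terms vanish), and the projection of $X=M_1\oplus M_2\oplus P$ onto $M_1\oplus M_2$ along $P$ is injective on every negative definite subspace, which gives $m^-(\mathfrak{Q})\leq \dim M_1+\dim M_2=m^-(\mathfrak{Q}|_U)+m^-(\mathfrak{Q}|_W)$. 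This is in substance what the paper does: it takes maximal negative subspaces of $U$ and $W$ directly, computes the $\mathfrak{Q}$-orthogonal complement of their sum as a sum of three pieces on which $\mathfrak{Q}\geq0$ (one of them being $U\cap W$), and concludes by maximality. Your preliminary splitting of $K=U\cap W$ tidies that computation but does not replace it.
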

\begin{proof}
	Let $U'$, $W'$ be  maximum negative subspaces of $U$, $W$ respectively.
	Then for each $u\in U'$, $w\in W'$ we have $\mathfrak{Q}(u+w,u+w)=\mathfrak{Q}(u,u)+\mathfrak{Q}(w,w)< 0$ if $u+w\neq 0$.
	Let $x\in U'\cap W'$. We have $\mathfrak{Q}(x,x)=0$. It follows that $U'\cap W'={0}$. So $m^-(\mathfrak{Q})\ge \dim(U'+W')=\dim(U')+\dim(W')=m^-(\mathfrak{Q}|_U)+m^-(\mathfrak{Q}|_W)$.
	
	Let $Y\supset U'+W'$ such that $\mathfrak{Q}|_Y <0$. Let $y\in Y\cap (U'+W')^{\mathfrak{Q}}$.
	Since $W\subset U^{\mathfrak{Q}}\subset U'^{\mathfrak{Q}}$, then  $U'^{\mathfrak{Q}}=U'^{\mathfrak{Q}}\cap (U+W)=U'^{\mathfrak{Q}}\cap U+W$. Similarly, we have $W'^{\mathfrak{Q}}=W'^{\mathfrak{Q}}\cap W+U$.
	It follows that  $ (U'+W')^{\mathfrak{Q}}=U'^{\mathfrak{Q}}\cap W'^{\mathfrak{Q}}=(U'^{\mathfrak{Q}}\cap U+W)\cap (W'^{\mathfrak{Q}}\cap W+U)=(W'^{\mathfrak{Q}}\cap W)+(U'^{\mathfrak{Q}}\cap U+W)\cap U=(W'^{\mathfrak{Q}}\cap W)+(U'^{\mathfrak{Q}}\cap U)+(U\cap W)$.
	Then  $y=y_1+y_2+y_3$ with $y_1\in U'^{\mathfrak{Q}}\cap U$, $y_2\in W'^{\mathfrak{Q}}\cap W$, $y_3\in W\cap U$.
	
	Since $U',W'$ are maximum negative subspaces of $U,W$ respectively, we have $\mathfrak{Q}(y_1,y_1)\ge 0$,$\mathfrak{Q}(y_2,y_2)\ge 0$. Note that $\mathfrak{Q}(y_3,y_3)=0$. We have $\mathfrak{Q}(y,y)=\mathfrak{Q}(y_1,y_1)+\mathfrak{Q}(y_2,y_2)+\mathfrak{Q}(y_3,y_3)\ge 0$. Note that $\mathfrak{Q}(y,y)\le 0$ since $\mathfrak{Q}|_Y<0$. Then we have $y=0$.
	It follows that $Y=U'+W'$. Then we have
	$$m^-(\mathfrak{Q})=\dim(U'+W')=m^-(\mathfrak{Q}|_U)+m^-(\mathfrak{Q}|_W).$$
\end{proof}

	\begin{lem}\label{lem:dim_Qorth}
		If $\dim X/V<+\infty$ , then
		$$\dim V^\mathfrak{Q}/\ker \mathfrak{Q}=\dim(X/V^{\mathfrak{Q}\mathfrak{Q}}).$$
		If $\dim W<+\infty$, then
		$$\dim X/W^\mathfrak{Q} =\dim (W+\ker \mathfrak{Q})/\ker \mathfrak{Q} .$$
	\end{lem}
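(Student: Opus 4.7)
The plan is to reduce both identities to the non-degenerate case by passing to the quotient $\bar X:=X/\ker\mathfrak{Q}$, on which $\mathfrak{Q}$ descends to a non-degenerate Hermitian form $\bar{\mathfrak{Q}}$. Writing $\mathcal{P}:X\to\bar X$ for the projection, the identity $\mathcal{P}(U^{\mathfrak{Q}})=(\mathcal{P}U)^{\bar{\mathfrak{Q}}}$ (already used in the excerpt) together with $\ker\mathfrak{Q}\subseteq U^{\mathfrak{Q}}$ gives
\[V^{\mathfrak{Q}}/\ker\mathfrak{Q}\cong\bar V^{\bar{\mathfrak{Q}}},\quad X/V^{\mathfrak{Q}\mathfrak{Q}}\cong\bar X/\bar V^{\bar{\mathfrak{Q}}\bar{\mathfrak{Q}}},\quad (W+\ker\mathfrak{Q})/\ker\mathfrak{Q}\cong\bar W,\]
with $\bar V=\mathcal{P}V$ and $\bar W=\mathcal{P}W$. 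Thus I only need to verify the two dimension equalities for $\bar{\mathfrak{Q}}$ on $\bar X$, where non-degeneracy is available.

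I would handle the second identity first, since it sets up the key map. Fix a basis $\bar w_1,\dots,\bar w_k$ of $\bar W$ (so $k=\dim\bar W$) and consider
\[L:\bar X\to \mathbf{C}^k,\qquad L(\bar x)=(\bar{\mathfrak{Q}}(\bar w_i,\bar x))_{i=1}^k.\]
By construction $\ker L=\bar W^{\bar{\mathfrak{Q}}}$, so the point is surjectivity of $L$. This reduces to the linear independence of $\{\bar{\mathfrak{Q}}(\bar w_i,\cdot)\}$ in $\bar X^*$: any nontrivial relation $\sum c_i\bar{\mathfrak{Q}}(\bar w_i,\cdot)\equiv 0$ would force $\sum c_i\bar w_i\in\ker\bar{\mathfrak{Q}}=\{0\}$, contradicting the choice of basis. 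Rank-nullity then yields $\dim\bar X/\bar W^{\bar{\mathfrak{Q}}}=k$, which is the second identity.

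For the first identity I would pick any finite-dimensional complement $\bar W$ of $\bar V$ in $\bar X$ (possible since $\dim\bar X/\bar V<\infty$), set $k=\dim\bar W$, and reuse the map $L:\bar X\to\bar W^*$. The critical observation is that the restriction $L|_{\bar V^{\bar{\mathfrak{Q}}}}$ has kernel
\[\bar V^{\bar{\mathfrak{Q}}}\cap\bar W^{\bar{\mathfrak{Q}}}=(\bar V+\bar W)^{\bar{\mathfrak{Q}}}=\bar X^{\bar{\mathfrak{Q}}}=\{0\},\]
using non-degeneracy at the last step; hence $L|_{\bar V^{\bar{\mathfrak{Q}}}}$ is injective and $\dim\bar V^{\bar{\mathfrak{Q}}}=\dim L(\bar V^{\bar{\mathfrak{Q}}})$. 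A short check shows that the annihilator of $L(\bar V^{\bar{\mathfrak{Q}}})$ inside $\bar W$ equals $\bar W\cap\bar V^{\bar{\mathfrak{Q}}\bar{\mathfrak{Q}}}$, and since $\bar V\subseteq\bar V^{\bar{\mathfrak{Q}}\bar{\mathfrak{Q}}}$ one has $\bar V^{\bar{\mathfrak{Q}}\bar{\mathfrak{Q}}}=\bar V\oplus(\bar W\cap\bar V^{\bar{\mathfrak{Q}}\bar{\mathfrak{Q}}})$. Consequently $\dim\bar V^{\bar{\mathfrak{Q}}\bar{\mathfrak{Q}}}/\bar V=k-\dim L(\bar V^{\bar{\mathfrak{Q}}})$, and combining yields $\dim\bar X/\bar V^{\bar{\mathfrak{Q}}\bar{\mathfrak{Q}}}=k-\dim\bar V^{\bar{\mathfrak{Q}}\bar{\mathfrak{Q}}}/\bar V=\dim\bar V^{\bar{\mathfrak{Q}}}$, as required.

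The main obstacle is psychological rather than technical: one is tempted to first prove the stronger identity $V^{\mathfrak{Q}\mathfrak{Q}}=V+\ker\mathfrak{Q}$, which need not hold in infinite dimensions (its failure is precisely measured by the cokernel of $L|_{\bar V^{\bar{\mathfrak{Q}}}}$ in $\bar W^*$). The correct strategy is to sidestep that question and read both dimensions off the finite-dimensional target $\bar W^*$, where annihilators are exact. Non-degeneracy of $\bar{\mathfrak{Q}}$ enters only twice, in the linear-independence argument giving surjectivity of $L$ and in the identity $\bar X^{\bar{\mathfrak{Q}}}=\{0\}$ giving injectivity of $L|_{\bar V^{\bar{\mathfrak{Q}}}}$.
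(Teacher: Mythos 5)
Your proposal is correct, and it proves exactly what the lemma claims. It rests on the same underlying principle as the paper's proof --- a finite-rank sesquilinear pairing whose two ``coranks'' coincide --- but the execution is genuinely different: the paper pairs $X/V$ directly against $V^{\mathfrak{Q}}$, realizes the pairing by an operator $A$ with $\ker A=V^{\mathfrak{Q}\mathfrak{Q}}/V$ and $\ker A^{*}=\ker\mathfrak{Q}$, and equates the two codimensions (and argues the $W$-case symmetrically), whereas you first pass to $\bar X=X/\ker\mathfrak{Q}$ where $\bar{\mathfrak{Q}}$ is non-degenerate, choose a finite-dimensional complement $\bar W$ of $\bar V$, and read both dimensions off the single map $L$ into $\mathbf{C}^{k}$ using injectivity/surjectivity plus annihilator exactness inside the finite-dimensional $\bar W$. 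What your route buys: it is purely algebraic, it avoids the paper's implicit appeal to an adjoint $A^{*}$ (which, as written, presupposes an inner product on $V^{\mathfrak{Q}}$ or a reformulation in the dual space), and your closing remark correctly isolates why only the dimension identities are free while $V^{\mathfrak{Q}\mathfrak{Q}}=V+\ker\mathfrak{Q}$ is not --- indeed the paper needs a separate Hilbert-space/Fredholm lemma for that equality in the application. Two trivial touch-ups: for the second identity you also use, and should state, the isomorphism $X/W^{\mathfrak{Q}}\cong\bar X/\bar W^{\bar{\mathfrak{Q}}}$, which follows from $\mathcal{P}(W^{\mathfrak{Q}})=(\mathcal{P}W)^{\bar{\mathfrak{Q}}}$ and $\ker\mathfrak{Q}\subseteq W^{\mathfrak{Q}}$ exactly as for your three displayed isomorphisms; and since $\mathfrak{Q}$ is sesquilinear, $L$ (or the functionals $\bar{\mathfrak{Q}}(\bar w_i,\cdot)$) may be conjugate-linear in one slot, which changes none of the dimension counts but deserves a word.
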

	\begin{proof}
Consider the  sesquilinear forms $\tilde{\mathfrak{Q}}: X/V\times V^\mathfrak{Q} \to \mathbf{C}$ such that $\tilde{\mathfrak{Q}}(x+V,u)=\mathfrak{Q}(x,u)$ with $x+V\in X/V, u\in V^\mathfrak{Q}$. It is well-defined and it induces a finite dimensional linear map $A:X/V\to V^\mathfrak{Q}$ such that $\tilde{\mathfrak{Q}}(p,q)=(Ap,q)$ with $p\in X/V,q\in V^\mathfrak{Q}$. Note that $\ker A=V^{\mathfrak{Q}\mathfrak{Q}}/V$ and $\ker A^*=\ker \mathfrak{Q}$. Then we have $\dim X/V-\dim V^{\mathfrak{Q}\mathfrak{Q}}/V=\dim V^\mathfrak{Q}-\dim\ker \mathfrak{Q}$. It follows that
$$\dim X/(V+\ker \mathfrak{Q})=\dim X/V^{\mathfrak{Q}\mathfrak{Q}}=\dim V^\mathfrak{Q}/\ker \mathfrak{Q}.$$
		
Similarly if we consider sesquilinear form on $X/W^\mathfrak{Q}\times W $, then we get
		$$\dim X/W^\mathfrak{Q} =\dim W/(W\cap \ker \mathfrak{Q})=\dim (W+\ker \mathfrak{Q})/\ker \mathfrak{Q}.$$
\end{proof}
\begin{lem}\label{lem:abstract_re_morse_1}
Assume that $V^{\mathfrak{Q}\mathfrak{Q}}=V$,  $\dim X/V<+\infty$,  $V^\mathfrak{Q}\subset V$. Then we have
		$$m^-(\mathfrak{Q})-m^-(\mathfrak{Q}|_V)=\dim (V^\mathfrak{Q}/\ker \mathfrak{Q}).$$
	\end{lem}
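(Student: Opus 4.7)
My plan is to reduce to the case when $\mathfrak{Q}$ is non-degenerate and then use a Witt-type decomposition of $X$ into an orthogonal sum of a hyperbolic piece (which absorbs the whole isotropic subspace $V^{\mathfrak{Q}}$) and a non-degenerate complement sitting inside $V$. First, because $\ker\mathfrak{Q}\subset V^{\mathfrak{Q}}\subset V$, passing to $\overline X=X/\ker\mathfrak{Q}$ sends $V$, $V^{\mathfrak{Q}}$ to $\overline V$, $\overline{V^{\mathfrak{Q}}}$ and preserves all the hypotheses. The induced form $\overline{\mathfrak{Q}}$ is non-degenerate, and any negative subspace of $\mathfrak{Q}$ meets $\ker\mathfrak{Q}$ trivially, so $m^-(\mathfrak{Q})=m^-(\overline{\mathfrak{Q}})$ and $m^-(\mathfrak{Q}|_V)=m^-(\overline{\mathfrak{Q}}|_{\overline V})$. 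Thus it suffices to show, under the extra assumption $\ker\mathfrak{Q}=0$, that $m^-(\mathfrak{Q})-m^-(\mathfrak{Q}|_V)=\dim V^{\mathfrak{Q}}$.

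Next, set $L=V^{\mathfrak{Q}}$ and $d=\dim L$; the inclusion $L\subset V=L^{\mathfrak{Q}}$ makes $L$ totally isotropic, and Lemma~\ref{lem:dim_Qorth} gives $d=\dim X/V$. I would pick any algebraic complement $W_0$ of $V$ in $X$ and verify that the pairing $L\times W_0\to\mathbf{C}$ induced by $\mathfrak{Q}$ is non-degenerate on both sides (its left kernel sits in $\ker\mathfrak{Q}=0$, its right kernel in $L^{\mathfrak{Q}}\cap W_0=V\cap W_0=0$). Taking a basis $\{l_i\}$ of $L$ and the dual basis $\{w_i\}$ of $W_0$ with $\mathfrak{Q}(l_i,w_j)=\delta_{ij}$, Hermiticity of the Gram matrix $a_{ij}:=\mathfrak{Q}(w_i,w_j)$ implies that the vectors $w_i':=w_i-\tfrac{1}{2}\sum_j a_{ij}l_j$ span a totally isotropic subspace $L'$ of dimension $d$, still paired nondegenerately with $L$, and satisfying $L'\cap V=0$. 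Setting $H:=L\oplus L'$, the restriction $\mathfrak{Q}|_H$ is hyperbolic of signature $(d,d)$, hence $X=H\oplus\mathfrak{W}$ is a $\mathfrak{Q}$-orthogonal direct sum with $\mathfrak{W}:=H^{\mathfrak{Q}}\subset L^{\mathfrak{Q}}=V$.

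Finally, using $V\cap H=L$ (which follows from $L'\cap V=0$) one sees $V=L\oplus\mathfrak{W}$, and both $X=H\oplus\mathfrak{W}$ and $V=L\oplus\mathfrak{W}$ are $\mathfrak{Q}$-orthogonal. Lemma~\ref{lem:Morse_additive} then yields
\[
m^-(\mathfrak{Q})=m^-(\mathfrak{Q}|_H)+m^-(\mathfrak{Q}|_{\mathfrak{W}})=d+m^-(\mathfrak{Q}|_{\mathfrak{W}}),\qquad m^-(\mathfrak{Q}|_V)=m^-(\mathfrak{Q}|_{\mathfrak{W}}),
\]
since a hyperbolic space of dimension $2d$ has negative index exactly $d$ and $\mathfrak{Q}|_L\equiv 0$. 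Subtracting and undoing the initial quotient gives $m^-(\mathfrak{Q})-m^-(\mathfrak{Q}|_V)=\dim L=\dim(V^{\mathfrak{Q}}/\ker\mathfrak{Q})$. The only real obstacle is the explicit Witt-type construction of the isotropic complement $L'$: once $X$ is orthogonally split as a hyperbolic piece plus a regular piece lying inside $V$, the index counting reduces to a single application of Lemma~\ref{lem:Morse_additive}.
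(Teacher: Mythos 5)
Your argument is correct, and it shares the paper's overall skeleton --- split $X$ $\mathfrak{Q}$-orthogonally into a finite-dimensional piece that absorbs $V^{\mathfrak{Q}}$ (mod kernel) and a piece lying inside $V$, then apply Lemma \ref{lem:Morse_additive} --- but the implementation is genuinely different in two places. You first quotient by $\ker\mathfrak{Q}$ to make the form non-degenerate, while the paper carries the kernel along; and, more substantively, you build an explicit Witt-type isotropic complement $L'$ dual to $L=V^{\mathfrak{Q}}$ (the Gram-matrix correction $w_i'=w_i-\tfrac12\sum_j a_{ij}l_j$ does kill the diagonal block by Hermitian symmetry), so the finite-dimensional piece $H=L\oplus L'$ is visibly hyperbolic and its negative index is read off as $d$. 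The paper instead takes an arbitrary complement $W$ of $V$, proves $X=V^{\mathfrak{Q}}+W^{\mathfrak{Q}}$ to get the orthogonal decomposition of $X$ into $V\cap W^{\mathfrak{Q}}$ and $V^{\mathfrak{Q}}+W$ (and of $V$ into $V\cap W^{\mathfrak{Q}}$ and $V^{\mathfrak{Q}}$), and then computes $m^-(\mathfrak{Q}|_{V^{\mathfrak{Q}}+W})$ indirectly: that restriction has kernel $\ker\mathfrak{Q}$ and contains the isotropic subspace $V^{\mathfrak{Q}}$ of half its reduced dimension, so $m^{+}\geq k$ and $m^{-}\geq k$ force $m^{-}=k$. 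Note that your $L'$ has dimension $d=\dim X/V$ (via Lemma \ref{lem:dim_Qorth}) and meets $V$ trivially, so it is itself a complement of $V$; your construction thus amounts to choosing the paper's $W$ especially well, trading their signature-counting step for an explicit hyperbolic normal form at the cost of the dual-basis construction and the preliminary quotient (where you correctly use $\ker\mathfrak{Q}\subset V^{\mathfrak{Q}}\subset V$ to see that the hypotheses and both Morse indices are preserved). Both proofs rest on the same two lemmas, and both share the same tacit caveat at the final subtraction, namely that the Morse indices involved are finite, as they are where Lemma \ref{lem:abstract_re_morse_1} is applied.
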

	\begin{proof}
		There is a finite dimensional linear subspace $W\subset X$ such that
		$X=V\oplus W$.
		Then we have $(W^\mathfrak{Q}\cap V^\mathfrak{Q})=(W+V)^\mathfrak{Q}=\ker \mathfrak{Q} $.
		Note that $\ker \mathfrak{Q}\subset V^\mathfrak{Q}\subset V$. We have $W\cap \ker \mathfrak{Q}=\{0\}$.
		Then by Lemma \ref{lem:dim_Qorth}, we have
		$\dim (X/W^\mathfrak{Q})=\dim W$ and $\dim X/V=\dim V^\mathfrak{Q}/\ker \mathfrak{Q} $.
		It follows that
\begin{equation}
\begin{aligned}
\dim X/(V^\mathfrak{Q}+W^\mathfrak{Q})&=\dim X/W^\mathfrak{Q}-\dim (V^\mathfrak{Q}+W^\mathfrak{Q})/W^\mathfrak{Q}=\dim W-\dim V^\mathfrak{Q}/(W^\mathfrak{Q}\cap V^\mathfrak{Q})\\
			&=\dim W-\dim V^\mathfrak{Q}/\ker \mathfrak{Q}=\dim X/V-\dim X/V =0.
\end{aligned}
\end{equation}
		It follows that $X=V^\mathfrak{Q}+W^\mathfrak{Q}$. Then we have $V=(V^\mathfrak{Q}+W^\mathfrak{Q})\cap V=V^\mathfrak{Q}+(V\cap W^\mathfrak{Q})$ and $X=V+W=V^\mathfrak{Q}+W+(V\cap W^\mathfrak{Q})$.
		Note that $(V\cap W^\mathfrak{Q})^\mathfrak{Q}\supset (V^\mathfrak{Q}+W^\mathfrak{Q})$.  	By Lemma \ref{lem:Morse_additive}, we have
	$m^-(\mathfrak{\mathfrak{Q}}|_V)=m^-(\mathfrak{\mathfrak{Q}}|_{W^\mathfrak{\mathfrak{Q}}\cap V})+m^-(\mathfrak{\mathfrak{Q}}|_{V^\mathfrak{\mathfrak{Q}}}) $, and $m^-(\mathfrak{\mathfrak{Q}})=m^-(\mathfrak{\mathfrak{Q}}|_{W^\mathfrak{\mathfrak{Q}}\cap V})+m^-(\mathfrak{\mathfrak{Q}}|_{V^\mathfrak{\mathfrak{Q}}+W})$.
	Since $V^\mathfrak{Q}\subset V$, then $\mathfrak{Q}|_{V^\mathfrak{Q}}=0$, and it follows that $m^-(\mathfrak{Q}|_V)=m^-(\mathfrak{Q}|_{W^\mathfrak{Q}\cap V})$.
		
		Let $\tilde{\mathfrak{Q}}=\mathfrak{Q}|_{V^\mathfrak{Q}+W}$. Then $\ker \tilde{\mathfrak{Q}}=\ker \mathfrak{Q}\subset V^\mathfrak{Q}$.
		Since $V^\mathfrak{Q}\subset V$, we have $\mathfrak{Q}|_{V^\mathfrak{Q}}=0$. It follows that
		$m^\pm(\tilde{\mathfrak{Q}})\ge \dim (V^\mathfrak{Q}/\ker \mathfrak{Q})$. Let $k=\dim W$. Since $\dim V^\mathfrak{Q}/\ker \mathfrak{Q}=\dim W$, we have $2k\le m^-(\tilde{\mathfrak{Q}})+m^+(\tilde{\mathfrak{Q}})\le \dim (V^\mathfrak{Q}+W)/\ker \mathfrak{Q}=2k$. It follows that $m^-(\tilde{\mathfrak{Q}})=k=\dim (V^\mathfrak{Q}/\ker \mathfrak{Q})$.
	\end{proof}
	Then we can prove Theorem \ref{thm:abstract_re_morse}:
	\begin{proof}[Proof of Theorem \ref{thm:abstract_re_morse}]
		Let $W=V+V^\mathfrak{Q}$. Then there is a linear subspace $U\subset V^\mathfrak{Q}$ such that $W=V\oplus U$. Since $\dim (X/V) <+\infty$, we see that $\dim U<\infty $.
		For each $x\in V, y\in U\subset V^\mathfrak{Q}$, we have $\mathfrak{Q}(x,y)=0$.
		It follows that $m^-(\mathfrak{Q}|_W)-m^-(\mathfrak{Q}|_{V})=m^-(U)$.
		Note that $V^\mathfrak{Q}=V^\mathfrak{Q}\cap(U+V)=(V\cap V^\mathfrak{Q})\oplus U$.
		Since $V^\mathfrak{Q}\cap V \subset V^{\mathfrak{Q}\mathfrak{Q}} $, we have $m^-(\mathfrak{Q}|_{V^\mathfrak{Q}})=m^-(\mathfrak{Q}|_U)$.
		Then we have $m^-(\mathfrak{Q}|_{V+V^\mathfrak{Q}})-m^-(\mathfrak{Q}|_{V})=m^-(\mathfrak{Q}|_{V^\mathfrak{Q}})$.
		
		Since $V^{\mathfrak{Q}\mathfrak{Q}}=V+\ker \mathfrak{Q}$, we have $V+V^\mathfrak{Q}=V^{\mathfrak{Q}\mathfrak{Q}}+V^\mathfrak{Q}$. By Lemma \ref{lem:dim_Qorth}, we have
\begin{equation}
\begin{aligned}
\dim X/(V^{\mathfrak{Q}\mathfrak{Q}}+V^\mathfrak{Q})&=\dim(X/V^{\mathfrak{Q}\mathfrak{Q}})-\dim(V^{\mathfrak{Q}\mathfrak{Q}}+V^\mathfrak{Q})/V^{\mathfrak{Q}\mathfrak{Q}}=\dim V^\mathfrak{Q}/\ker \mathfrak{Q}-\dim V^\mathfrak{Q}/(V^\mathfrak{Q}\cap V^{\mathfrak{Q}\mathfrak{Q}})\\
&=\dim (V^\mathfrak{Q}\cap V^{\mathfrak{Q}\mathfrak{Q}})/\ker \mathfrak{Q}=\dim (X/(V^{\mathfrak{Q}\mathfrak{Q}}+V^\mathfrak{Q})^{\mathfrak{Q}\mathfrak{Q}}).
\end{aligned}
\end{equation}
It follows that $(V+V^\mathfrak{Q})^{\mathfrak{Q}\mathfrak{Q}}=V+V^\mathfrak{Q}$. Then by Lemma \ref{lem:abstract_re_morse_1}, we have
		$$ m^-(\mathfrak{Q})-m^-(\mathfrak{Q}|_{V+V^\mathfrak{Q}})=\dim (V^{\mathfrak{Q}\mathfrak{Q}}\cap V^\mathfrak{Q})/\ker \mathfrak{Q}=\dim (V\cap V^\mathfrak{Q}+\ker \mathfrak{Q})/\ker \mathfrak{Q}.$$
\end{proof}

	\begin{lem}
		We assume that $X$ is a Hilbert space, $H$ is a closed subspace of $X$,
		and $\mathfrak{Q}(x,y)=(Ax,y)$ with bounded Fredholm self-adjoint operator $A$.
		Then we have
		\begin{equation}\label{lem perp}
		H^{\mathfrak{Q}\mathfrak{Q}}=H+\ker \mathfrak{Q}
		\end{equation}
	\end{lem}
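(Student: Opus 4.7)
The plan is to translate the claim into a statement about $A(H)$, and then reduce the whole thing to showing that $A(H)$ is already closed, which is where the Fredholm hypothesis enters. For any subspace $V\subset X$ one has
\[
V^{\mathfrak{Q}}=\{x\in X : (Ax,y)=0,\ \forall\,y\in V\}=A^{-1}(V^{\perp}),
\]
and $\ker\mathfrak{Q}=\ker A$. In particular $H^{\mathfrak{Q}}=\ker(P_{H}A)$, where $P_{H}$ is the orthogonal projection onto $H$. Since $A$ and $P_{H}$ are self-adjoint, the standard identity $(\ker T)^{\perp}=\overline{\Ran(T^{\ast})}$ applied to $T=P_{H}A$ gives
\[
(H^{\mathfrak{Q}})^{\perp}=\overline{\Ran(AP_{H})}=\overline{A(H)},
\]
and applying $V\mapsto A^{-1}(V^{\perp})$ once more yields $H^{\mathfrak{Q}\mathfrak{Q}}=A^{-1}\bigl(\overline{A(H)}\bigr)$.

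The remaining task is to show that $A(H)$ is closed; granted this, the identity $A^{-1}(A(H))=H+\ker A=H+\ker\mathfrak{Q}$ is immediate, and combined with the previous display gives \eqref{lem perp}. To prove closedness of $A(H)$, I would use the Fredholm hypothesis twice. First, because $\ker A$ is finite-dimensional, $H+\ker A$ is a closed subspace of $X$, so its image $\pi(H)=\pi(H+\ker A)$ is closed in the quotient Hilbert space $X/\ker A$, where $\pi$ denotes the canonical projection. Second, the operator $A$ descends to a bounded injection $\bar A\colon X/\ker A\to X$ whose image equals $\Ran(A)$, and $\Ran(A)$ is closed because $A$ is Fredholm. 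By the open mapping theorem, $\bar A$ is then a topological isomorphism from $X/\ker A$ onto $\Ran(A)$, so $A(H)=\bar A(\pi(H))$ is closed in $X$.

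The main obstacle is precisely this closedness step: in general, the image of a closed subspace under a bounded self-adjoint operator need not be closed, and the equality $H^{\mathfrak{Q}\mathfrak{Q}}=H+\ker\mathfrak{Q}$ would break down without extra structure. The Fredholm hypothesis supplies the two ingredients required—finite-dimensionality of $\ker A$ (so that $H+\ker A$ is closed) and closedness of $\Ran(A)$ (so that the factored map $\bar A$ is a topological isomorphism)—after which the rest of the argument is purely algebraic.
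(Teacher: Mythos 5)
Your argument is correct: each step (the identities $H^{\mathfrak{Q}}=\ker(P_HA)=A^{-1}(H^{\perp})$, $(H^{\mathfrak{Q}})^{\perp}=\overline{A(H)}$, hence $H^{\mathfrak{Q}\mathfrak{Q}}=A^{-1}(\overline{A(H)})$, the closedness of $A(H)$ via the induced injection on $X/\ker A$, and finally $A^{-1}(A(H))=H+\ker A$) checks out, and Fredholmness is used exactly where it must be. The route, however, differs from the paper's. The paper never proves that $A(H)$ is closed and never invokes the open mapping theorem: it writes $H^{\mathfrak{Q}}=A^{-1}(H^{\perp})$, so that $A(H^{\mathfrak{Q}})=H^{\perp}\cap\Ran A$, and then computes directly $H^{\mathfrak{Q}\mathfrak{Q}}=\bigl(A(H^{\mathfrak{Q}})\bigr)^{\perp}=(H^{\perp}\cap\Ran A)^{\perp}=\overline{H^{\perp\perp}+(\Ran A)^{\perp}}=\overline{H+\ker A}=H+\ker A$, using the lattice identity $(M\cap N)^{\perp}=\overline{M^{\perp}+N^{\perp}}$ for the two closed subspaces $H^{\perp}$ and $\Ran A$ (closed range is where Fredholmness enters there), together with $(\Ran A)^{\perp}=\ker A$ and the elementary fact that a closed subspace plus a finite-dimensional one is closed. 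So the paper stays entirely at the level of orthogonal-complement identities, while you reduce everything to the closedness of $A(H)$ and pay for it with the quotient/open-mapping argument; your version is a bit heavier but yields the closedness of $A(H)$ as a genuine byproduct, whereas the paper's is shorter and more elementary. Both proofs consume the Fredholm hypothesis through the same two facts, $\dim\ker A<\infty$ and closedness of $\Ran A$.
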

	\begin{proof}
		Since $A$ is a Fredholm self-adjoint operator, then
		$\dim\ker A<\infty$, $\ker A=(\image A)^\perp$ and $\ker A=\ker \mathfrak{Q}$.
		Since $(Ax,y)=(x,Ay)$, we have
		$H^\mathfrak{Q}=(AH)^\perp=A^{-1}(H^\perp)=A^{-1}(H^\perp\cap \image A)$.
		It follows that
		$$H^{\mathfrak{Q}\mathfrak{Q}}=(A(A^{-1}(H^\perp\cap \image A)))^\perp =(H^\perp\cap \image A)^{\perp}=\overline{H^{\perp\perp}+\image A^{\perp}}=\overline{H^{\perp\perp}+\ker A}.$$
		Since $H$ is closed and $\dim \ker A<+\infty$, then $H^{\perp\perp}=H$ and $(H+\ker A)$ is closed. Then we have
		$$H^{\mathfrak{Q}\mathfrak{Q}}=\overline{H^{\perp\perp}+\ker A}=H+\ker \mathfrak{Q} .$$
	\end{proof}

\subsection{Morse index Theorem}

Let $A$ be the Hermitian matrix defined by $\Lambda_0$.
For a stationary point $x$ of system \eqref{eq:L-S}, its index form $I_x$ is given by
\begin{equation}\label{eq:index form}
I_x(\xi,\eta)=\int_{ 0 }^{ T }\{\langle P\dot{\xi},\dot{\eta}\rangle+\langle Q\xi,\dot{\eta}\rangle+\langle Q^{T}\dot{\xi},\eta\rangle+\langle R\xi,\eta\rangle\} dt-\langle A\begin{bmatrix} \xi(0)\\ \xi(T) \end{bmatrix},\begin{bmatrix} \eta(0)\\ \eta(T) \end{bmatrix}\rangle
 \end{equation}
 on $$H_{\Lambda_0}([0,T])=\{\xi\in W^{1,2}([0,T],\mathbf{C}^n)\ | \ (\xi(0),\xi(T))\in V(\Lambda_0)\}.$$ For convenience, we will drop the subscript $x$ of $I_x$.  By \cite[Theorem A2]{M}, the space $W^{1,2}(0,T)$ can be  continuously imbedded into
 the space
 $$
 \mathbf{C}^{0}=\{u\in \mathbf{C}^{0}(0,T) \ | \  \|u\| _{\mathbf{C}^0}=\sup_{0<t<T}|u(t)|<\infty\}
$$
  and for any $\varepsilon >0$, there exists a constant $C_{\varepsilon}$ such that
  \begin{equation}\label{eq:immbed inequality}
\|u\| _{\mathbf{C}^0}\leq \varepsilon \|\dot{u}\|_{L^{2}(0,T)}
  +C_{\varepsilon}\|u\|_{L^{2}(0,T)}.
  \end{equation}
By \eqref{eq:index form} and \eqref{eq:immbed inequality} we have
\begin{equation}\label{eq:equivalent norm 1}
\begin{aligned}
I(\xi,\xi)&\geq c_1\Vert\dot{\xi}\Vert^2_{L^2}+c_2\Vert\xi\Vert_{L^2}\Vert\dot{\xi}\Vert_{L^2}+c_3\Vert\xi\Vert^2_{L^2}
-\varepsilon_1\Vert\dot{\xi}\Vert_{L^2}^2-C_{\varepsilon_1}\Vert\xi\Vert_{L^2}^2\\
&\geq c_1\Vert\dot{\xi}\Vert^2_{L^2}-\varepsilon_2c_2\Vert\dot{\xi}\Vert_{L^2}^2-C_{\varepsilon_2}c_2\Vert\xi\Vert_{L^2}^2+c_3\Vert\xi\Vert^2_{L^2}
-\varepsilon_1\Vert\dot{\xi}\Vert_{L^2}^2-C_{\varepsilon_1}\Vert\xi\Vert_{L^2}^2\\
&=c_4(\Vert\dot{\xi}\Vert^2_{L^2}+\Vert\xi\Vert_{L^2}^2)+c_5\Vert\xi\Vert_{L^2}^2,
\end{aligned}
\end{equation}
 where $c_i, i=1,\ldots, 5$ and $\varepsilon_i, C_{\varepsilon_i}, i=1,2$ are constants such that $c_1, c_4>0, \varepsilon_i$ is small enough. Moreover,  the second inequality comes from Young inequality. Similarly, for some suitable constants $C_4, C_5$ we have
 \begin{equation}\label{eq:equivalent norm 2}
I(\xi,\xi)\leq C_4(\Vert\dot{\xi}\Vert^2_{L^2}+\Vert\xi\Vert_{L^2}^2)+C_5\Vert\xi\Vert_{L^2}^2.
\end{equation}
 Then we can choose a constant $C_{H_{\Lambda_0}}$ large enough such that $I(\xi,\xi)+C_{H_{\Lambda_0}}\Vert\xi\Vert_{L^2}^2$ is a norm equivalent with the $W^{1,2}$-norm on $H_{\Lambda_0}([0,T])$ and will be denoted by $\Vert\cdot\Vert_{H_{\Lambda_0}}$.

 Recall the definition of operator $\mathcal{A}$ in \eqref{eq:S-L equation} and integral by parts, then for every $\xi\in E_{\Lambda_0}(0,T)$ and $\eta\in H_{\Lambda_0}([0,T])$ we have
 \begin{equation}\label{eq:relation between operator and index form}
\begin{aligned}
 \langle \mathcal{A}\xi,\eta\rangle_{L^2}&=\int_{ 0 }^{ T }\langle-\frac{d}{dt}( P\dot{\xi}+Q\xi)+Q^{T}\dot{\xi}+R\xi,\eta\rangle dt\\
 &=\int_{ 0 }^{ T }\{\langle P\dot{\xi},\dot{\eta}\rangle+\langle Q\xi,\dot{\eta}\rangle+\langle Q^{T}\dot{\xi},\eta\rangle +\langle R\xi,\eta\rangle\} dt -\langle\begin{bmatrix} -y(0)\\y(T) \end{bmatrix},\begin{bmatrix} \eta(0)\\ \eta(T) \end{bmatrix}\rangle\\
 &=\int_{ 0 }^{ T }\{\langle P\dot{\xi},\dot{\eta}\rangle+\langle Q\xi,\dot{\eta}\rangle+\langle Q^{T}\dot{\xi},\eta\rangle+\langle R\xi,\eta\rangle\} dt-\langle A\begin{bmatrix} \xi(0)\\ \xi(T) \end{bmatrix},\begin{bmatrix} \eta(0)\\ \eta(T) \end{bmatrix}\rangle\\
&=I(\xi,\eta),
\end{aligned}
\end{equation}
where $y(t)=P(t)\dot{\xi}(t)+Q(t)\xi(t)$. The third equation holds since $\xi$ satisfies the boundary condition $\Lambda_0$ in the form \eqref{eq:precise express of boundary condition}, therefore we can decompose $\begin{bmatrix} -y(0)\\y(T) \end{bmatrix}=\begin{bmatrix} -y_{1}(0)\\y_{1}(T) \end{bmatrix}+
\begin{bmatrix} -y_{2}(0)\\y_{2}(T) \end{bmatrix}$ such that $(-y_1(0), y_1(T))^T\in J_{2n} V^\perp$ and $(-y_2(0), y_2(T))^T=A(\xi(0),\xi(T))^T \in J_{2n} V$.

Let $m^-(I,\Lambda_0)$ be the dimension of the maximal negative definite subspace of the index form $I$ on $H_{\Lambda_0}([0,T])$.  The following theorem is standard, but for reader's convenience, we give details of the proof.
\begin{thm}\label{defi:Morse index}
For a critical point $x$ of the Lagrangian system \eqref{eq:L-S} with the boundary condition \eqref{eq:original general boundary condition}, \bea \dim\ker(I)=m^0(\mathcal{A}), \quad  m^-(I,\Lambda_0)=m^-(\mathcal{A}). \eea

\end{thm}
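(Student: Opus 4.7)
\bigskip

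\noindent\textbf{Proof proposal.} The plan is to prove the two identities together by combining the integration-by-parts formula \eqref{eq:relation between operator and index form} with the spectral theorem for $\mathcal{A}$.

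\emph{Step 1: Kernel identification.} The easy inclusion $\ker \mathcal{A}\subset \ker I$ is immediate from \eqref{eq:relation between operator and index form}, since any $\xi\in E_{\Lambda_0}$ with $\mathcal{A}\xi=0$ satisfies $I(\xi,\eta)=\langle \mathcal{A}\xi,\eta\rangle_{L^2}=0$ for all $\eta\in H_{\Lambda_0}$. For the reverse, suppose $\xi\in H_{\Lambda_0}$ and $I(\xi,\eta)=0$ for every $\eta\in H_{\Lambda_0}$. Testing against $\eta\in C_c^\infty((0,T),\mathbf{C}^n)\subset H_{\Lambda_0}$ shows $\xi$ is a weak solution of the Sturm--Liouville system \eqref{eq:S-L equation}; the Legendre convexity $P(t)>0$ yields $\xi\in W^{2,2}$ by the standard elliptic regularity for second-order systems. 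Integrating by parts in $I(\xi,\eta)$ for general $\eta\in H_{\Lambda_0}$ recovers a boundary pairing which must vanish for all admissible $(\eta(0),\eta(T))\in V(\Lambda_0)$; comparing with \eqref{eq:precise express of boundary condition} forces $(z(0),z(T))\in\Lambda_0$, i.e.\ $\xi\in E_{\Lambda_0}$. Thus $\xi\in\ker\mathcal{A}$ and $\dim\ker I=m^0(\mathcal{A})$.

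\emph{Step 2: Morse index via spectral decomposition.} By \eqref{eq:equivalent norm 1}, choosing $C$ large enough makes $I+C\|\cdot\|_{L^2}^2$ an inner product equivalent to the $W^{1,2}$-norm on $H_{\Lambda_0}$. Consequently $\mathcal{A}+C$ is a self-adjoint, positive-definite operator on $L^2$ with domain $E_{\Lambda_0}$, and the Rellich compact embedding $W^{1,2}\hookrightarrow L^2$ shows it has compact resolvent. Therefore $\mathcal{A}$ possesses a discrete spectrum $\{\lambda_k\}$ accumulating only at $+\infty$ with $L^2$-orthonormal eigenfunctions $\{e_k\}\subset E_{\Lambda_0}$. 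Write $V^-=\mathrm{span}\{e_k:\lambda_k<0\}$, $V^0=\ker\mathcal{A}$, and let $V^+$ be the $L^2$-closure of the remaining eigenspan. Using \eqref{eq:relation between operator and index form}, one checks $I(e_i,e_j)=\lambda_i\delta_{ij}$, so $V^-$ is a negative-definite subspace for $I$ of dimension $m^-(\mathcal{A})$; hence $m^-(I,\Lambda_0)\geq m^-(\mathcal{A})$.

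\emph{Step 3: The opposite inequality.} Given any $\xi\in H_{\Lambda_0}$, decompose in $L^2$ as $\xi=\xi^-+\xi^0+\xi^+$. Because $V^-\oplus V^0$ is finite-dimensional and contained in $E_{\Lambda_0}\subset H_{\Lambda_0}$, each component lies in $H_{\Lambda_0}$. The identity $I(e_k,\eta)=\lambda_k\langle e_k,\eta\rangle_{L^2}$ makes the three pieces $I$-orthogonal, giving
\[
I(\xi,\xi)=I(\xi^-,\xi^-)+I(\xi^+,\xi^+),
\]
with $I(\xi^-,\xi^-)\leq 0$. Expanding $\xi^+$ in the eigenbasis and approximating in the $\|\cdot\|_{H_{\Lambda_0}}$-norm (using that $\{e_k\}_{\lambda_k>0}$ is total in $V^+\cap H_{\Lambda_0}$ with respect to this norm) yields $I(\xi^+,\xi^+)=\sum_{\lambda_k>0}\lambda_k|\langle\xi,e_k\rangle_{L^2}|^2\geq 0$. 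Hence any subspace of $H_{\Lambda_0}$ on which $I<0$ injects into $V^-$ under the $L^2$-projection, proving $m^-(I,\Lambda_0)\leq m^-(\mathcal{A})$.

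\emph{Main obstacle.} The bulk of the argument is bookkeeping; the technical heart is Step 3, namely verifying that the $L^2$-spectral decomposition of $\mathcal{A}$ can be transferred to the form domain $H_{\Lambda_0}$ so that the series expansion of $\xi^+$ converges in the $H_{\Lambda_0}$-norm and the quadratic form splits as claimed. This is where the norm equivalence produced by \eqref{eq:equivalent norm 1}--\eqref{eq:equivalent norm 2} and the compact resolvent property of $\mathcal{A}+C$ do the essential work. A secondary, but genuine, point is the boundary regularity step in Step 1: extracting $(z(0),z(T))\in\Lambda_0$ from the vanishing of the boundary pairing for all $(\eta(0),\eta(T))\in V(\Lambda_0)$, which is where the precise form \eqref{eq:precise express of boundary condition} of $\Lambda_0$ must be used.
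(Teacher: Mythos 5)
Your proposal is correct and takes essentially the same route as the paper: the kernel is identified with $\ker\mathcal{A}$ by testing against compactly supported variations and then killing the boundary pairing using the explicit form of $\Lambda_0$, and the index comparison is done through the spectral decomposition of $\mathcal{A}$ together with the equivalence of the form norm $I+C\Vert\cdot\Vert_{L^2}^2$ with the $W^{1,2}$-norm, which is exactly how the paper shows the eigenfunctions are total in $H_{\Lambda_0}$ and that $I\geq 0$ on the closure of the nonnegative eigenspan. The only cosmetic difference is that the paper obtains regularity via the du Bois--Reymond lemma where you invoke elliptic regularity.
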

\begin{proof}

For the first equation we only need to prove that $\xi\in \ker I$ is equivalent to $\xi\in \ker \mathcal{A}$.

$\Leftarrow$ \quad It's obvious by \eqref{eq:relation between operator and index form}.

 $\Rightarrow$ \quad Assuming that $\xi \in \ker (I)$, then $I(\xi,\eta)=0$ for any $\eta \in H_{\Lambda_0}([0,T])$. So for every $\eta\in H_{\Lambda_0}([0,T])$ such that $\eta(0)=\eta(T)=0$, by \eqref{eq:index form} we have
 \begin{equation}
 \begin{aligned}
0=I(\xi,\eta)&= \int_{ 0 }^{ T }\{\langle P\dot{\xi},\dot{\eta}\rangle+\langle Q\xi,\dot{\eta}\rangle+\langle Q^{T}\dot{\xi},\eta\rangle+\langle R\xi,\eta\rangle\} dt -\langle A\begin{bmatrix} \xi(0)\\ \xi(T) \end{bmatrix},\begin{bmatrix} \eta(0)\\ \eta(T) \end{bmatrix}\rangle\\
&=\int_{ 0 }^{ T }\langle P\dot{\xi}+Q\xi,\dot{\eta}\rangle dt+\int_{ 0 }^{ T }\langle -\int_{ 0 }^{ t }(Q^{T}\dot{\xi}+R\xi)ds,\dot{\eta}\rangle dt\\
&=\int_{ 0 }^{ T }\langle P\dot{\xi}+Q\xi-\int_{ 0 }^{ t }(Q^{T}\dot{\xi}+R\xi)ds,\dot{\eta}\rangle dt.
\end{aligned}
\end{equation}
Then by $du \ Bois-Reymond $ theorem we have
 \begin{equation}\label{eq:variation of index form}
 P\dot{\xi}+Q\xi-\int_{ 0 }^{ t }(Q^{T}\dot{\xi}+R\xi)ds=constant
\end{equation}
Derivative on both sides of \eqref{eq:variation of index form} leads that $\xi$ is a solution of \eqref{eq:S-L equation}.
The rest is to prove $x$ satisfies condition \eqref{eq:original general boundary condition}.
Decomposing $\begin{bmatrix} -y(0)\\y(T) \end{bmatrix}$ into $\begin{bmatrix}-y(0)\\y(T) \end{bmatrix}=
\begin{bmatrix}-y_{1}(0)\\y_{1}(T) \end{bmatrix}+\begin{bmatrix}-y_{2}(0)\\y_{2}(T) \end{bmatrix}$,
where $\begin{bmatrix}-y_{1}(0)\\y_{1}(T) \end{bmatrix}\in J_{2n}V^{\perp}$ and $\begin{bmatrix}-y_{2}(0)\\y_{2}(T) \end{bmatrix}\in J_{2n}V$,
then by \eqref{eq:index form} we have
$$
I(x,\eta)=
\langle \begin{bmatrix}-y_{1}(0)\\y_{1}(T) \end{bmatrix}+\begin{bmatrix}-y_{2}(0)\\y_{2}(T) \end{bmatrix}-A\begin{bmatrix} \xi(0)\\ \xi(T) \end{bmatrix},\begin{bmatrix} \eta(0)\\ \eta(T) \end{bmatrix}\rangle.
$$
Since $\eta\in H_{\Lambda_0}([0,T])$ is arbitrary, then we can take $\eta$ such that
$\begin{bmatrix}-y_{2}(0)\\y_{2}(T) \end{bmatrix}-A\begin{bmatrix} \xi(0)\\ \xi(T) \end{bmatrix}=\begin{bmatrix} \eta(0)\\ \eta(T) \end{bmatrix}$,
then $\begin{bmatrix}-y_{2}(0)\\y_{2}(T) \end{bmatrix}-A\begin{bmatrix} \xi(0)\\ \xi(T) \end{bmatrix}=0$. This means the condition \eqref{eq:original general boundary condition} holds.

 In order to prove the second equation we decompose $E_{\Lambda_0}(0,T)$ into $E^{+}\oplus E^{0} \oplus E^{-}$, where $E^{+},E^{0}$ and $E^{-}$ are the eigenspaces of $\mathcal{A}$ in $E_{\Lambda_0}(0,T)$ corresponding to the positive, zero, negative eigenvalues respectively. For every eigenvector $\eta$ corresponding to a negative eigenvalue $\lambda$ of $\mathcal{A}$, by \eqref{eq:relation between operator and index form} we have
$$
 I(\eta,\eta)=\langle\mathcal{A}\eta, \eta\rangle=\langle\lambda\eta, \eta\rangle<0.
$$
Therefore, $m^-(I,\Lambda_0)\geq m^-(\mathcal{A})$.

Let $H_{\Lambda_0}([0,T])^-$ be the maximal negative subspace of index form $I$ and $\{e_1, \ldots, e_k\}$ be the orthogonal bases of $E^-$ and $\{e_{k+1}, e_{k+2},  \ldots\}$ be the orthogonal bases of $E^0\oplus E^+$. If $\xi\in H_{\Lambda_0}([0,T])$ satisfies that
\begin{equation}
\begin{aligned}
0&=\langle e_i, \xi\rangle_{H_{\Lambda_0}}=I(e_i, \xi)+C_{H_{\Lambda_0}}\langle e_i, \xi\rangle_{L^2}\\
&=\langle\mathcal{A}e_i, \xi\rangle_{L^2}+C_{H_{\Lambda_0}}\langle e_i, \xi\rangle_{L^2}=(\lambda_i+C_{H_{\Lambda_0}})\langle e_i, \xi\rangle_{L^2}, \quad \forall \ e_i,
\end{aligned}
\end{equation}where $\langle \cdot, \cdot\rangle_{H_{\Lambda_0}}$ is the inner product induced by $\Vert\cdot\Vert_{H_{\Lambda_0}}$, then there must hold
$\xi=0$. Therefore, we have
$$
span\{e_1, \ldots, e_k\}\oplus\overline{span\{e_{k+1}, e_{k+2},\ldots\}}=H_{\Lambda_0}([0,T])
$$
 Then if  $m^-(I,\Lambda_0)> m^-(\mathcal{A})$, there must exist $\xi\neq0\in H_{\Lambda_0}([0,T])^-\cap (E^{0} \oplus E^{+})$. But by \eqref{eq:index form} we have $I(\xi,\xi)\geq0$ which is a contradiction. Therefore there must hold $m^-(I,\Lambda_0)= m^-(\mathcal{A})$. We conclude the proof.

 \end{proof}







Now we will give the direct proof of Theorem \ref{thm:difference of two morse index}:
\begin{proof}[Proof of Theorem \ref{thm:difference of two morse index}]
Let $H_0=\{\xi\in W^{1,2}([0,T],\mathbf{C}^{n})\ | \ \xi(0)=\xi(T)=0\}$ and $H_0^I=\{\xi\in H \ | \ I(\xi,\eta)=0, \forall \eta \in H_0\}$. Obviously, if $\xi\in W^{2,2}([0,T],\mathbf{C}^{n})$, then integral by parts of index form \eqref{eq:index form}, we have
\begin{equation}\label{eq:part integral of index form}
 I(\xi,\eta)=\int_{ 0 }^{ T }\langle-\frac{d}{dt}( P\dot{\xi}+Q\xi)+Q^{T}\dot{\xi}+R\xi,\eta\rangle dt +
\langle \begin{bmatrix} -y(0)\\y(T) \end{bmatrix}-A\begin{bmatrix} \xi(0)\\ \xi(T) \end{bmatrix},\begin{bmatrix} \eta(0)\\ \eta(T) \end{bmatrix}\rangle,
\end{equation}
where $y(t)=P(t)\dot{\xi}(t)+Q(t)\xi(t)$. Actually, $H_0^I$ can be expressed in a specific way by
 $$
 H_0^I=\{\xi\in H \ | \  -\frac{d}{dt}(P(t)\dot{\xi}(t)+Q(t)\xi(t))+Q(t)^{T}\dot{\xi}(t)+R(t)\xi(t)=0\}.
 $$

Recall the discussions in \eqref{eq:equivalent norm 1}, \eqref{eq:equivalent norm 2}, we can choose a constant $C_{H_{\Lambda_0}}$ large enough such that $I(\xi,\xi)+C_{H_{\Lambda_0}}\Vert\xi\Vert_{L^2}^2$ is a norm equivalent with the $W^{1,2}$-norm on $H_{\Lambda_0}([0,T])$ and will be denoted by
$\Vert\cdot\Vert_{H_{\Lambda_0}}$. Moreover, we will denote $\langle \cdot, \cdot\rangle_{H_{\Lambda_0}}$ the inner product induced by $\Vert\cdot\Vert_{H_{\Lambda_0}}$. Note that the injection from $W^{1,2}(0,T)$ to $L^2(0,T)$ is compact, then for every $\xi,\eta\in H_{\Lambda_0}$, we have $I(\xi,\eta)=\langle \xi, \eta\rangle_{H_{\Lambda_0}}-C_{H_{\Lambda_0}}\Vert\xi\Vert_{L^2}^2=\langle (\mathcal{I}+\mathcal{T})\xi, \eta\rangle_{H_{\Lambda_0}}$ with compact operator $\mathcal{T}$. Obviously, $\mathcal{I}+\mathcal{T}$ is a bounded self-adjoint Fredholm operator. Since  $H_0$ is a close subspace of $H_{\Lambda_0}([0,T])$, then by Lemma \ref{lem:dim_Qorth} we have $H_0^{II}=H_0+\ker I$.  By Theorem \ref{thm:abstract_re_morse} we have
\begin{equation}\label{eq:formula of computing difference of two morse index}
m^-(\Lambda_0)-m^-(\Lambda_D)=m^-(I|_{H_0^I})+\dim((H_0\cap H_0^I+\ker I )/\ker I).
\end{equation}
For every $\xi\in H_0^I$, let $p=\begin{bmatrix} -y(0)\\y(T) \end{bmatrix}, q=\begin{bmatrix} \xi(0)\\\xi(T) \end{bmatrix}$ and $z=\begin{bmatrix} p\\q \end{bmatrix}$, then by \eqref{eq:part integral of index form} we have
$$
 I(\xi,\xi)=\langle p-Aq,q\rangle
 =\langle\begin{bmatrix} 0&-I_{2n}\\I_{2n}&0\end{bmatrix}\begin{bmatrix} p-Aq\\0 \end{bmatrix},\begin{bmatrix} p\\q \end{bmatrix}\rangle.
$$
Note that $\begin{bmatrix} p\\q \end{bmatrix} \in Gr(\gamma(T))\cap (\Lambda_0+\Lambda_D)$. So there exists some $p_1\in J_{2n}V^{\perp}$ such that
$\begin{bmatrix} p_1+Aq\\q\end{bmatrix}\in \Lambda_0$ and the split
$\begin{bmatrix} p\\q \end{bmatrix}=\begin{bmatrix} p_1+Aq\\q\end{bmatrix}+\begin{bmatrix} p-p_1-Aq\\0 \end{bmatrix}$ holds. Since $\langle\begin{bmatrix} 0&-I_{2n}\\I_{2n}&0\end{bmatrix}\begin{bmatrix} -p_1\\0 \end{bmatrix},\begin{bmatrix} p\\q \end{bmatrix}\rangle=0$, then by the definition of triple index there holds
$$
 I(\xi,\xi)=\langle\begin{bmatrix} 0&-I_{2n}\\I_{2n}&0\end{bmatrix}\begin{bmatrix} p-Aq-p_1\\0 \end{bmatrix},\begin{bmatrix} p\\q \end{bmatrix}\rangle=
 -\mathfrak{Q}(z,z).
 $$
and therefore we have
\begin{equation}\label{eq:morse and triple}
 m^-(I|_{H_0^I})=m^-(-\mathfrak{Q}(Gr(\gamma(T), \Lambda_0; \Lambda_D))=m^+(\mathfrak{Q}(Gr(\gamma(T), \Lambda_0; \Lambda_D)).
\end{equation}
Note that
\begin{equation}\label{eq:equation of dimension}
\begin{aligned}
\dim((H_0\cap H_0^I+\ker I )/\ker I)&=\dim(H_0\cap H_0^I)-\dim(H_0\cap H_0^I\cap\ker I )\\
&=\dim(Gr(\gamma(T))\cap\Lambda_0)-\dim(Gr(\gamma(T))\cap\Lambda_0\cap\Lambda_D),
\end{aligned}
\end{equation}
then by \eqref{eq:compute the triple index}, \eqref{eq:formula of computing difference of two morse index}, \eqref{eq:morse and triple} and \eqref{eq:equation of dimension}, we have
\begin{equation}
m^-(\Lambda_0)-m^-(\Lambda_D)=i(Gr(\gamma(T)),\Lambda_0,\Lambda_D).
 \end{equation}
 This complete the proof.
\end{proof}

Now we can present the proof of Corollary \ref{cor:relationship between M-M index}.
\begin{proof}[Proof of Corollary \ref{cor:relationship between M-M index}]
Denote the Maslov index for Dirichlet boundary condition by $\mu^-_d(z)$, then as well-known that (for example \cite[Remark 3.6]{HS} ) $\mu^-_d(z)-m^-(\Lambda_D)=n$. By direct computations, we have
\begin{equation}
\begin{aligned}
m^-(x)-\mu^-(z)&=(m^-(x)-m^-(\Lambda_D))+(m^-(\Lambda_D)-\mu^-_d(z))+(\mu^-_d(z)-\mu^-(z))\\
&=i(Gr(\gamma(T)),\Lambda_0,\Lambda_D)-n+s(Gr(I_{2n}),Gr(\gamma(T));\Lambda_0,\Lambda_D)\\
&=i(Gr(\gamma(T)),\Lambda_0,\Lambda_D)-n+i(Gr(I_{2n}),\Lambda_0,\Lambda_D)-i(Gr(\gamma(T)),\Lambda_0,\Lambda_D)\\
&=-n+i(Gr(I_{2n}),\Lambda_0,\Lambda_D).
\end{aligned}
\end{equation}
This means $\mu^-(z)-m^-(x)=n-i(Gr(I_{2n}),\Lambda_0,\Lambda_D)$ which completes the proof.
\end{proof}

 \begin{proof}[Proof of Theorem \ref{thm: morse index thm gene}]
By Theorem \ref{thm:difference of two morse index}, we have
\begin{equation}\label{eq:lambda conjugate theorem 1}
\begin{aligned}
m^-(\Lambda_s\oplus\Lambda_e)-m^-(\Lambda_s\oplus\Lambda^n_D)&=(m^-(\Lambda_s\oplus\Lambda_e)-m^-(\Lambda_D))-
(m^-(\Lambda_s\oplus\Lambda^n_D)-m^-(\Lambda_D))\\
&=i(Gr(\gamma(T)),\Lambda_s\oplus\Lambda_e, \Lambda_D))-i(Gr(\gamma(T)),\Lambda_s\oplus\Lambda^n_D, \Lambda_D))\\
&=i(\gamma(T))\Lambda_s,\Lambda_e, \Lambda^n_D),
\end{aligned}
\end{equation}
where the last equality is from \eqref{eq:any symplectic matrix boundary condition 1.1}, \eqref{eq:any symplectic matrix boundary condition 1.2}.

Then \eqref{eq:lambda conjugate theorem 1} 
derives that
\begin{equation}\label{eq:lambda conjugate theorem 3}
m^-(\Lambda_s\oplus\Lambda_e)=m^-(\Lambda_s\oplus\Lambda^n_D)+i(\gamma(T))\Lambda_s,\Lambda_e, \Lambda^n_D).
\end{equation}
The rest work is to compute $m^-(\Lambda_s\oplus\Lambda^n_D)$ by using Maslov index. In fact, for every $ t\in[0,T]$ and $z=\gamma(t)^{-1}(y,0)^T\in\gamma(t)\Lambda_s\cap\Lambda^n_D$, by some direct computations the crossing form is given by
$$
\langle -\gamma(t)^{T}J\dot{\gamma}(t)z,z\rangle=\langle \mathcal{B}(t)\begin{bmatrix}y\\0\end{bmatrix},\begin{bmatrix}y\\0\end{bmatrix}\rangle=\langle P^{-1}(t)y,y\rangle>0,
$$
where $\mathcal{B}(t)$ is defined in \eqref{eq:the form of B}. Moreover, one can easily check that $Gr(\gamma(0))\cap(\Lambda_s\oplus\Lambda^n_D)=Gr(I_{2n})\cap(\Lambda_s\oplus\Lambda^n_D)=\dim(\Lambda_s\cap\Lambda^n_D)$ and $Gr(\gamma(t))\cap(\Lambda_s\oplus\Lambda^n_D)=\dim(\gamma(t)\Lambda_s\cap\Lambda^n_D)$ for every $t\in(0,T)$. Then by the formula \eqref{eq:maslov index formula} we have
\begin{equation}\label{eq:lambda conjugate theorem 5}
\mu(\Lambda_s\oplus\Lambda^n_D, Gr(\gamma(t)))=\sum_{0<t<T}\dim(\gamma(t)\Lambda_s\cap\Lambda^n_D)+\dim(\Lambda_s\cap\Lambda^n_D).
\end{equation}
By Corollary \ref{cor:relationship between M-M index}, we have
\begin{equation}\label{eq:lambda conjugate theorem 6}
m^-(\Lambda_s\oplus\Lambda^n_D)=\mu(\Lambda_s\oplus\Lambda^n_D, Gr(\gamma(t)))-n+i(Gr(I_{2n}),\Lambda_s\oplus\Lambda^n_D, \Lambda_D).
\end{equation}
Recall that  $i(Gr(I_{2n}),\Lambda_s\oplus\Lambda^n_D, \Lambda_D)=n-\dim(\Lambda_s\cap\Lambda^n_D)$, together with \eqref{eq:lambda conjugate theorem 5} then \eqref{eq:lambda conjugate theorem 6} is converted into
\begin{equation}\label{eq:lambda conjugate theorem 9}
m^-(\Lambda_s\oplus\Lambda^n_D)=\sum_{0<t<T}\dim(\gamma(t)\Lambda_s\cap\Lambda^n_D).
\end{equation}
By \eqref{eq:lambda conjugate theorem 3} and \eqref{eq:lambda conjugate theorem 6} we have
\begin{equation}\label{eq:lambda conjugate theorem 10}
m^-(\Lambda_s\oplus\Lambda_e)=\sum_{0<t<T}\dim(\gamma(t)\Lambda_s\cap\Lambda^n_D)+i(\gamma(T))\Lambda_s, \Lambda_e, \Lambda^n_D).
\end{equation} This conclude the proof.
\end{proof}



\begin{exam}[Index Theorem of \cite{KD}]\label{exam:proof of kalish's result}

Let $(M,g)$ be a complete Riemannian manifold of dimension $d$ and $c:[0,T]\rightarrow M$ be a geodesic. Assume $P$ and $Q$ are two submanifolds of $M$ such that
$$
c(0)\in P, \quad \dot{c}(0)\perp P_{c(0)}, \quad c(T)\in Q, \quad \dot{c}(T)\perp Q_{c(T)},
$$
where the dimension of $Q_{c(T)}$ is $r$ such that $0\leq r<d$.

Let $\{e_1(0),\ldots,e_{d-1}(0)\}$ be an orthonormal basis of normal bundle $N_{c(0)}M$ at $c(0)$, then by parallel trivialization along $c$, we get an orthonormal basis $\{e_1(t),\ldots,e_{d-1}(t)\}$ of $N_{c(t)}M$. Let $\hat{R}=\hat{R}(\dot{c}, X)\dot{c}$ be the curvature tensor of the Levi-Civita connection. For every Jacobi field along $c$ given by $X(t)=\sum_{i=1}^{d-1}u_i(t)e_i(t)$, the Jacobi equation
$$-X''+\hat{R}X=0$$
will be reduced to
\begin{equation}\label{eq:reduced Jacobi equation}
-u''(t)+R(t)u(t)=0,
\end{equation}
where $u(t)=(u_1(t),\ldots,u_{d-1}(t))^T$ and $R(t)=[R_{i,j}(t)]_{i,j=1}^{d-1}$ with $R_{i,j}(t)=\langle \hat{R}(\dot{c}(t), e_i(t))\dot{c}(t), e_j(t)\rangle$.

Let $\hat{S}(t)$ be the second fundamental form of $P$ at $c(t)$ with respect to $\dot{c}(t)$. Denote $S(t)=[S_{i,j}(t)]_{i,j=1}^{d-1}$ with $S_{i,j}(t)=\langle \hat{S}(t)e_i(t), e_j(t)\rangle$. Then $R(t)$ and $S(t)$ are both symmetric. The index form
$$
\hat{I}(X,Y)=\int_0^T\langle \hat{R}X-X'', Y\rangle dt+\langle X'-\hat{S}X,Y\rangle|_0^T
$$
is converted into
$$I(u,v)=\int_0^T\langle Ru-u'', v\rangle dt+\langle u'-Su,v\rangle|_0^T.$$
Since $\hat{I}$ is defined on $H=\{V(t)=\sum_{i=1}^{d-1}u_i(t)e_i(t), V(0)\in P_{c(0)}, V(T)\in Q_{c(T)}\}$, namely, the linear space of continuous piecewise $C^\infty$ vector fields along $\gamma$ which are orthogonal to $\gamma$ and whose initial and final vectors are in $P_{c(0)}$ and $Q_{c(T)}$ respectively, then $I$ is defined on $\mathcal{H}=\{u\in W^{1,2}([0,T],\mathbf{R}^{d-1}), u(0)\in V_0, u(T)\in V_T\}$,    where $V_0, V_T\subset\mathbf{R}^{d-1}$  can be considered as the tangent space of $P$, $Q$ at $c(0)$, $c(T)$ separately. Note that the index form $\hat{I}$ is a little different from the original one in \cite{KD} since we have assumed that $X'$ is continuous, this is not an essential problem.

A Jacobi field $X$ is called $P$-Jacobi field if it is orthogonal to $c$ such that $X(0)\in P_{c(0)}$ and $X'(0)-S_0X(0)\perp P_{c(0)}$. If $X(t)=\sum_{i=1}^{d-1}u_i(t)e_i(t)$ is a $P$-Jacobi field then by direct computations we have $u(t)$ is a solution of system \eqref{eq:reduced Jacobi equation} such that $u(0)\in V_0$ and $u'(0)-S(0)u(0)\in V_0^{\bot}$. A $P$-focal point is a point $c(t_0), t_0\in(0,T]$ such that there exists a nonzero $P$-Jacobi field which vanishes at $t_0$.

Let $y(t)=u'(t)$ and $z(t)=(y(t), x(t))^T$, then system \eqref{eq:reduced Jacobi equation} is converted into Hamiltonian system \eqref{eq:Hamilton system} with
 $\mathcal{B}(t)=\begin{bmatrix}I_n&0\\0&-R\end{bmatrix}$ and the boundary condition is given by  $$z(0)\in\Lambda_s:=\{y(0)-S(0)x(0)\in J_{d-1}V^\perp_0\}, \,\ z(T)\in\Lambda_e:=\{y(T)-S(T)x(T)\in J_{d-1}V^\perp_T\}.$$  Let $\gamma(t)$ be the fundamental solution.
 In fact, after parallel trivialization, a $P$-focal point is equivalent to $\Lambda_s$ conjugate point which is defined above Theorem \ref{thm: morse index thm gene}, then by Theorem \ref{thm: morse index thm gene} we have
\begin{equation}
m^-(\Lambda_s\oplus\Lambda_e)=\sum_{0<t_0<T} \dim(\gamma(t_0)\Lambda_s\cap\Lambda^n_D)+ i(\gamma(T)\Lambda_s,\Lambda_e,\Lambda^n_D).
\end{equation}
Note that $\dim(\gamma(t_0)\Lambda_s\cap\Lambda^n_D)$ is just the multiplicity of $P$-focal point $c(t_0)$ and $i(\gamma(T)\Lambda_s,\Lambda_e,\Lambda^n_D)$ is just the difference $m^-(\Lambda_s\oplus\Lambda_e)-m^-(\Lambda_s\oplus\Lambda^n_D)$ which is equal to $i(A)$ in \cite{KD} under the non-degenerate assumption.  We conclude this example.

\end{exam}

\section{The stability of brake orbits}

Recall that a brake orbit $x(t)$ of system \eqref{eq:S-L equation} satisfies that
\begin{equation}\label{eq:brake of S-L system}
\dot{x}(0)=\dot{x}(T/2)=0, \quad x(T/2+t)=x(T/2-t), \quad x(T+t)=x(t), \quad \forall t\in \mathbf{R}.
\end{equation}
Let $\mathcal{N}=\begin{bmatrix}-I_n&0\\0&I_n\end{bmatrix}$, then the corresponding brake orbit $z(t)$ of system \eqref{eq:Hamilton system} satisfies that
\begin{equation}\label{eq:brake symmetry}
z(-t)=\mathcal{N}z(t), \quad z(t+T)=z(t), \quad \forall t\in \mathbf{R}.
\end{equation}
It's easy to check that
\begin{equation}\label{eq:condition of monodromy matrix of brake orbit}
(\mathcal{N}\gamma(T))^2=I_{2n}, \quad \gamma(T)=\mathcal{N}\gamma(\frac{T}{2})^{-1}\mathcal{N}\gamma(\frac{T}{2}).
\end{equation}

Before to give another corollary of Theorem \ref{thm:difference of two morse index} and the application to brake orbits, we need an important but simple lemma.
\begin{lem}\label{lem:principle of istropic subspace}
For a given linear subspace $\mathcal{W}\subset\mathbf{R}^n$, let $\mathfrak{Q}$ be a quadratic form defined on $\mathcal{W}$ and $\mathcal{X}\subset\mathcal{W}$ be any subspace such that
$\mathfrak{Q}(x,y)=0$ for every $x,y\in\mathcal{X}$.  Then we have
\begin{equation}\label{eq:dimension of Q-istropic}
\dim\mathcal{X}\leq m^+(\mathfrak{Q})+\dim\ker \mathfrak{Q}.
\end{equation}
Particularly, if we have the splitting $\ker \mathfrak{Q}=\ker_1\mathfrak{Q}\oplus\ker_2\mathfrak{Q}$ and $\mathcal{X}\cap\ker_1\mathfrak{Q}=\{0\}$, then for $\mathcal{X}$ as above there holds
\begin{equation}\label{eq:dimension of Q-istropic with splitting}
\dim\mathcal{X}\leq m^+(\mathfrak{Q})+\dim\ker_2\mathfrak{Q}.
\end{equation}
\end{lem}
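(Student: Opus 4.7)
The plan is to reduce both inequalities to a single rank-nullity argument applied to the projection of $\mathcal{X}$ onto a maximal positive-definite subspace of $\mathfrak{Q}$.

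First, fix a Sylvester-type decomposition $\mathcal{W}=\mathcal{W}^{+}\oplus\mathcal{W}^{-}\oplus\ker\mathfrak{Q}$, where $\mathcal{W}^{+}$ (resp.\ $\mathcal{W}^{-}$) is a maximal positive- (resp.\ negative-) definite subspace of $\mathfrak{Q}$, so that $\dim\mathcal{W}^{+}=m^{+}(\mathfrak{Q})$, $\dim\mathcal{W}^{-}=m^{-}(\mathfrak{Q})$, and the three summands are mutually $\mathfrak{Q}$-orthogonal. Let $P\colon\mathcal{X}\to\mathcal{W}^{+}$ denote the projection along $\mathcal{W}^{-}\oplus\ker\mathfrak{Q}$. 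The key step is to show $\ker P\subset\ker\mathfrak{Q}$. Indeed, any $x\in\ker P$ splits as $x=x_{-}+x_{0}$ with $x_{-}\in\mathcal{W}^{-}$ and $x_{0}\in\ker\mathfrak{Q}$; since $\mathcal{X}$ is $\mathfrak{Q}$-isotropic we have $0=\mathfrak{Q}(x,x)=\mathfrak{Q}(x_{-},x_{-})\le 0$, with equality forcing $x_{-}=0$ by negative-definiteness on $\mathcal{W}^{-}$. Hence $x=x_{0}\in\ker\mathfrak{Q}$, i.e.\ $\ker P=\mathcal{X}\cap\ker\mathfrak{Q}$.

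For the first inequality \eqref{eq:dimension of Q-istropic}, rank-nullity gives
\begin{equation}
\dim\mathcal{X}=\dim\ker P+\dim\operatorname{Im}P\leq\dim(\mathcal{X}\cap\ker\mathfrak{Q})+\dim\mathcal{W}^{+}\leq\dim\ker\mathfrak{Q}+m^{+}(\mathfrak{Q}),
\end{equation}
which is exactly the desired bound.

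For the refined inequality \eqref{eq:dimension of Q-istropic with splitting}, we only need to replace the crude estimate $\dim(\mathcal{X}\cap\ker\mathfrak{Q})\le\dim\ker\mathfrak{Q}$ by one that uses the hypothesis $\mathcal{X}\cap\ker_{1}\mathfrak{Q}=\{0\}$. Decompose $\ker\mathfrak{Q}=\ker_{1}\mathfrak{Q}\oplus\ker_{2}\mathfrak{Q}$ and consider the projection $\pi\colon\mathcal{X}\cap\ker\mathfrak{Q}\to\ker_{2}\mathfrak{Q}$ along $\ker_{1}\mathfrak{Q}$. Its kernel is $\mathcal{X}\cap\ker_{1}\mathfrak{Q}=\{0\}$, so $\pi$ is injective and $\dim(\mathcal{X}\cap\ker\mathfrak{Q})\le\dim\ker_{2}\mathfrak{Q}$. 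Substituting into the rank-nullity inequality above yields
\begin{equation}
\dim\mathcal{X}\le\dim\ker_{2}\mathfrak{Q}+m^{+}(\mathfrak{Q}),
\end{equation}
which is \eqref{eq:dimension of Q-istropic with splitting}.

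I do not expect any real obstacle here: the content of the lemma is elementary linear algebra, and the only subtlety is choosing to project onto $\mathcal{W}^{+}$ (so that $m^{+}$ appears on the right-hand side) rather than onto $\mathcal{W}^{-}$. The isotropy condition, combined with the sign of $\mathfrak{Q}$ on $\mathcal{W}^{-}$, is what pins the kernel of the projection inside $\ker\mathfrak{Q}$, and the supplementary transversality $\mathcal{X}\cap\ker_{1}\mathfrak{Q}=\{0\}$ allows one to sharpen the final term from $\dim\ker\mathfrak{Q}$ to $\dim\ker_{2}\mathfrak{Q}$.
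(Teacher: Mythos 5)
Your proof is correct and follows essentially the same route as the paper's: both fix the Sylvester-type decomposition $\mathcal{W}=\mathcal{W}^{+}\oplus\ker\mathfrak{Q}\oplus\mathcal{W}^{-}$ and use that isotropy of $\mathcal{X}$, against negative definiteness on $\mathcal{W}^{-}$, kills any $\mathcal{W}^{-}$-component. The only difference is presentational: the paper argues by contradiction (if the bound failed, $\mathcal{X}$ would meet $\mathcal{W}^{-}$, respectively $\ker_{1}\mathfrak{Q}\oplus\mathcal{W}^{-}$, nontrivially), whereas you obtain the same dimension count directly via rank--nullity for the projection onto $\mathcal{W}^{+}$ and the injectivity of the projection of $\mathcal{X}\cap\ker\mathfrak{Q}$ onto $\ker_{2}\mathfrak{Q}$.
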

\begin{proof}
Associated to $\mathfrak{Q}$, there exists a symmetric matrix $\mathcal{A}$ such that $\mathfrak{Q}(x,y)=\langle\mathcal{A}x,y\rangle$, then we have the splitting
$$\mathcal{W}=\mathcal{W}^+\oplus\mathcal{W}^0\oplus\mathcal{W}^-,$$
where $\mathcal{W}^*, *=+,0,-$ denote the corresponding positive, zero and negative eigenspaces of $\mathcal{A}$. If $\dim\mathcal{X}> m^+(\mathfrak{Q})+\dim\ker \mathfrak{Q}$, then $\mathcal{X}\cap\mathcal{W}^-\neq\{0\}$ which is a contradiction to $\mathfrak{Q}(x,y)=0$ for every $x,y\in\mathcal{X}$.

For the second statement, if \eqref{eq:dimension of Q-istropic with splitting} is false, then $\mathcal{X}\cap(\ker_1\mathfrak{Q}\oplus\mathcal{W}^-)\neq\{0\}$. There must exist a nonzero $x=x^1_0+x^-\in \mathcal{X}$ with $x^1_0\in\ker_1\mathfrak{Q}, x^-\in\mathcal{W}$ such that $\mathfrak{Q}(x,x)=\mathfrak{Q}(x^1_0,x^1_0)+\mathfrak{Q}(x^-,x^-)=0$. It derives $\mathfrak{Q}(x^-,x^-)=0$ and consequently $x^-=0$. Hence $x=x^1_0\in\ker_1\mathfrak{Q}\cap\mathcal{X}$ which is to contradiction to $\mathcal{X}\cap\ker_1\mathfrak{Q}=\{0\}$. We complete the proof.
\end{proof}

Now we can prove another corollary of Theorem \ref{thm:difference of two morse index} as following:
\begin{cor}\label{cor:dimension of an eigenvalue}
Let $\gamma(T)$ be the monodromy matrix of system \eqref{eq:Hamilton system} and $\mathbf{U}$ be the unit circle of complex plane, then
\begin{equation}
\dim\ker(\gamma(T)-\lambda I_{2n})\leq m^-(\Lambda_N)+m^0(\Lambda_N)-m^-(\Lambda_D).
\end{equation}
\end{cor}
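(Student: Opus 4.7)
The plan is to apply Theorem~\ref{thm:difference of two morse index} with $\Lambda_0 = \Lambda_N$ to rewrite
$m^-(\Lambda_N)-m^-(\Lambda_D) = i(Gr(\gamma(T)),\Lambda_N,\Lambda_D)$,
and then to repackage the target bound using the two identities already available for the triple index. From \eqref{trip1} and \eqref{Qker}, the sum $m^+(\mathfrak{Q})+\dim\ker\mathfrak{Q}$ for $\mathfrak{Q}:=\mathfrak{Q}(Gr(\gamma(T)),\Lambda_N;\Lambda_D)$ telescopes to $i(Gr(\gamma(T)),\Lambda_N,\Lambda_D)+\dim(Gr(\gamma(T))\cap\Lambda_N)$, because the overlap $\dim(Gr(\gamma(T))\cap\Lambda_D)$ and the triple-intersection term cancel. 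I will then observe that $\dim(Gr(\gamma(T))\cap\Lambda_N)=m^0(\Lambda_N)$, since a solution of the Hamilton system satisfies the Neumann boundary exactly when its momentum part vanishes at $0$ and $T$, which is precisely the condition $(z(0),\gamma(T)z(0))\in\Lambda_N$. This reduces the stated inequality to the abstract estimate $\dim\ker(\gamma(T)-\lambda I_{2n})\le m^+(\mathfrak{Q})+\dim\ker\mathfrak{Q}$.

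To obtain the abstract estimate I plan to embed $V_\lambda := \ker(\gamma(T)-\lambda I_{2n})$ into $Gr(\gamma(T))$ via $z_0\mapsto(z_0,\lambda z_0)$ and show this image is an isotropic subspace of $\mathfrak{Q}$ when $|\lambda|=1$; Lemma~\ref{lem:principle of istropic subspace} then finishes the proof. The crux is an explicit pairing computation. Writing $z_0=(p_0,q_0)^T$ and decomposing $(z_0,\lambda z_0)$ into its Neumann piece $y=((0,q_0),(0,\lambda q_0))\in\Lambda_N$ and Dirichlet piece $z=((p_0,0),(\lambda p_0,0))\in\Lambda_D$, and unwinding the definition \eqref{eq:the form Q} of $\mathfrak{Q}$ against the symplectic form $-\omega\oplus\omega$, I expect to arrive at
$\mathfrak{Q}((z_0,\lambda z_0),(z'_0,\lambda z'_0))=(1-|\lambda|^2)\langle q_0,p'_0\rangle$,
which vanishes identically once $\lambda$ lies on the unit circle. (As a sanity check, the symplectic invariance $\omega(z_0,z'_0)=\omega(\gamma(T)z_0,\gamma(T)z'_0)$ guarantees that this pairing is Hermitian on all of $Gr(\gamma(T))$, not only on $V_\lambda$.)

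The main obstacle I expect is keeping the bookkeeping honest in the pairing computation: getting the sign from $-\omega\oplus\omega$ right, tracking which of $p,q$ are truncated when projecting onto $\Lambda_N$ versus $\Lambda_D$, and verifying that these pieces really lie in the respective Lagrangian subspaces. Once the identity $\mathfrak{Q}((z_0,\lambda z_0),(z'_0,\lambda z'_0))=(1-|\lambda|^2)\langle q_0,p'_0\rangle$ is in hand, the isotropy on $\mathbf{U}$ is automatic, and assembling the ingredients---Theorem~\ref{thm:difference of two morse index}, the triple-index identities \eqref{trip1}--\eqref{Qker}, and Lemma~\ref{lem:principle of istropic subspace}---yields the claimed bound $\dim\ker(\gamma(T)-\lambda I_{2n})\le m^-(\Lambda_N)+m^0(\Lambda_N)-m^-(\Lambda_D)$.
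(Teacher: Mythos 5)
Your proposal is correct and follows essentially the same route as the paper: embed $\ker(\gamma(T)-\lambda I_{2n})$ into $Gr(\gamma(T))$, check that $\mathfrak{Q}(Gr(\gamma(T)),\Lambda_N;\Lambda_D)$ vanishes there via the $(1-|\lambda|^2)$ factor when $|\lambda|=1$, apply Lemma \ref{lem:principle of istropic subspace}, and convert $m^+(\mathfrak{Q})+\dim\ker\mathfrak{Q}$ into $i(Gr(\gamma(T)),\Lambda_N,\Lambda_D)+\dim(Gr(\gamma(T))\cap\Lambda_N)=m^-(\Lambda_N)-m^-(\Lambda_D)+m^0(\Lambda_N)$ using \eqref{trip1}, \eqref{Qker} and Theorem \ref{thm:difference of two morse index}. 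The only cosmetic difference is that you compute the full sesquilinear pairing on the eigenspace while the paper only evaluates the quadratic form on the diagonal, which is equivalent.
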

\begin{proof}
Let $\gamma(T)=\begin{bmatrix}D_1&D_2\\D_3&D_4\end{bmatrix}$, please recall that the frame of $Gr(\gamma)$ under the symplectic form $J_{2n}$ is given by $\begin{bmatrix} -I&0\\D_1&D_2\\0&I\\D_3&D_4 \end{bmatrix}$.  For every $\lambda\in\sigma(\gamma(T))$ with corresponding eigenvector $u_{\lambda}=(x,y)^T$, namely, $\gamma(T)u_{\lambda}=\lambda u_{\lambda}$,
 let  $$z_\lambda=(-x, D_1x+D_2y, y,D_3x+D_4y)^T\in Gr(\gamma(T))\cap(\Lambda_N+\Lambda_D).$$ By the definition of $\mathfrak{Q}(Gr(\gamma(T)),\Lambda_N;\Lambda_D)$ and direct computations we have
\begin{equation}\label{eq:form of Q on graph of monodromy matrix}
\mathfrak{Q}(z_\lambda,z_\lambda)=\langle x,y\rangle-\langle D_1x+D_2y,D_3x+D_4y\rangle.
\end{equation}
For every $\lambda\in\sigma(\gamma(T))\cap\mathbf{U}$ and every $(x,y)^T\in\ker(\gamma(T)-\lambda I_{2n})$ which derives $ D_1x+D_2y=\lambda x, D_3x+D_4y=\lambda y$, then by \eqref{eq:form of Q on graph of monodromy matrix} we have $\mathfrak{Q}(z_\lambda,z_\lambda)=0$.   By Lemma \ref{lem:principle of istropic subspace} we get
 $$\dim\ker(\gamma(T)-\lambda I_{2n})\leq m^+(\mathfrak{Q})+\dim\ker \mathfrak{Q}.$$
  But note that $$\ker \mathfrak{Q}=(Gr(\gamma(T))\cap\Lambda_D)\oplus(Gr(\gamma(T))\cap\Lambda_N),$$
  then \bea \dim\ker(\gamma(T)-\lambda I_{2n})\leq i(Gr(\gamma(T)), \Lambda_N, \Lambda_D)+\dim Gr(\gamma(T))\cap \Lambda_N. \label{4.10}  \eea
 From the fact that $m^0(\Lambda_N)=\dim Gr(\gamma(T))\cap \Lambda_N$ and combine with Theorem \ref{thm:difference of two morse index} and \eqref{4.10}, we complete the proof.
  \end{proof}

Now we will go on the application to a brake orbit. We still denote $\gamma(T)=\begin{bmatrix}D_1&D_2\\D_3&D_4\end{bmatrix}$ and $\gamma(T/2)=\begin{bmatrix}E_1&E_2\\E_3&E_4\end{bmatrix}$. Since $\gamma(T/2)\in\Sp(2n)$, then
$$ E_1^TE_3=E^T_3E_1,\quad   E^T_2E_4=E_4^TE_2,\quad  E^T_4E_1-E^T_2E_3=I_n. $$
From  $\gamma(T)=\mathcal{N}\gamma^{-1}(T/2)\mathcal{N}\gamma(T/2)\in\Sp(2n)$, direct computations show that
\bea  D_1=E_4^TE_1+E^T_2E_3, \quad D_2=E_4^TE_2+E^T_2E_4,\quad D_3=E_3^TE_1+E^T_1E_3,\quad D_4=E_3^TE_2+E^T_1E_4, \label{M2decom}  \eea
and we have
\bea D_4=D^T_1,\quad D_2=D_2^T, \quad  D_3=D_3^T,\quad  D^T_1D_3=D_3D_1, \quad D_2D^T_1=D_1D_2,  \quad D^2_1-D_2D_3=I_n. \label{eq:property of gamma(T)} \eea

We start from a simple lemma.
\begin{lem}\label{cor:fundamentle condition of Q-orthogonal}
For $\lambda_1, \lambda_2\in\sigma(\gamma(T))$ with corresponding eigenvectors $u_1=(x_1,y_1)^T, \ u_2=(x_2,y_2)^T$, then there must hold at least one of the three statements: $(1)\ \lambda_1=\bar{\lambda}_2, \quad (2)\ \lambda_1\bar{\lambda}_2=1 \quad (3)\ \langle x_1,y_2\rangle=0$.
\end{lem}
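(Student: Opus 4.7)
The plan is to exploit two structural facts about the monodromy matrix of a brake orbit: that $\gamma(T)$ is real symplectic, and that the brake symmetry gives $\gamma(T)\mathcal{N}=\mathcal{N}\gamma(T)^{-1}$ (which follows immediately from $(\mathcal{N}\gamma(T))^2=I_{2n}$ in \eqref{eq:condition of monodromy matrix of brake orbit}). From these I will derive two orthogonality relations for $\omega(\cdot,\cdot)$, and then translate them into component form to isolate $\langle x_1,y_2\rangle$.

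First, since $\gamma(T)\in\Sp(2n,\mathbf{R})$, the symplectic invariance $\omega(\gamma(T)v,\gamma(T)w)=\omega(v,w)$ applied to $v=u_1$, $w=u_2$ (both eigenvectors) yields $\omega(u_1,u_2)=\bar\lambda_1\lambda_2\,\omega(u_1,u_2)$, using that $\omega(x,y)=\langle Jx,y\rangle$ is conjugate-linear in the first slot. Hence if $\lambda_1\bar\lambda_2\neq 1$ (equivalently $\bar\lambda_1\lambda_2\neq 1$), then $\omega(u_1,u_2)=0$. Next, the brake symmetry implies $\gamma(T)(\mathcal{N}u_2)=\mathcal{N}\gamma(T)^{-1}u_2=(1/\lambda_2)\mathcal{N}u_2$, so $\mathcal{N}u_2$ is an eigenvector of $\gamma(T)$ with eigenvalue $1/\lambda_2$. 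Applying the same invariance identity to the pair $(u_1,\mathcal{N}u_2)$ gives $\omega(u_1,\mathcal{N}u_2)=(\bar\lambda_1/\lambda_2)\,\omega(u_1,\mathcal{N}u_2)$, so if $\lambda_1\neq\bar\lambda_2$ (equivalently $\bar\lambda_1\neq\lambda_2$), then $\omega(u_1,\mathcal{N}u_2)=0$.

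The final step is a direct computation in components. Writing $u_j=(x_j,y_j)^T$ and using $Ju_1=(-y_1,x_1)^T$ together with the standard Hermitian inner product, one finds
\begin{equation}
\omega(u_1,u_2)=-\langle y_1,x_2\rangle+\langle x_1,y_2\rangle,\qquad \omega(u_1,\mathcal{N}u_2)=\langle y_1,x_2\rangle+\langle x_1,y_2\rangle.
\end{equation}
Adding these two identities gives $\omega(u_1,u_2)+\omega(u_1,\mathcal{N}u_2)=2\langle x_1,y_2\rangle$. Therefore, if neither (1) nor (2) holds, both $\omega(u_1,u_2)$ and $\omega(u_1,\mathcal{N}u_2)$ vanish, forcing $\langle x_1,y_2\rangle=0$, which is (3).

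The argument is essentially a linear-algebraic calculation, so there is no single hard step; the only mild subtlety is keeping track of the conjugate-linearity of the Hermitian form $\langle\cdot,\cdot\rangle$ in the first argument (so that eigenvalues produce $\bar\lambda_1$ rather than $\lambda_1$ on one side), and verifying that the two component formulas for $\omega(u_1,u_2)$ and $\omega(u_1,\mathcal{N}u_2)$ combine cleanly to eliminate the $\langle y_1,x_2\rangle$ term. Once that bookkeeping is set up correctly, the trichotomy $(1)$–$(2)$–$(3)$ falls out immediately.
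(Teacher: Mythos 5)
Your proof is correct, but it takes a genuinely different route from the paper. The paper's argument is block-matrix computational: from $\gamma(T)\mathcal{N}u_\lambda=\lambda^{-1}\mathcal{N}u_\lambda$ it extracts the componentwise relations \eqref{eq:conditions of eigenvector of gamma(T)} (in particular $D_1x=\frac{\lambda+\lambda^{-1}}{2}x$ and $D_4y=\frac{\lambda+\lambda^{-1}}{2}y$), and then uses the brake-orbit identity $D_4=D_1^T$ coming from \eqref{M2decom}--\eqref{eq:property of gamma(T)} to move $D_1$ across the Hermitian pairing, yielding $\bigl(\frac{\lambda_1+\lambda_1^{-1}}{2}-\frac{\bar{\lambda}_2+\bar{\lambda}_2^{-1}}{2}\bigr)\langle x_1,y_2\rangle=0$; the trichotomy then comes from the map $\lambda\mapsto\frac{\lambda+\lambda^{-1}}{2}$ being two-to-one. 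You instead never touch the blocks $D_i$: you apply invariance of $\omega$ under the real symplectic matrix $\gamma(T)$ twice, once to $(u_1,u_2)$ (killing $\omega(u_1,u_2)$ unless $\lambda_1\bar{\lambda}_2=1$) and once to $(u_1,\mathcal{N}u_2)$ using the reversibility $\gamma(T)\mathcal{N}=\mathcal{N}\gamma(T)^{-1}$ (killing $\omega(u_1,\mathcal{N}u_2)$ unless $\lambda_1=\bar{\lambda}_2$), and then the componentwise sum $\omega(u_1,u_2)+\omega(u_1,\mathcal{N}u_2)=2\langle x_1,y_2\rangle$ finishes it. Your version is cleaner and needs only $\gamma(T)^TJ\gamma(T)=J$ and $(\mathcal{N}\gamma(T))^2=I_{2n}$, whereas the paper's more explicit computation has the side benefit of producing \eqref{eq:conditions of eigenvector of gamma(T)} and the relations \eqref{eq:property of gamma(T)}, which are reused later (in the proof of Theorem \ref{thm:stability of brake orbit1} and of Ure\~{n}a's theorem). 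The only point to keep straight is the conjugation convention in the Hermitian product: with the opposite convention the two dichotomies simply swap roles between your two invariance computations, so the conclusion is unaffected, as you noted.
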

\begin{proof}
For every $\lambda\in\sigma(\gamma(T))$ with corresponding eigenvector $u_{\lambda}=(x,y)^T$, namely, $\gamma(T)u_{\lambda}=\lambda u_{\lambda}$, by \eqref{eq:condition of monodromy matrix of brake orbit} we have $\gamma(T)\mathcal{N}u_{\lambda}=\mathcal{N}\gamma(T)^{-1}u_{\lambda}=\lambda^{-1}\mathcal{N}u_{\lambda}$, then $\mathcal{N}u_{\lambda}=(-x,y)^T$ is an eigenvector related to $\lambda^{-1}\in\sigma(\gamma(T))$. By the expression of $\gamma(T)$ and some direct calculations, there hold
\begin{equation}\label{eq:conditions of eigenvector of gamma(T)}
D_1x=\frac{\lambda+\lambda^{-1}}{2}x, \ D_2y=\frac{\lambda-\lambda^{-1}}{2}y, \ D_3x=\frac{\lambda-\lambda^{-1}}{2}y, \ D_4y=\frac{\lambda+\lambda^{-1}}{2}y.
\end{equation}
For $\lambda_1, \lambda_2\in\sigma(\gamma(T))$ with corresponding eigenvectors $u_1=(x_1,y_1)^T, u_2=(x_2,y_2)^T$, then by \eqref{eq:property of gamma(T)} and \eqref{eq:conditions of eigenvector of gamma(T)} we have
$$\langle \frac{\lambda_1+\lambda_1^{-1}}{2}x_1,y_2 \rangle=\langle D_1x_1,y_2\rangle=\langle x_1,D_1^*y_2 \rangle=\langle x_1,D_4y_2\rangle=
\langle x_1, \frac{\lambda_2+\lambda_2^{-1}}{2}y_2\rangle$$
which derives
\begin{equation}\label{eq:relation about two eigenspace}
(\frac{\lambda_1+\lambda_1^{-1}}{2}-\frac{\bar{\lambda}_2+\bar{\lambda}_2^{-1}}{2})\langle x_1,y_2 \rangle=0.
\end{equation}
In fact, $\frac{\lambda_1+\lambda_1^{-1}}{2}-\frac{\bar{\lambda}_2+\bar{\lambda}_2^{-1}}{2}=0$ is equivalent to $\lambda_1=\bar{\lambda}_2$ or $\lambda_1\bar{\lambda}_2=1$. By \eqref{eq:relation about two eigenspace} we complete the proof.
\end{proof}

\begin{cor}\label{cor4.4}
For $i=1,2$,   $\lambda_i\in\sigma(\gamma(T))$ with eigenvector $u_i$,  let  $z_i=(-x_i,D_1x_i+D_2y_i,y_i,D_3x_i+D_4y_i)^T$,  then as long as $\lambda_1\neq\bar{\lambda}_2$ or $\lambda_1=\lambda_2=\pm1$,  we have $z_1$ and $z_2$ are $\mathfrak{Q}$-orthogonal.
\end{cor}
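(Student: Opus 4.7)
My approach is to compute $\mathfrak{Q}(z_1,z_2)$ explicitly so that it factors as a scalar times a Hermitian pairing, and then read off vanishing in each case of the hypothesis from the alternatives provided by Lemma~\ref{cor:fundamentle condition of Q-orthogonal}. Since $\gamma(T)u_i=\lambda_i u_i$, the block identities $D_1x_i+D_2y_i=\lambda_i x_i$ and $D_3x_i+D_4y_i=\lambda_i y_i$ hold, so the frame element takes the simplified form $z_i=(-x_i,\lambda_i x_i,y_i,\lambda_i y_i)^T$. Using $\Lambda_N\cap\Lambda_D=\{0\}$, the decomposition $z_i=n_i+d_i$ with $n_i\in\Lambda_N$ and $d_i\in\Lambda_D$ is uniquely given by $n_i=(0,0,y_i,\lambda_i y_i)^T$ and $d_i=(-x_i,\lambda_i x_i,0,0)^T$, so $\mathfrak{Q}(z_1,z_2)$ is unambiguously computable from the defining recipe $\mathfrak{Q}(z_1,z_2)=\omega(n_1,d_2)$.

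Polarizing the diagonal identity \eqref{eq:form of Q on graph of monodromy matrix} already used in the proof of Corollary~\ref{cor:dimension of an eigenvalue}, I expect to obtain
\begin{equation*}
\mathfrak{Q}(z_1,z_2)=\langle x_1,y_2\rangle-\langle D_1x_1+D_2y_1,\,D_3x_2+D_4y_2\rangle=(1-\lambda_1\bar{\lambda}_2)\langle x_1,y_2\rangle,
\end{equation*}
where the second equality substitutes the eigenvector relations and uses sesquilinearity of the Hermitian pairing. With this product formula in hand, the corollary follows by a two-case analysis matching the disjunction in its hypothesis. If $\lambda_1=\lambda_2=\pm 1$, then $\lambda_1\bar{\lambda}_2=1$ and the scalar factor vanishes automatically. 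If instead $\lambda_1\ne\bar{\lambda}_2$, then Lemma~\ref{cor:fundamentle condition of Q-orthogonal} forces one of the remaining two alternatives: either $\lambda_1\bar{\lambda}_2=1$, in which case again the scalar factor is zero, or $\langle x_1,y_2\rangle=0$, in which case the pairing is zero. In every subcase $\mathfrak{Q}(z_1,z_2)=0$.

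The only step requiring real care is the polarization: one has to verify that the off-diagonal value of the Hermitian form $\mathfrak{Q}$ is obtained from \eqref{eq:form of Q on graph of monodromy matrix} by placing $(x_1,y_1)$ and $(x_2,y_2)$ in the correct slots, with no stray complex conjugations. Once this bookkeeping is pinned down, the rest of the argument is the mechanical case check above.
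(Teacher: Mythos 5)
Your proposal is correct and follows essentially the same route as the paper: the paper also computes the off-diagonal value $\mathfrak{Q}(z_1,z_2)=(1-\lambda_1\bar{\lambda}_2)\langle x_1,y_2\rangle$ (its formula \eqref{eq:concrete formula of form Q}) directly from \eqref{eq:form of Q on graph of monodromy matrix} and the eigenvector relations, and then concludes by the same case split, invoking Lemma \ref{cor:fundamentle condition of Q-orthogonal} exactly when $\lambda_1\bar{\lambda}_2\neq 1$ and $\lambda_1\neq\bar{\lambda}_2$. Your explicit $\Lambda_N$/$\Lambda_D$ decomposition of $z_i$ is just the ``direct computation'' the paper leaves implicit, so no substantive difference remains.
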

\begin{proof}
Consider the form $\mathfrak{Q}(Gr(\gamma(T)), \Lambda_N; \Lambda_D)$ defined on $Gr(\gamma(T))\cap(\Lambda_N+\Lambda_D)$ and let $\lambda_i, u_i, i=1,2 $ be as above, then by \eqref{eq:form of Q on graph of monodromy matrix} and direct computations we have
\begin{equation}\label{eq:concrete formula of form Q}
\mathfrak{Q}(z_1,z_2)=(1-\lambda_1\bar{\lambda}_2)\langle x_1,y_2\rangle.
\end{equation}
Obviously, if $1=\lambda_1\bar{\lambda}_2$, then $z_1$ and $z_2$ are $\mathfrak{Q}$-orthogonal. If $1\neq\lambda_1\bar{\lambda}_2$ and $\lambda_1\neq\bar{\lambda}_2$, by Lemma \ref{cor:fundamentle condition of Q-orthogonal}, we still have $z_1$ and $z_2$ are $\mathfrak{Q}$-orthogonal.
\end{proof}

Now we give the proof of Theorem \ref{thm:stability of brake orbit1}.
\begin{proof}[Proof of Theorem \ref{thm:stability of brake orbit1}]

For every $\lambda\in\sigma(\gamma(T))$ with corresponding eigenvector $u_{\lambda}=(x,y)^T$, we denote $z=(-x,D_1x+D_2y, y, D_3x+D_4y)^T$. In fact, it's apparent that $\lambda_1\neq\bar{\lambda}_2$ for any $\lambda_1, \lambda_2\in \sigma(\gamma(T))\cap\mathbf{C}^\perp\cup\{\pm1\}$, then by Corollary  \ref{cor4.4},
 we have $z_1, z_2$ are $\mathfrak{Q}$-orthogonal.  We get  \eqref{eq:estimate of eigenvalues not real1}   from Lemma \ref{lem:principle of istropic subspace}

 To prove \eqref{eq:estimate of eigenvalues not real},
 recall that $\ker \mathfrak{Q}(Gr(\gamma(T)), \Lambda_N; \Lambda_D)=\ker_1\mathfrak{Q}\oplus\ker_2\mathfrak{Q}$ where $\ker_1\mathfrak{Q}=Gr(\gamma(T))\cap\Lambda_N$ and $\ker_2\mathfrak{Q}=Gr(\gamma(T))\cap\Lambda_D$. For every $\lambda\in\sigma(\gamma(T))\cap\mathbf{C}^\perp$, if $z=(-x,D_1x+D_2y, y, D_3x+D_4y)^T\in\Lambda_N$, then $x=0$. Together with \eqref{eq:conditions of eigenvector of gamma(T)} which derives $D_3x=\frac{\lambda-\lambda^{-1}}{2}y=0$, then $y$=0 since $\frac{\lambda-\lambda^{-1}}{2}\neq0$. So $z\notin\Lambda_N$. Note that for $\lambda_i, \lambda_j\in \sigma(\gamma(T))\cap\mathbf{C}^\perp$, there must hold $\frac{\lambda_i+\lambda_i^{-1}}{2}\neq\frac{\lambda_j+\lambda_j^{-1}}{2}$ if $\lambda_i\neq\lambda_j$. By \eqref{eq:conditions of eigenvector of gamma(T)} we know $D_1x_i=\frac{\lambda_i+\lambda_i^{-1}}{2}x_i$ which deduces that all $x_i$ are linear independent for different $\lambda_i$. If there is a linear combination $\sum_ia_iz_i\in\Lambda_N$, then we have $\sum_ia_ix_i=0$ and consequently every $a_i=0$. Then every $z=\sum_ia_iz_i$ is not in $\Lambda_N$ which means $\ker \mathfrak{Q}\cap Gr(\gamma(T))\cap\Lambda_N=\{0\}$.  Let $\mathcal{X}$ be the space of all $z=(-x,D_1x+D_2y, y, D_3x+D_4y)^T$ such that $(x,y)^T$ is an eigenvector of some $\lambda\in\sigma(\gamma(T))\cap\mathbf{C}^\perp$. Then $\mathfrak{Q}|_{\mathcal{X}}=0$ and $\mathcal{X}\cap \ker_1\mathfrak{Q}=\{0\}$. By \eqref{eq:dimension of Q-istropic with splitting} there hold
\begin{equation}\label{eq:dimension less than difference of morse index}
\dim\bigoplus_{\lambda\in\sigma(\gamma(T))\cap\mathbf{C}^\perp}\ker(\gamma(T)-\lambda I_{2n})\leq m^+(\mathfrak{Q})+\dim(Gr(\gamma(T))\cap\Lambda_D).
\end{equation}
Recall the formula \eqref{eq:compute the triple index} and Theorem \ref{thm:difference of two morse index}, the righthand side of \eqref{eq:dimension less than difference of morse index} is exactly $m^-(\Lambda_N)-m^-(\Lambda_D)$. We complete the proof.
\end{proof}
Now we can present the proof of Theorem \ref{thm:stability of brake orbit}.
\begin{proof}[Proof of Theorem \ref{thm:stability of brake orbit}]

Let $m^-([a,b];\Lambda_0)$ be the Morse index on time interval $[a,b]$ for the boundary condition $\Lambda_0$. Denote $V_{\pm}(\mathcal{N})$ the eigenspace corresponding to the eigenvalue $\pm1$ of $\mathcal{N}$. By simple computations we have $V_+(\mathcal{N})=\Lambda^n_N$ and $V_-(\mathcal{N})=\Lambda^n_D$. Then by \cite[Theorem 2]{HPY} we have
\begin{equation}\label{eq:morse index decomposition}
\begin{aligned}
k=m^-(x,\Lambda_P)&=m^-([0,T/2];\Lambda_N)+m^-([0,T/2];\Lambda_D)\\
m^-([0,T];\Lambda_N)&=m^-([0,T/2];\Lambda_N)+m^-([0,T/2];\Lambda^n_N\oplus\Lambda^n_D)\\
m^-([0,T];\Lambda_D)&=m^-([0,T/2];\Lambda_D)+m^-([0,T/2];\Lambda^n_D\oplus\Lambda^n_N)
\end{aligned}
\end{equation}
which obviously derives
\begin{equation}
m^-(\Lambda_N)-m^-(\Lambda_D)\leq2k.
\end{equation}
By \eqref{eq:estimate of eigenvalues not real} we complete the proof.

\end{proof}

As a corollary, we give a new proof for  Ure\~{n}a's \cite[Theorem 1.1]{Ur}  interesting result.
\begin{thm}
For a minimizer brake orbit all eigenvalues of monodromy matrix $\gamma(T)$ are real and positive.\end{thm}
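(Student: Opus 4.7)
Since $k:=m^-(x,\Lambda_P)=0$, Theorem \ref{thm:stability of brake orbit} immediately gives $\dim\bigoplus_{\lambda\in\sigma(\gamma(T))\cap\mathbf{C}^\perp}\ker(\gamma(T)-\lambda I_{2n})\le 2k=0$, so every eigenvalue of $\gamma(T)$ is real; as $\gamma(T)\in \Sp(2n)$ is invertible, none is zero, and the only thing left to prove is that no eigenvalue is strictly negative. The engine for this is the half-period Morse-index information. The three decomposition identities \eqref{eq:morse index decomposition} give $m^-([0,T/2];\Lambda_N)=m^-([0,T/2];\Lambda_D)=0$, and combining monotonicity (Corollary \ref{cor1.1}) with the same decomposition on the full interval yields $m^-([0,T/2];\Lambda_N^n\oplus\Lambda_D^n)=m^-([0,T/2];\Lambda_D^n\oplus\Lambda_N^n)=0$. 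Applying Theorem \ref{thm: morse index thm gene} to these four half-period problems, the vanishing of the conjugate-point sums forces the blocks $E_1,E_2,E_3,E_4$ of $\gamma(T/2)$ to satisfy invertibility of $E_3(t)$ and $E_4(t)$ on $(0,T/2]$, while the vanishing of the endpoint triple indices $i(\gamma(T/2)\Lambda_N^n,\Lambda_N^n,\Lambda_D^n)$ and $i(\gamma(T/2)\Lambda_D^n,\Lambda_N^n,\Lambda_D^n)$, unpacked via the quadratic form $\mathfrak{Q}$ exactly as in the proof of Theorem \ref{thm:stability of brake orbit1}, yields the positive semi-definiteness $E_2^T E_4\ge 0$ and $E_1^T E_3\ge 0$ at $t=T/2$.

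With these ingredients I would record $\gamma(T/2)\Lambda_N^n$ and $\gamma(T/2)\Lambda_D^n$ as graphs over $\Lambda_N^n$ by introducing
\begin{equation*}
S_N := E_2 E_4^{-1},\qquad S_D := E_1 E_3^{-1},
\end{equation*}
both symmetric (by the symplectic relations $E_2^T E_4=E_4^T E_2$ and $E_1^T E_3=E_3^T E_1$) and positive semi-definite (by the inequalities just extracted). A short manipulation based on the symplectic identity $E_1 E_4^T-E_2 E_3^T=I$ produces the key identity
\begin{equation*}
S_D - S_N = (E_4 E_3^T)^{-1},
\end{equation*}
and since $E_4 E_3^T$ is symmetric, nonsingular on $(0,T/2]$, and strictly positive for $t\to 0^+$ (because $E_3(t)\sim tP(0)^{-1}$ with $P(0)>0$ while $E_4(0)=I$), continuity gives $E_4 E_3^T>0$ throughout $(0,T/2]$; hence $S_D-S_N$ is strictly positive definite at $t=T/2$.

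The final step uses the brake-symmetry consequence $\gamma(T)^{-1}=\mathcal{N}\gamma(T)\mathcal{N}$, which is immediate from $\gamma(T)=\mathcal{N}\gamma(T/2)^{-1}\mathcal{N}\gamma(T/2)$. Expanding it in blocks produces the identity
\begin{equation*}
\gamma(T)+\gamma(T)^{-1}\;=\;2\begin{pmatrix}D_1 & 0\\ 0 & D_1^T\end{pmatrix},
\end{equation*}
so the spectrum of $D_1$ is exactly $\{\tfrac12(\lambda+\lambda^{-1}):\lambda\in\sigma(\gamma(T))\}$. Using $E_2=S_N E_4$ I would rewrite $D_1-I=2E_2^T E_3=2E_4^T S_N E_3$, and by $\sigma(AB)=\sigma(BA)$ this has the same eigenvalues as $2 S_N E_3 E_4^T = 2S_N(S_D-S_N)^{-1}$; conjugation by $(S_D-S_N)^{1/2}$ converts the latter into the symmetric positive semi-definite matrix $2(S_D-S_N)^{-1/2}S_N(S_D-S_N)^{-1/2}$. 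Consequently every eigenvalue of $D_1$ is $\ge 1$, which forces $\lambda+\lambda^{-1}\ge 2$ for every real $\lambda\in\sigma(\gamma(T))$; together with $\lambda\neq 0$ this is equivalent to $\lambda>0$. The main obstacle in the plan is the first paragraph: correctly extracting both semi-definite inequalities $E_2^TE_4\ge 0$ and $E_1^TE_3\ge 0$ from the appropriate triple indices — the second one requires the detour $m^-([0,T];\Lambda_D)\le m^-([0,T];\Lambda_N)=0$ via Corollary \ref{cor1.1} to reduce to a half-period statement. Once these inputs are in hand, the remaining chain $S_D-S_N=(E_4E_3^T)^{-1}>0\Rightarrow D_1\ge I\Rightarrow \lambda>0$ is routine algebra.
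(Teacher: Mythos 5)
Your argument is correct in substance, but it takes a genuinely different route from the paper's. After obtaining reality of the spectrum from Theorem \ref{thm:stability of brake orbit} (as you do), the paper stays with the full-period graph: $m^-(\Lambda_N)=m^-(\Lambda_D)=0$ forces $i(Gr(\gamma(T)),\Lambda_N,\Lambda_D)=0$, hence $Gr(\gamma(T))\cap\Lambda_D=\{0\}$ (so $D_3$ is invertible) and $\mathfrak{Q}(Gr(\gamma(T)),\Lambda_N;\Lambda_D)\le 0$ (so $D_3D_1\ge 0$); the half period enters only through $D_3=2E_3^TE_1$ in \eqref{M2decom} and the vanishing of $i(Gr(\gamma(T/2)),\Lambda_N,\Lambda_D)$, giving $D_3>0$, then $D_1\ge 0$ by similarity and positivity of the eigenvalues via $\tfrac{\lambda+\lambda^{-1}}{2}\in\sigma(D_1)$. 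You instead run Theorem \ref{thm: morse index thm gene} on all four half-period problems, extract $E_2^TE_4\ge0$, $E_1^TE_3\ge0$ and invertibility of $E_3,E_4$, and then use the $S_N,S_D$ calculus: the identity $S_D-S_N=(E_4E_3^T)^{-1}$ (which I verified from $E_1^TE_4-E_3^TE_2=I_n$), positivity of $E_4E_3^T$ by continuity from $t\to0^+$, and $D_1=I_n+2E_2^TE_3$ together with $\sigma(AB)=\sigma(BA)$ to get $\sigma(D_1)\subset[1,\infty)$. This is sound and yields the formally stronger $D_1\ge I_n$; the price is that it needs more inputs (the two mixed half-period indices, invertibility of $E_4$, and the limiting argument near $t=0$), none of which the paper's shorter argument requires.

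Two of your citations need repair, though both are fixable with the tools at hand. First, the sandwich inequality \eqref{eq:inequality of morse index} does not give $m^-([0,T];\Lambda_N)=0$; it only bounds $m^-(\Lambda_N)$ by $m^-(\Lambda_D)+2n$. What does work for the two mixed half-period indices is the domain inclusion: for $\Lambda^n_N\oplus\Lambda^n_D$ and $\Lambda^n_D\oplus\Lambda^n_N$ the index form is the restriction of the half-period Neumann form to a subspace (the boundary matrix $A$ vanishes), so both indices are $\le m^-([0,T/2];\Lambda_N)=0$ by \eqref{eq:morse index decomposition}; alternatively invoke the minimizer hypothesis directly, as the paper does when asserting $m^-(\Lambda_N)=m^-(\Lambda_D)=0$. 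Second, the vanishing conjugate-point sums give invertibility of $E_3(t),E_4(t)$ only on the open interval $(0,T/2)$; invertibility at $t=T/2$ follows instead from the vanishing endpoint triple indices via \eqref{trip1}, since $i(\alpha,\Lambda^n_N,\Lambda^n_D)\ge\dim(\alpha\cap\Lambda^n_D)$, not from the sums themselves. With these repairs your proof goes through.
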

\begin{proof}
Recently, in \cite[Theorem 1.1]{Ur} the author proves an great result that for a minimizer brake orbit all eigenvalues of monodromy matrix $\gamma(T)$ are real and positive. In fact, this result can be easily proved by Theorem \ref{thm:difference of two morse index} and Theorem \ref{thm:stability of brake orbit}. Precisely, since the brake orbit is a minimizer, then the Morse index $k=0$ in Theorem \ref{thm:stability of brake orbit} which means all eigenvalues of $\gamma(T)$ are real. Moreover, $m^-(\Lambda_N)=m^-(\Lambda_D)=0$. The rest work is only to prove all $\lambda\in\sigma(\gamma(T))$ are positive. By theorem \ref{thm:difference of two morse index} and \eqref{eq:compute the triple index}, we have
\begin{equation}
\mathfrak{Q}(Gr(\gamma(T)),\Lambda_N;\Lambda_D)\leq0, \quad Gr(\gamma(T))\cap\Lambda_D=\{0\}.
\end{equation}
In fact, $Gr(\gamma(T))\cap\Lambda_D=\{0\}$ can derive $D_3$ is invertible. By \eqref{eq:form of Q on graph of monodromy matrix} and $\mathfrak{Q}(Gr(\gamma(T)),\Lambda_N;\Lambda_D)\leq0$ we have $D_3D_1\geq0$. By \eqref{eq:condition of monodromy matrix of brake orbit} and \eqref{M2decom},
we have $D_3=2E_3^TE_1$. By \eqref{eq:morse index decomposition}, we have $m^-([0,T/2];\Lambda_N)=m^-([0,T/2];\Lambda_D)=0$.
 By Theorem \ref{thm:difference of two morse index} again, we have $E_3^TE_1\geq0$. Consequently, $D_3\geq0$. Recall that $D_3$ is invertible, then $D_3>0$. Since $D_3^{-1/2}(D_3D_1)D_3^{-1/2}$ is similar to $D_1$, then $D_1\geq0$. By \eqref{eq:conditions of eigenvector of gamma(T)}, there hold for every $\lambda\in\sigma(\gamma(T))$ is equivalent to $\frac{\lambda+\lambda^{-1}}{2}\in\sigma(D_1)$, then there must hold all  $\lambda\in\sigma(\gamma(T))$ are positive.
 \end{proof}

\noindent {\bf Acknowledgements.}  The authors sincerely thank Professor Alessandro Portaluri for the discussions of index theory and the stability problem of brake orbits.

\end{document}